\DeclareFontFamily{U}{matha}{\hyphenchar\font45}
\DeclareFontShape{U}{matha}{m}{n}{
      <5> <6> <7> <8> <9> <10> gen * matha
      <10.95> matha10 <12> <14.4> <17.28> <20.74> <24.88> matha12
      }{}
\DeclareSymbolFont{matha}{U}{matha}{m}{n}
\DeclareFontFamily{U}{mathx}{\hyphenchar\font45}
\DeclareFontShape{U}{mathx}{m}{n}{
      <5> <6> <7> <8> <9> <10>
      <10.95> <12> <14.4> <17.28> <20.74> <24.88>
      mathx10
      }{}
\DeclareSymbolFont{mathx}{U}{mathx}{m}{n}
\DeclareMathSymbol{\obot}         {2}{matha}{"6B}
\DeclareMathSymbol{\bigobot}       {1}{mathx}{"CB}
\newtheorem{teo}{Theorem}[subsection]
\newtheorem{thm}[teo]{Theorem}
\newtheorem{thm*}[teo]{Theorem*}
\newtheorem{lem}[teo]{Lemma}
\newtheorem{cor}[teo]{Corollary}
\newtheorem{defn}[teo]{Definition}
\newtheorem{rmk}[teo]{Remark}
\newtheorem*{hypo*}{Hypothesis}
  \newtheorem{prop}[teo]{Proposition}
    \newtheorem {conj}[teo]{Conjecture}
\numberwithin{equation}{section}
    \newcommand{\BC}{{\mathbb {C}}} 
     \newcommand{\BF}{{\mathbb {F}}}
     \newcommand{\BP}{{\mathbb {P}}}
    \newcommand{\BQ}{{\mathbb {Q}}}
     \newcommand{\BZ}{{\mathbb {Z}}}
    \newcommand{\wt}{\widetilde}
    \newcommand{\pair}[1]{\langle {#1} \rangle}
 \newcommand{\lb}{\left(} \newcommand{\rb}{\right)}
    \newcommand{\Aut}{{\mathrm{Aut}}}
    \newcommand{\Ch}{{\mathrm{Ch}}}
    \newcommand{\cl}{{\mathrm{cl}}}
    \newcommand{\End}{{\mathrm{End}}}
     \newcommand{\GL}{{\mathrm{GL}}}
    \newcommand{\Hom}{{\mathrm{Hom}}}
    \newcommand{\Ind}{{\mathrm{Ind}}}
        \newcommand{\DC}{{\mathrm{DC}}}
    \newcommand{\PGL}{{\mathrm{PGL}}}
    \newcommand{\PSL}{{\mathrm{PSL}}}
    \renewcommand{\mod}{\mathrm{mod}\ }
    \newcommand{\sgn}{{\mathrm{sgn}}}
    \newcommand{\Tr}{{\mathrm{Tr}}}
\newcommand\supervisor[1]{\def\@supervisor{#1}}
\newcounter{elno}
 \newenvironment{altenumerate}
   {\begin{list}
      {(\theenumi) }
      {\usecounter{enumi}
       \setlength{\labelwidth}{0pt}
       \setlength{\labelsep}{0pt}
       \setlength{\leftmargin}{0pt}
       \setlength{\itemsep}{\the\smallskipamount}
       \renewcommand{\theenumi}{\roman{enumi}}
      }}
   {\end{list}}
\renewcommand{\cong}{\simeq}
 \author{Congling Qiu,\ Wei Zhang}
\begin{document} 
  \title{Vanishing results in Chow groups for the modified diagonal cycles}
\begin{abstract}

We prove a sufficient condition for the vanishing of the modified diagonal cycle in the Chow group (with $\BQ$-coefficients) of the triple product of a curve over $\BC$. 
 We exhibit infinitely many non-hyperelliptic curves, including the Fricke--Macbeath curve, the Bring curve, and two 1-dimensional families parameterized by certain Hurwitz spaces, for which our condition is satisfied.
 \end{abstract}
 \maketitle 
 \tableofcontents 
  
 \section{Introduction}

 \subsection{The modified diagonal cycle in the Chow group}\label{vanish}
  For a smooth projective  variety $X$ over a  field $F$, let $\Ch^i(X)$ be the Chow group  of $X$ (the group of  codimension $i$ cycles with   $\BQ$-coefficients, modulo rational equivalence).  The diagonal cycle on $X\times X $ plays an important role in many aspects. For the multi-folded product $X^n=X\times\cdots\times X$ ($n$-times), the diagonal cycle $\Delta$ (sometimes called the small diagonal when $n\geq 3$) has also been studied by many authors. Of particular interest is the modified diagonal cycle in the case $n=3$ and $\dim X=1$, also called the Gross--Schoen cycle \cite{GS,Zha10}. This will be the main focus of this paper, and we refer to \cite{BV,OG,Vo} for some other classes of $X$ and more general $n$. For example, by \cite[Proposition 2.4]{OG},  the
vanishing of the  modified diagonal cycle in the case $n=3$ implies the vanishing for $n\geq 3$.
  
  Throughout this paper, a ``curve" always means a ``smooth projective curve" over a field $F$. Let $X$ be a curve over $F$. For any $e\in \Ch^1(X)$ with $\deg e=1$,   there is the modified diagonal cycle   $[\Delta_{e}]\in \Ch^2(X^3)$ defined in \cite{GS}. 
  If $e=[p]$ the class of an $F$-point $p$, we set 
\begin{alignat*}{3}\Delta_{12}&= \{(x,x,p):x\in X\},\quad \Delta_{23}&&= \{(p,x,x):x\in X\}, \quad
\Delta_{31}& &= \{(x,p,x):x\in X\}, \\
\Delta_{1}&=  \{(x,p,p):x\in X\},\quad \ 
\Delta_{2}&&=   \{(p,x,p):x\in X\},\quad  \
\Delta_{3}& &= \{(p,p,x):x\in X\}.
\end{alignat*}
Then $[\Delta_{e}]$ is  the   class  in $\Ch^2(X^3)$ of the algebraic cycle
\begin{align}\label{eq:mod diag}
\Delta-\Delta_{12}-\Delta_{23}-\Delta_{31}+
\Delta_{1}+\Delta_{2}+
\Delta_{3}.
\end{align}
 In general,  we can define $[\Delta_{e}]$ using \eqref{GScycle} (a formula similar  to \eqref{eq:mod diag} for $[\Delta_e]$   is given 
   in  \cite[(1.1)]{Zha10}).  

The modified diagonal $[\Delta_{e}]$ has trivial cohomology class, by Proposition \ref {2dneq n'}. 
When $X$ is a hyperelliptic curve, Gross and Schoen showed that  $[\Delta_e]$ vanishes in $\Ch^2(X^3)$ if $e=[p]$ for any Weierstrass point $p$. 
Below, we always assume  that a curve has genus at least $2$. By Proposition \ref{prop:nonvan},  the class  $[\Delta_e]$ does not vanish unless $e$ is equal to
\begin{align}\label{eq:xi}
\xi=\xi_X:=\frac{1}{\deg c_1(X)} c_1(X)\in \Ch^1(X).
\end{align}

The modified diagonal cycle is closely related to the Ceresa cycle \cite{C}, i.e., $[X]-(-1)^\ast[X]\in \Ch^{g-1}(J_X)$, where $J_X$
 is the Jacobian variety of $X$ and we embed $X$ into $J_X$ by $x\mapsto x-\xi$. The vanishing of  $[\Delta_{\xi}]\in \Ch^2(X^3)$ is equivalent to the vanishing of the Ceresa cycle in $ \Ch^{g-1}(J_X)$ (see  \cite[Theorem 1.5.5]{Zha10}). Ceresa \cite{C} proved that the class $[X]-(-1)^\ast[X]\in \Ch^{g-1}(J_X)$, hence $[\Delta_{\xi}]\in \Ch^2(X^3)$, does not vanish  for a {\em general} curve $X$ over $\BC$.  We recall another related ``non-vanishing result": by the ``Northcott property" of S. Zhang \cite[Theorem 1.3.5]{Zha10}, it is more often for the class $[\Delta_{\xi}]\in \Ch^2(X^3)$ to be non-vanishing in a proper (non-isotrivial) family of curves defined over a number field.

However,  it was not known whether  there exists any non-hyperelliptic curve $X$ such that $[\Delta_{\xi}]\in \Ch^2(X^3)$ vanishes. 
The analogous question with the $\ell$-adic intermediate Jacobian, the complex  intermediate Jacobian, and  the Griffiths group (Chow group modulo algebraic equivalence) respectively in place of the Chow group,  has been recently answered affirmatively by Bisogno--Li--Litt--Srinivasan \cite{BLLS},  Lilienfeldt \cite{Lil}, and Beauville--Schoen \cite{BS} respectively, using different curves. In this paper we will provide   many examples, including  two 1-dimensional families, of non-hyperelliptic $X$ over $\BC$ with  $[\Delta_{\xi}]=0\in \Ch^2(X^3)$. In fact, all  of them are defined over number fields (some are even over $\BQ$). 
Consequently, we also obtain vanishing results for the Ceresa cycle $[X]-(-1)^*[X]$ in the Chow group of the Jacobian variety $J_X$.
 
 Before stating our main result, let us point out a curious consequence of the vanishing of $[\Delta_{\xi}]\in \Ch^2(X^3)$. Let $X$ be a curve defined over a number field.   By \cite[Corollary 1.3.2]{Zha10}, the vanishing of $[\Delta_\xi]$ in the Chow group gives a new proof of the positivity of the self-intersection number of the admissible dualizing sheaf: $\omega_a^2>0$ (see the notation in  {\it loc. cit.}), which was a deep theorem due to Ullmo \cite{U} and S. Zhang \cite{Z-98} independently. In particular,  by S. Zhang's theorem of successive minima (see \cite[\S2]{Z-ICM}), this positivity in turn implies the Bogomolov conjecture for $X$.

\subsection{A vanishing theorem}

Our result relies on  the following sufficient condition for the vanishing of $[\Delta_\xi]$.  From now on we work with the ground field $F=\BC$, except in \S\ref{sec:Finite fields}. Let $H^1(X)$ denote the first Betti cohomology $H^1(X,\BC)$.  Let $G$ be a finite group acting on $X$ by automorphisms. Then there is the induced action of $G$ on $H^1(X)$. 
On $H^1(X)^{\otimes 3}$,   
there is  the induced diagonal $G$-action.
    We have the following theorem, proved in \ref{ss:proof}. 
     \begin{thm} \label{th:van} If $(H^1(X)^{\otimes 3})^{G}=0$, then  the class $[\Delta_\xi]$ vanishes in $\Ch^2(X^3)$.
      \end{thm}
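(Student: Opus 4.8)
The plan is to reduce the vanishing of $[\Delta_\xi] \in \Ch^2(X^3)$ to a statement purely about the action of $G$ on cohomology, by passing through the quotient $X/G$ and exploiting the rigidity of the Ceresa/modified-diagonal class. First I would record the following consequence of the hypothesis: the diagonal $G$-action on $H^1(X)^{\otimes 3}$ having no invariants means, in particular, that the Gross--Schoen cycle has zero image under every $G$-equivariant cohomological realization, but we need more than a cohomological statement since we are working modulo rational equivalence. The key point is that $[\Delta_\xi]$ is, by construction in \cite{GS} (via \eqref{GScycle}), a \emph{canonical} cycle attached to $X$ alone, hence it is fixed by the induced action of $\Aut(X) \supseteq G$ on $\Ch^2(X^3)$: since $\xi = \xi_X$ is the canonically normalized class $\frac{1}{\deg c_1(X)} c_1(X)$, it is $G$-invariant in $\Ch^1(X)$, and the diagonal $G$-action on $X^3$ carries $[\Delta_\xi]$ to itself. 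Therefore $[\Delta_\xi]$ lies in the $G$-invariant part $\Ch^2(X^3)^G$.

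Next I would invoke a Lieberman/Künneth-type decomposition of the Chow group of the triple product adapted to the $G$-action. Over $\BC$, the relevant structural input is that for a curve $X$ the ``interesting'' part of $\Ch^2(X^3)$ — the part where $[\Delta_\xi]$ can live, after subtracting the obvious pieces coming from lower-dimensional products as in \eqref{eq:mod diag} — is governed by the ``$H^1 \otimes H^1 \otimes H^1$ motive'' of $X$, i.e. by $\mathfrak{h}^1(X)^{\otimes 3}$ in the category of Chow motives. Concretely, one expects a $G$-equivariant projector $\pi$ on the motive of $X^3$ such that $[\Delta_\xi] = \pi_*[\Delta_\xi]$, and such that $\pi \mathfrak{h}(X^3)$ has Betti realization a direct summand of $H^1(X)^{\otimes 3}$ (this uses the Chow--Künneth decomposition for curves, which is classical and $\Aut(X)$-equivariant). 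Then, taking $G$-invariants: $[\Delta_\xi] \in (\pi\mathfrak{h}(X^3))(X^3)^G$, and I would argue that this group is controlled by $(H^1(X)^{\otimes 3})^G$ via an Abel--Jacobi / cycle-class argument together with the theorem of the fixed part, so that vanishing of the latter forces vanishing of the former.

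Alternatively, and perhaps more robustly, I would carry out an averaging/transfer argument directly on cycles: let $\pi\colon X \to Y := X/G$ be the quotient map (a finite morphism of curves), inducing $\pi^3\colon X^3 \to Y^3$. The hypothesis $(H^1(X)^{\otimes 3})^G = 0$ says that $H^1(Y)^{\otimes 3}$, being a summand of $(H^1(X)^{\otimes 3})^G$... — more precisely, one shows the $G$-invariants of $H^1(X)^{\otimes 3}$ vanish, which by the Künneth formula is stronger than $H^1(Y)^{\otimes 3} = 0$; it also kills all the ``mixed'' pieces $H^1(X) \otimes H^1(X) \otimes H^1(X)$ that could otherwise contribute. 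I would then use the projection formula: $(\pi^3)_* (\pi^3)^* = \deg(\pi^3) \cdot \mathrm{id}$ on $\Ch^*(Y^3)_\BQ$, and conversely $(\pi^3)^*(\pi^3)_*$ is the sum over $G^3$ of the graph correspondences. Since $\xi_X = \pi^* \xi_Y$ up to the degree normalization (because $c_1(X)$ and $\pi^* c_1(Y)$ differ by the ramification divisor, which I must handle carefully — this is where an honest computation enters), one relates $[\Delta_{\xi_X}]$ to the pullback of a modified-diagonal-type cycle on $Y^3$ plus correction terms supported on the ramification locus; the correction terms live in lower-dimensional strata and are killed after subtracting the $\Delta_{ij}, \Delta_i$ pieces, exactly as in the definition \eqref{eq:mod diag}.

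The main obstacle, as I see it, is the last point: passing from a vanishing statement about $G$-invariant \emph{cohomology} to a vanishing statement about $G$-invariant \emph{Chow classes} is not formal — in general $\Ch^2$ can be huge even when the relevant cohomology is zero (indeed that is precisely the subtlety that makes the non-hyperelliptic case hard, per the discussion after \eqref{eq:xi}). The resolution must use the special nature of $[\Delta_\xi]$: it is not an arbitrary class but the Gross--Schoen cycle, which by \cite[Theorem 1.5.5]{Zha10} is equivalent to the Ceresa cycle, and the Ceresa cycle sits in the ``$\mathfrak{h}^1$-isotypic'' part of $\Ch(J_X)$ where Künneth-type motivic decompositions do pin it down cohomologically — this is the content one needs, and I expect the proof in \S\ref{ss:proof} to extract exactly this: that the image of $[\Delta_\xi]$ under the relevant motivic projector is detected by its class in $(H^1(X)^{\otimes 3})^G$ (or a closely related $G$-equivariant realization), so that the hypothesis forces $[\Delta_\xi] = 0$. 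I would therefore structure the writeup as: (1) $G$-invariance of $[\Delta_\xi]$; (2) motivic/projector reduction isolating the $\mathfrak{h}^1(X)^{\otimes 3}$-part; (3) the decisive injectivity statement on that part, where the hypothesis is applied.
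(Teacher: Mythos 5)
There is a genuine gap, and you have in fact located it yourself: the passage from vanishing of $G$-invariants in cohomology to vanishing of a Chow class is the whole content of the theorem, and neither of your two proposed resolutions supplies the missing mechanism. Your first route asks for the $\mathfrak{h}^1(X)^{\otimes 3}$-part of $\Ch^2(X^3)$ to be ``detected'' by $(H^1(X)^{\otimes 3})^G$ via a cycle-class or Abel--Jacobi argument; but $[\Delta_\xi]$ is cohomologically trivial to begin with (Proposition \ref{2dneq n'}), and no injectivity of $\Ch^2(X^3)$ into any cohomological realization is available (over $\BC$ this group is typically enormous even on the $H^1$-isotypic part). Your second route, via $Y=X/G$ and the transfer $(\pi^3)^*(\pi^3)_*$, fails for a structural reason: that composite is the sum over $G^3$ of graph correspondences, whereas the small diagonal $\Delta$ is invariant only under the \emph{diagonal} copy of $G$ in $G^3$. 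Pushing to $Y^3$ and pulling back therefore only exploits $(H^1(X)^{\otimes 3})^{G^3}=(H^1(X)^G)^{\otimes 3}=0$, i.e.\ $g(Y)=0$, and the vanishing of the $G^3$-orbit sum of $[\Delta_\xi]$ does not give the vanishing of $[\Delta_\xi]$ itself. The hypothesis about diagonal invariants is strictly stronger than $g(Y)=0$ and cannot be accessed this way.

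The idea you are missing is to apply the injectivity of the cycle class map not to cycles on $X^3$ but to \emph{correspondences}: one forms $z=\frac{1}{|G|}\sum_{g\in G}(\Gamma_g\circ\delta_\xi)^{\otimes 3}$, an honest algebraic cycle in $\DC^1(X^2)^{\otimes 3}\subset \Ch^3(X^3\times X^3)$, where $\delta_\xi$ is the projector \eqref{delta} and $\DC^1(X^2)$ is the space of divisorial correspondences normalized by $z_*\xi=z^*\xi=0$. On \emph{that} space the cycle class map is injective (Lemma \ref{clinj}), because $\DC^1(X^2)\simeq\End^0(J_X)$ embeds into $\End(H^1(X))$ --- divisor classes on a product of curves, unlike codimension-two cycles on $X^3$, are controlled by cohomology. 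The hypothesis $(H^1(X)^{\otimes 3})^{G}=0$ forces $z$ to act by zero on $H^1(X)^{\otimes 3}$, hence $z=0$ as an algebraic cycle, hence $z_*[\Delta]=(\delta_\xi^{\otimes 3})_*[\Delta]=0$ by diagonal $G$-invariance of $[\Delta]$. One final step you would still need is to compare the Chow--K\"unneth modified diagonal $(\delta_\xi^{\otimes 3})_*[\Delta]$ with the Gross--Schoen cycle $[\Delta_\xi]$: they differ by pullbacks of the Faber--Pandharipande cycle $z_\xi$ as in \eqref{diffcycle}, and an intersection computation (Proposition \ref{prop:nonvan}) shows that the vanishing of either forces $z_\xi=0$ and hence the vanishing of the other.
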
    
 
 Some remarks are in order.
 A necessary condition for $(H^1(X)^{\otimes 3})^{G}=0$ is $H^1(X)^{G}=0$. Hence the quotient  curve $X/G$ has genus zero.

A special case of Theorem \ref{th:van}  is when $X$ is hyperelliptic and $G=\BZ/2\BZ$ is generated by an involution that gives rise to a degree two map $X\to \BP^1$. Then $H^1(X)^G=0$ and hence $G$ acts on $H^1(X)$ by the unique nontrivial character. It follows that $(H^1(X)^{\otimes 3})^{G}=0$. 
This gives a new proof of the aforementioned theorem of Gross and Schoen.

If $X$ is defined over a number field,  Theorem \ref{th:van} is also predicted by the conjectural   injectivity of the Abel--Jacobi map (due to Beilinson and Bloch).

To the best of our knowledge, the only other known result  (besides the trivial ones for varieties such as $\BP^2$) on the vanishing (in Chow group) of the modified diagonal cycle on the triple product of a variety is 
due to Beauville and Voisin \cite{BV}, who showed that when $X$ is a K3 surface, $e=\frac{1}{\deg c_2(X)}c_2(X)\in \Ch^2(X)$ (note that $\deg c_2(X)=24$) is the class of  an $F$-point, and  the class  of  $[\Delta_e]$ similarly defined  by \eqref{eq:mod diag}  vanishes in $\Ch^4(X^3)$.

\subsection{Hurwitz curves}\label{Fricke--Macbeath curve}

  A curve over $\BC$ of genus $g$ is called a Hurwitz curve if its automorphism group achieves the maximal possible order $84(g-1)$. 
There is a unique (up to isomorphism) Hurwitz curve $X$ of genus 7 over $\BC$, known as the Fricke--Macbeath curve \cite{Mac1}. Its automorphism group is isomorphic to  $ \PGL_2({\BF_8})$.
 Shimura  \cite{Shi} identified   it  as a Shimura curve.

     By a theorem of   Bisogno, Li, Litt,  Srinivasan    \cite{BLLS} (see also 
        Gross \cite{Gro}), we have $(H^1(X)^ {\otimes 3})^{\Aut(X)}=0$ for the  Fricke--Macbeath curve $X$.         We thus obtain
     \begin{thm} \label{th:FM} Let $X$ be the Fricke--Macbeath curve.
Then the class $[\Delta_\xi]$ vanishes in $\Ch^2(X^3)$. 

      \end{thm}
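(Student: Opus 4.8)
The plan is to deduce the statement directly from Theorem \ref{th:van}, applied with $G = \Aut(X) \cong \PGL_2(\BF_8)$; note that for $g = 7$ one has $84(g-1) = 504 = |\PGL_2(\BF_8)|$, so $\Aut(X)$ indeed realizes the maximal order. The only hypothesis of Theorem \ref{th:van} to be checked is that $(H^1(X)^{\otimes 3})^{G} = 0$, and this is precisely the theorem of Bisogno--Li--Litt--Srinivasan \cite{BLLS} (see also Gross \cite{Gro}) recorded above. Granting it, Theorem \ref{th:van} immediately gives $[\Delta_\xi] = 0$ in $\Ch^2(X^3)$, and there is nothing further to do; so the entire content of the theorem is the group-theoretic input.

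For completeness, here is how the vanishing $(H^1(X)^{\otimes 3})^{G} = 0$ is verified. Since $8$ is a power of $2$, squaring is a bijection of $\BF_8$, hence $\PGL_2(\BF_8) = \PSL_2(\BF_8)$; this is a simple group of order $504$, and its irreducible complex representations have dimensions $1, 7, 7, 7, 7, 8, 9, 9, 9$. By the Hodge decomposition there is an isomorphism of $G$-modules $H^1(X) \cong W \oplus \overline{W}$, where $W = H^0(X, \Omega^1_X)$ and $\overline{W} \cong W^{*}$ is its complex conjugate; for the Fricke--Macbeath curve, $W$ is an irreducible $G$-module, one of the four of dimension $7$. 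Expanding $(W \oplus \overline{W})^{\otimes 3}$ as a $G$-module and using $\chi_{\overline{W}} = \overline{\chi_W}$, a short character computation gives
\[
\dim\bigl(H^1(X)^{\otimes 3}\bigr)^{G} \;=\; 6\,\langle \chi_W^{2},\, \chi_W\rangle \;+\; 2\,\langle \chi_W^{2},\, \overline{\chi_W}\rangle ,
\]
where $\chi_W$ is the character of $W$ and $\langle\,,\,\rangle$ is the standard Hermitian pairing of class functions; in other words, the left-hand side is $6$ times the multiplicity of $W$ in $W \otimes W$ plus $2$ times the multiplicity of $\overline{W}$ in $W \otimes W$. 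It therefore suffices to show that neither $W$ nor $\overline{W}$ occurs in the $49$-dimensional module $W \otimes W$, and this is what the character computation of \cite{BLLS, Gro} establishes: $W \otimes W$ is a direct sum of the trivial representation, the three $9$-dimensional irreducible representations, and three $7$-dimensional irreducible representations, none of which is $W$ or $\overline{W}$.

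The only substantive step is thus this last character-theoretic computation — identifying $W = H^0(X, \Omega^1_X)$ as a $\PGL_2(\BF_8)$-module and decomposing its tensor square; everything else is formal, and the computation is supplied by \cite{BLLS} and \cite{Gro}. As a consistency check, the necessary condition noted after Theorem \ref{th:van} holds here: $H^1(X)^{G} = W^{G} \oplus (\overline{W})^{G} = 0$ since $W$ is a nontrivial irreducible representation, in agreement with $X / \Aut(X) \cong \BP^1$ (the Hurwitz quotient, branched over three points of orders $2, 3, 7$).
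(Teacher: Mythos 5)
Your proposal is correct and follows the paper's own route exactly: Theorem \ref{th:FM} is deduced by feeding the Bisogno--Li--Litt--Srinivasan/Gross vanishing $(H^1(X)^{\otimes 3})^{\Aut(X)}=0$ into Theorem \ref{th:van}, and that is all the paper does as well. Your ``for completeness'' verification (writing $H^1(X)=W\oplus\overline W$ and checking that neither $W$ nor $\overline W$ occurs in $W\otimes W$) is an accurate account of the group-theoretic input and agrees with the paper's own independent check in \S\ref{Fricke--Macbeath curve1}, which phrases the same character computation as $H^1(X)=2\pi_{\chi_3}$ combined with Corollary \ref{cor:tri}.
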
    
       We will also provide an independent proof of 
       $(H^1(X)^ {\otimes 3})^{\Aut(X)}=0$ for the  Fricke--Macbeath curve $X$
        in \S\ref{Fricke--Macbeath curve1}. This will also help us find another example of similar sort, the Bring curve in \S\ref{ss:Bring}, which has the automorphism group $\PGL_2(\BF_5)$, the largest possible automorphism group for genus $4$ curves.

 We make the following conjecture. 
\begin{conj}\label{conj H}
The cycle $[\Delta_\xi]$ vanishes in  $\Ch^2(X^3)$ for only finitely many Hurwitz curves $X$.
\end{conj}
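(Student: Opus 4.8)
Since this is stated as a conjecture, I only describe a strategy and point to the essential difficulty. By \cite[Theorem 1.5.5]{Zha10} the vanishing of $[\Delta_\xi]$ is equivalent to the vanishing of the Ceresa cycle $[X]-(-1)^*[X]$ in $\Ch^{g-1}(J_X)$, so Conjecture \ref{conj H} asserts that the Ceresa cycle of a Hurwitz curve is nonzero for all but finitely many such curves. The plan is to split this into two halves: first check that the sufficient criterion of Theorem \ref{th:van} is available for only finitely many Hurwitz curves --- otherwise the conjecture is plainly false --- and then, which is the real content, prove that the Ceresa cycle is nonetheless non-torsion for all the remaining curves.

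For the first half, let $X$ be a Hurwitz curve with $G=\Aut(X)$, $|G|=84(g-1)$, so that $X\to X/G\cong\BP^1$ is branched over three points with ramification indices $2,3,7$ and $G$ is perfect. For $1\ne h\in G$ the Lefschetz fixed-point formula gives $\tr\bigl(h\mid H^1(X)\bigr)=2-|\Fix(h)|$, with $|\Fix(h)|=|N_G(\langle h\rangle)|/e$ when $\langle h\rangle$ is $G$-conjugate to one of the three point-stabilizers (cyclic of order $e\in\{2,3,7\}$) and $|\Fix(h)|=0$ otherwise; hence
\[
\dim\bigl(H^1(X)^{\otimes 3}\bigr)^G=\frac{1}{|G|}\Bigl((2g)^3+\sum_{h\ne 1}\bigl(2-|\Fix(h)|\bigr)^3\Bigr).
\]
Here the term $(2g)^3/|G|\sim |G|^2/42^3$ dominates the remaining sum, which is $O\bigl(|G|\cdot\max_i|N_G(\langle a_i\rangle)|^2\bigr)$, once one knows that the normalizers of the point-stabilizers grow like $o(|G|)$ along Hurwitz groups --- a group-theoretic fact I would need to establish (when $G$ is simple it follows from Brauer--Fowler-type bounds). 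Granting it, $\dim(H^1(X)^{\otimes 3})^G\to\infty$, so Theorem \ref{th:van} applies to only finitely many Hurwitz curves and the conjecture genuinely demands a non-vanishing argument.

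For the second half I would try to show that some Abel--Jacobi invariant of the Ceresa cycle is nonzero. Three avenues suggest themselves. \emph{Arithmetic:} every Hurwitz curve is defined over a number field, so $[\Delta_\xi]=0$ would force the Beilinson--Bloch height $\langle\Delta_\xi,\Delta_\xi\rangle$ to vanish; by S. Zhang's formula \cite{Zha10} expressing this height through $\omega_a^2$ and the non-negative local $\varphi$-invariants, one would look for a contradiction with lower bounds on those quantities growing with $g$ --- bearing in mind that no \emph{unconditional} contradiction is possible, since $[\Delta_\xi]=0$ for the Fricke--Macbeath curve. \emph{$\ell$-adic:} reduce to non-vanishing of the $\ell$-adic Abel--Jacobi class in $H^1\bigl(G_K,\wedge^3 H^1_{\et}(X_{\ol K},\BQ_\ell)(2)/H^1_{\et}\bigr)$, which is controlled by the Galois action on the pro-unipotent fundamental group and should be nonzero once the image of $G_K$ on $H^1_{\et}(X)$ --- equivalently, on each isotypic factor of $J_X$ for the $G$-action --- is as large as its endomorphism algebra permits. \emph{Hodge-theoretic:} show the complex Abel--Jacobi image, or an associated normal-function or iterated-integral invariant, is nonzero, exploiting the explicit uniformization of a Hurwitz curve by a finite-index subgroup of the $(2,3,7)$-triangle group, in the spirit of Hain--Pulte and of \cite{BLLS}.

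The decisive obstacle is the second half: proving non-vanishing of Ceresa cycles --- or even of their $\ell$-adic Abel--Jacobi images --- uniformly over an explicit infinite family of curves is beyond what is presently known, so I would regard Conjecture \ref{conj H} as a concrete instance of that open problem rather than a consequence of the methods developed here.
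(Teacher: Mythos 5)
This statement is a conjecture; the paper offers no proof of it, only supporting evidence, and your proposal is right to treat it as open rather than manufacture an argument. What the paper does prove as evidence is Theorem \ref{thm:Hur PSL}: among Hurwitz curves with $\Aut(X)\simeq\PSL_2(\BF_q)$, the Fricke--Macbeath curve is the only one with $(H^1(X)^{\otimes 3})^{G}=0$. The proof there is precisely the first half of your strategy, specialized: it writes $\dim(H^1(X)^{\otimes 3})^G=\frac{1}{|G|}\sum_{g}\chi(g)^3$, notes $\chi(1)=2+|G|/42$ so the identity term is $\sim|G|^2/42^3$, and bounds the remaining terms using the explicit conjugacy data of $\PSL_2(\BF_q)$ (class sizes at most $2|G|/(q-1)$, character values bounded below by $-q$), concluding positivity for $q\geq 43$ and checking smaller $q$ by hand. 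Your version replaces the explicit $\PSL_2(\BF_q)$ data by the general bound $O(|G|\cdot\max_i|N_G(\langle a_i\rangle)|^2)$ on the error, which is the correct shape but leaves open the group-theoretic input that the normalizers of the $(2,3,7)$ point-stabilizers are small compared to $|G|$ for an arbitrary Hurwitz group; the paper sidesteps this by restricting to one family. So on the first half you and the paper are doing the same computation, yours more general but with an unverified hypothesis.

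Two cautions on framing. First, even a complete version of your first half would only show that the criterion of Theorem \ref{th:van} fails for all but finitely many Hurwitz curves; since the paper itself exhibits curves with $[\Delta_\xi]=0$ but $(H^1(X)^{\otimes 3})^{\Aut(X)}\neq 0$ (Corollary \ref{cor:qt FM}), this does not by itself constitute evidence for the conjecture so much as evidence that the paper's method cannot disprove it — you do acknowledge this. Second, your formula $|\Fix(h)|=|N_G(\langle h\rangle)|/e$ implicitly assumes a single conjugacy class of cyclic subgroups of each order $e\in\{2,3,7\}$ containing $h$; this holds for $\PSL_2(\BF_q)$ but needs care for general Hurwitz groups, where $h$ may fix points lying over more than one branch point. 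The second half of your proposal (actual non-vanishing of Ceresa classes over an infinite family) is, as you say, beyond current technology, and the paper makes no claim there.
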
 
Note that Hurwitz curves are never hyperelliptic, cf. Lemma \ref{cor:Hur}. The authors would not be surprised if the Fricke--Macbeath curve turns out to be the only Hurwitz curve with vanishing $[\Delta_\xi]$ in the Chow group.

As an evidence, we prove that, among those Hurwitz curves with $G=\Aut(X)\simeq \PSL_2(\BF_q)$,  the Fricke--Macbeath curve is the only one satisfying the condition $(H^1(X)^ {\otimes 3})^{G}=0$, see Theorem \ref{thm:Hur PSL}.

\subsection{More examples}
In addition to the Fricke--Macbeath curve, we have also found other examples satisfying  $(H^1(X)^ {\otimes 3})^{G}=0$ for every genus $g\in \{3,4,5\}$. We have also found an 1-dimensional family in genus $4$ and $5$ respectively. See \S\ref{s:ex}. We use the data obtained in \cite{MSSV}.

 Based on our computation, we would like to make the following two conjectures.  Let $\mathcal{M}_g$ be the moduli stack  of genus $g$ curves (over $\BC$).

 \begin{conj}\label{conj g}
  \begin{altenumerate}

 \item[(a)] For $g> 2$, there exists a genus $g$ non-hyperelliptic curve $X$ with  $[\Delta_\xi]=0$ in $\Ch^2(X^3)$.
 \item[(b)] For any integer $d$, there exist an integer $g$ and a connected smooth family of  genus $g$ curves $f:\mathcal{X}\to \mathcal{B}$ such that under the map $\mathcal{B}\to \mathcal{M}_g $ induced by $f$, the image of $\mathcal{B}$ has dimension at least $d$, and such that, for every $b\in \mathcal{B}(\BC)$,  the fiber $X_b$ of $f$ over $b$ is non-hyperelliptic and has $[\Delta_\xi]=0$ in $\Ch^2(X_b^3)$.
 \item[(c)] The image of $\mathcal{B}$ in $ \mathcal{M}_g $ in part (b) has dimension at most $2g-1$ (the dimension of the hyperelliptic loci in  $\mathcal{M}_g$). 

 \end{altenumerate}

 \end{conj}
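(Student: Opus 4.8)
\emph{Reduction to a character identity.} The plan is to route all three parts through Theorem \ref{th:van}, which reduces the vanishing of $[\Delta_\xi]$ to the condition $(H^1(X)^{\otimes3})^G=0$ for a finite group $G$ acting on $X$. Writing $V=H^1(X)$ and using $\dim(W)^G=\tfrac1{|G|}\sum_{g}\chi_W(g)$ with $W=V^{\otimes3}$, this is exactly the identity
\begin{align*}
\sum_{g\in G}\chi_V(g)^3=0,\qquad \chi_V(1)=\dim_{\BC}V=2g,
\end{align*}
where $\chi_V$ is the (real-valued) character of $V$. By the Chevalley--Weil formula, $V$ as a $G$-module is determined by the genus of $X/G$ and the local monodromy types of $X\to X/G$ --- discrete data. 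I would first record the resulting principle that $(V^{\otimes3})^G$ is \emph{constant along any connected family of $G$-covers of fixed topological type}; this is what produces the families in (b) and (c) essentially for free, and isolates the genuine difficulty, namely forcing the cancellation $\sum_g\chi_V(g)^3=0$ against the large positive term $\chi_V(1)^3=8g^3$.

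\emph{Part (a).} The elementary-abelian case is instructive but ultimately too rigid: for $G=(\BZ/2\BZ)^k$ the identity holds iff the support $S=\{\chi:V_\chi\neq0\}$ is sum-free, while non-hyperellipticity requires in addition that $S$ meet every hyperplane through the origin (so that no involution in $G$ acts as $-1$ on $V$); such $S$ exist only for $k\ge4$, and they force many eigenspaces $V_\chi$ to vanish, which severely constrains the admissible branch data and hence the attainable genera. One is therefore led, as in the Fricke--Macbeath and Bring curves, to non-abelian $G$ with $V$ close to irreducible, where $\chi_V$ assumes enough negative values to cancel $8g^3$. The strategy for (a) is to run through infinite families of finite groups --- $\PSL_2(\BF_q)$, $\PGL_2(\BF_q)$, symmetric and alternating groups, or monodromy groups of $(p,q,r)$-triangle and polygonal covers --- computing $\sum_g\chi_V(g)^3$ from Chevalley--Weil as a function of the branch datum and seeking a zero in every genus $g>2$. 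The main obstacle is the fragility of this cancellation: by Theorem \ref{thm:Hur PSL} even the richest single series (Hurwitz $\PSL_2(\BF_q)$-curves) yields only Fricke--Macbeath, so a construction valid for all $g>2$ must interleave several families or vary ramification within one, and I do not expect a closed-form uniform recipe.

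\emph{Part (b).} Granting (a)-type input, (b) becomes largely formal via the constancy principle. Fixing a group $G$ and a genus-$0$ branch datum with $r$ branch points for which the identity holds, the associated Hurwitz space $\cB$ carries a connected family $f:\cX\to\cB$ of $G$-curves, every geometric fiber of which has $[\Delta_\xi]=0$ by Theorem \ref{th:van}; moreover $\dim\cB=r-3$ and the induced map $\cB\to\cM_g$ is generically finite, so its image has dimension $r-3$. Letting $r\to\infty$ makes this dimension exceed any given $d$, giving (b). The cost is exactly the difficulty of (a) in a stronger form: increasing $r$ increases $\dim V=2g$ and the positive term $8g^3$, so the added ramification must come in balanced packets supplying the compensating negative character values, and one must keep the generic fiber non-hyperelliptic throughout the tower.

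\emph{Part (c) --- the main obstacle.} Part (c) is an \emph{upper} bound: it predicts $\dim\mathcal Z_g\le 2g-1$ for the locus $\mathcal Z_g\subseteq\cM_g$ of curves with $[\Delta_\xi]=0$, the same dimension as the hyperelliptic locus $\cH_g\subseteq\mathcal Z_g$. No construction can prove an upper bound, and the natural tool is the infinitesimal Hodge theory of the Ceresa cycle, to which $[\Delta_\xi]$ is equivalent. Over $\cM_g$ the Ceresa cycle defines a normal function valued in a variation of Hodge structure extracted from $H^1(X)^{\otimes3}$; vanishing of $[\Delta_\xi]$ in the Chow group forces the vanishing of this normal function together with all of its higher invariants under the Gauss--Manin connection. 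To bound $\mathcal Z_g$ one would show these invariants impose at least $(3g-3)-(2g-1)=g-2$ independent conditions on $\cM_g$, i.e. that Ceresa-triviality lies at least $g-2$ conditions away from the generic curve. Proving that the relevant infinitesimal invariant has rank $\ge g-2$ uniformly in $g$ --- not merely on the sporadic curves of \S\ref{s:ex} --- is the step I expect to be genuinely hard, and is presumably why the statement is recorded here only as a conjecture.
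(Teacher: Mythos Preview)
The statement you are addressing is recorded in the paper as a \emph{conjecture}, not a theorem; the paper offers no proof of any of parts (a), (b), or (c). What the paper does provide is evidence: isolated examples in genera $3,4,5,7$ and two one-dimensional families (\S\ref{s:ex}), together with the remark that no genus-$6$ example is known and that the sufficient condition $(H^1(X)^{\otimes 3})^{\Aut(X)}=0$ is expected to fail for all non-hyperelliptic genus-$6$ curves. So there is nothing in the paper for your proposal to be compared against.

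Your write-up is accordingly a research outline rather than a proof, and you appear to recognize this: for (a) you concede ``I do not expect a closed-form uniform recipe''; for (b) your argument is conditional on an unproved strengthening of (a) (finding, for every $r$, branch data with $r$ branch points satisfying the character identity \emph{and} non-hyperellipticity); and for (c) you correctly observe that no construction can yield an upper bound and that an infinitesimal-invariant argument of the required strength is not available. These are fair assessments of where the obstacles lie, and your reduction via Theorem~\ref{th:van} and the Chevalley--Weil/Hurwitz formalism matches the paper's own toolkit. But none of the three parts is established by your proposal, and the paper does not claim otherwise. If the intent was to supply a proof, there is a genuine gap in each part; if the intent was to sketch a plausible line of attack, then it is broadly consonant with the paper's viewpoint, though the paper itself stops at stating the conjecture and does not commit to any such program.
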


\begin{conj}\label{conj g'}
\begin{altenumerate}
\item[(a)] There exist   non-hyperelliptic curves $X$ of arbitrary large genera with  $$(H^1(X)^{\otimes 3})^{\Aut(X)}=0.$$
\item[(b)] For any integer $d$, there exist an integer $g$ and a connected smooth family of  genus $g$ curves $f:\mathcal{X}\to \mathcal{B}$ such that under the map $\mathcal{B}\to \mathcal{M}_g $ induced by $f$, the image of $\mathcal{B}$ has dimension at least $d$, and such that, for every $b\in \mathcal{B}(\BC)$,  the fiber $X_b$ of $f$ over $b$ is non-hyperelliptic and has $(H^1(X_b)^{\otimes 3})^{\Aut(X_b)}=0$.

\end{altenumerate}

\end{conj}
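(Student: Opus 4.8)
The plan is to reduce both parts to a purely representation-theoretic condition and then to exhibit it along an infinite tower of group-theoretic data. The first reduction is that for any subgroup $G \subseteq \Aut(X)$ one has $(H^1(X)^{\otimes 3})^{\Aut(X)} \subseteq (H^1(X)^{\otimes 3})^{G}$, so it suffices to produce a convenient $G$ with $(H^1(X)^{\otimes 3})^{G}=0$; this is also the hypothesis of Theorem \ref{th:van}, so (a) and (b) would in particular reprove the vanishing of $[\Delta_\xi]$. Writing $W = H^0(X,\Omega^1_X)$, we have $H^1(X) = W \oplus W^{\ast}$ as $G$-representations, and by the topological Lefschetz fixed point formula the character of $H^1(X)$ is $\chi(\sigma) = 2 - \#\Fix(\sigma)$ for $\sigma \neq 1$ and $\chi(1) = 2g$. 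Expanding $(W\oplus W^{\ast})^{\otimes 3}$, the vanishing of its $G$-invariants is equivalent to the simultaneous vanishing of $(W^{\otimes 3})^{G}$ and of $\Hom_{G}(W, W^{\otimes 2})$, the remaining four terms contributing equal dimensions by duality. When $X \to X/G = \BP^1$ is realized as a Galois cover, the $G$-module $W$ is computed from the branch data by the Chevalley--Weil formula, so the whole condition becomes an explicit arithmetic condition on the ramification, amenable to verification.

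For part (a) the cleanest route is to seek, for arbitrarily large $n$, a finite group $G_n$ carrying a self-dual irreducible $\rho_n$ of dimension $n$ that is realizable as $H^0(\Omega^1)$ of a $G_n$-cover of $\BP^1$. In that case $H^1(X) = \rho_n \oplus \rho_n$ and, since $\rho_n$ is self-dual, the two conditions above coincide and the whole requirement collapses to the single statement that $\rho_n$ does not occur in $\rho_n \otimes \rho_n = \Sym^2\rho_n \oplus \wedge^2\rho_n$, equivalently $\frac{1}{|G_n|}\sum_{\sigma} \chi_{\rho_n}(\sigma)^3 = 0$. I would search for such $\rho_n$ among the character tables of finite groups of Lie type (for instance certain Deligne--Lusztig series of $\PSL_2(\BF_q)$ or of higher-rank groups), where the relevant cubic character sums can be evaluated uniformly in $q$. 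Non-hyperellipticity would follow from a standard criterion: a hyperelliptic $X$ carries a central involution acting as $-1$ on $H^1(X)$, which is incompatible with $H^1(X)$ being a large self-dual irreducible of a centerless $G_n$; alternatively one invokes a genus/automorphism bound in the spirit of Lemma \ref{cor:Hur}.

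For part (b) the key observation is that in a family of $G$-covers $f:\mathcal{X}\to\mathcal{B}$ over a connected base, the $G$-representation $H^1(X_b)$ is the fibre of a local system carrying a $G$-action, hence is \emph{constant} in $b$; therefore the condition $(H^1(X_b)^{\otimes 3})^{G}=0$ holds for every $b$ as soon as it holds for one, and by the first reduction this forces $(H^1(X_b)^{\otimes 3})^{\Aut(X_b)}=0$ as well. Thus part (b) reduces to producing, for each $d$, a single group-theoretic datum for which the condition holds and whose Hurwitz space of $G$-covers of $\BP^1$ with $r$ branch points has dimension $r-3 \geq d$, mapping generically finitely to $\mathcal{M}_g$ (which holds once $\Aut(X_b)=G$ generically). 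The mechanism would be to keep the inertia groups small, so that each added branch point contributes little to $W$, while letting $|G|$ grow with $r$, so that $W$ stays far from the regular representation even though $r$ is large.

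The main obstacle is precisely this last tension. The Chevalley--Weil formula shows that, for a fixed group, increasing the number of branch points drives $W$ toward a large multiple of the regular representation minus the trivial, and $(W^{\otimes 3})^G$ is then forced to be nonzero; so no fixed $G$ can support the condition for all $d$, and the families of \S\ref{s:ex} obtained from \cite{MSSV} are necessarily low-dimensional. What is needed is an infinite, scalable tower of pairs (group, ramification) whose attached representations remain ``tensor-rigid'' --- avoiding self-occurrence in their tensor squares and avoiding the trivial summand in their tensor cubes --- uniformly as the genus grows. Verifying such uniform tensor-rigidity, rather than checking finitely many cases by computer as in \S\ref{s:ex}, is the crux: it would require either a clean family from the representation theory of groups of Lie type together with a Chevalley--Weil realizability check, or an essentially new construction. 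This is why the statement is posed as a conjecture rather than a theorem.
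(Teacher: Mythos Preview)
The statement you are addressing is Conjecture~\ref{conj g'}, and the paper does \emph{not} prove it: it is explicitly labeled a conjecture and left open, with the authors only offering the low-genus examples of \S\ref{s:ex} as evidence. So there is no ``paper's own proof'' to compare against.

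Your proposal is not a proof either, and you say as much in your final paragraph. What you have written is a correct and useful analysis of the landscape: the reduction to a subgroup $G\subseteq\Aut(X)$, the Hodge splitting $H^1(X)=W\oplus W^\ast$, the reformulation in terms of $(W^{\otimes 3})^G$ and $\Hom_G(W,W^{\otimes 2})$, the constancy of the $G$-module $H^1(X_b)$ along a connected Hurwitz family, and the tension in part~(b) between enlarging $r$ (to get dimension) and keeping $W$ away from the regular representation are all valid observations. But none of this produces the required infinite supply of $(G_n,\mathcal{C}_n)$ with the tensor-cube vanishing, nor the families of growing dimension; you have identified precisely the missing ingredient rather than supplied it.

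One small caution on your part~(a) sketch: asking that $H^1(X)=2\rho_n$ with $\rho_n$ self-dual irreducible is quite restrictive, and the paper's own Theorem~\ref{thm:Hur PSL} (together with the discussion around Corollary~\ref{cor:tri}) shows that within the natural candidate family $\PSL_2(\BF_q)$ this happens essentially only for the Fricke--Macbeath curve. So the search for an infinite tower would likely have to go beyond $\PSL_2$, which reinforces that the conjecture is genuinely open.
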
 

 For the first open case in Conjecture \ref{conj g} (a), we have not found any example of genus $g=6$ curve with vanishing $[\Delta_\xi]$. 
In fact,    we  expect that  all non-hyperelliptic  genus  6 curves $X$ have $(H^1(X)^{\otimes 3})^{\Aut(X)}\neq 0$ (which also explains the    formulation of Conjecture \ref{conj g'} (a)). 
This is indeed the case for
 non-hyperelliptic  genus  6 curves   in \cite{MSSV}.
  And those $X$ not in \cite{MSSV}  by definition have ``small" automorphism groups so that  $(H^1(X)^{\otimes 3})^{\Aut(X)}$ are less likely to be $ 0$.

  As suggested by part (a)'s of  these two conjectures,  $(H^1(X)^{\otimes 3})^{\Aut(X)}=0$ should  not be a necessary condition for the vanishing of $[\Delta_\xi]$. This can already be seen in genus 3, for example,
   from  the genus 3 quotient of  the Fricke--Macbeath curve  (Corollary \ref{cor:qt FM} and Lemma \ref{lem:qt FM}). As another example, if we assume a  conjecture of Beilinson and Bloch relating  Chow groups and $L$-functions, then the genus 3 curve used by Beauville  and Schoen \cite{BS} should have vanishing $[\Delta_\xi]$ in the Chow group, even though it does not satisfy $(H^1(X)^{\otimes 3})^{\Aut(X)}=0$. 
In our next paper \cite{QZ2}, we will provide more examples using Shimura curves.

        \subsection{Chow--K\"unneth modified diagonal cycle and Faber--Pandharipande cycle}
        
  Our proof of Theorem \ref{th:van} on the vanishing of the Gross--Schoen modified diagonal cycle is  based on the study of the 
 Chow--K\"unneth modified diagonal cycle on $X^3$ \eqref{CKcycle}. It 
coincides with $[\Delta_e]$ when $e$  is the class of  an $F$-point, and in general differs from $[\Delta_e]$ 
by pullbacks of the following Faber--Pandharipande cycle  
(see  \eqref{diffcycle}).

        Let  $\delta:X\to X^2$ be the   diagonal embedding.        Following a construction of 
             Faber and Pandharipande, define a zero-cycle    (see for example \cite{GG})
             \begin{equation}\label{FPcycle}  
        z_e  :=e\times e-\delta _*e \in \Ch^2(X^2).
       \end{equation}
For $e=\xi$, the class $z_\xi$ is known to vanish if $X$ is hyperelliptic. Faber and Pandharipande proved $z_\xi=0$ if the genus of $X$ is $3$; Green and Griffiths \cite{GG} proved that $z_\xi\neq 0$ if $X$ is a general curve of genus $g\geq 4$.
 
When $e=\xi$, the vanishing of either modified diagonal cycle implies the vanishing of the other modified diagonal cycle  as well as 
  $z_\xi$, by Proposition \ref{prop:nonvan}.
   We prove   the vanishing of  the Chow--K\"unneth modified diagonal cycle assuming $(H^1(X)^{\otimes 3})^{G}=0$ (a special case
   of Theorem \ref{good}).  Then we have Theorem \ref{th:van} and    the following theorem, proved in \S\ref{ss:proof}.
           \begin{thm} \label{th:vanFP} If $(H^1(X)^{\otimes 3})^{G}=0$, then  the class $      z_ \xi  $ vanishes in $\Ch^2(X^2)$.
      \end{thm}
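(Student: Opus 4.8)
The plan is to obtain Theorem~\ref{th:vanFP} as a formal consequence of the vanishing of the modified diagonal cycle, so that the substantive work all lies in Theorem~\ref{th:van} (equivalently, in the special case of Theorem~\ref{good}). Under the hypothesis $(H^1(X)^{\otimes 3})^G=0$, Theorem~\ref{good} gives that the Chow--K\"unneth modified diagonal cycle on $X^3$ defined in \eqref{CKcycle} vanishes in $\Ch^2(X^3)$. On the other hand, by \eqref{diffcycle} this cycle equals $[\Delta_\xi]$ plus a fixed $\BZ$-linear combination, with nonzero coefficients, of the pullbacks $p_{12}^\ast z_\xi$, $p_{23}^\ast z_\xi$, $p_{31}^\ast z_\xi$ of the Faber--Pandharipande cycle $z_\xi\in\Ch^2(X^2)$ defined in \eqref{FPcycle} along the three coordinate projections $p_{ij}\colon X^3\to X^2$. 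Since also $[\Delta_\xi]=0$ by Theorem~\ref{th:van}, this linear combination of the $p_{ij}^\ast z_\xi$ vanishes in $\Ch^2(X^3)$, and it remains only to extract from this the vanishing of $z_\xi$ itself.

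For the extraction I would intersect the resulting relation with the divisor $p_3^\ast[q]=X\times X\times\{q\}$ for a point $q\in X(\BC)$, and then push forward along $p_{12}$. The point is that $p_3\colon X^3\to X$ factors through both $p_{23}$ and $p_{31}$, so that $p_{23}^\ast z_\xi\cdot p_3^\ast[q]$ and $p_{31}^\ast z_\xi\cdot p_3^\ast[q]$ are pullbacks of classes of the shape $z_\xi\cdot(\text{divisor})\in\Ch^3(X^2)=0$ on the surface $X^2$, hence vanish; whereas $p_3$ does not factor through $p_{12}$, and $p_{12}$ restricts to an isomorphism on $X\times X\times\{q\}$, so the projection formula gives $p_{12,\ast}\bigl(p_{12}^\ast z_\xi\cdot p_3^\ast[q]\bigr)=z_\xi\cdot p_{12,\ast}p_3^\ast[q]=z_\xi$. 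Thus a nonzero multiple of $z_\xi$ equals zero, that is $z_\xi=0$ in $\Ch^2(X^2)$. This merely realizes concretely the implication, recorded in Proposition~\ref{prop:nonvan}, that for $e=\xi$ the vanishing of a modified diagonal cycle forces $z_\xi=0$; one could equally well just invoke Proposition~\ref{prop:nonvan} and skip this computation.

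The genuine obstacle is upstream rather than in this last deduction: everything rests on Theorem~\ref{good}, whose proof is where the representation-theoretic input $(H^1(X)^{\otimes 3})^G=0$ is actually used. If instead one wanted to establish from scratch the comparison \eqref{diffcycle} invoked above, the delicate point would be to compute, in terms of the Chow--K\"unneth projectors $\pi_0^\xi,\pi_1^\xi,\pi_2^\xi\in\Ch^1(X\times X)$, the $(1,1,1)$-component of the small diagonal $[\Delta]\in\Ch^2(X^3)$ and to check that it exceeds $[\Delta_\xi]$ by precisely a $\BZ$-linear combination of the $p_{ij}^\ast z_\xi$, with the correct nonzero coefficients and no further error terms.
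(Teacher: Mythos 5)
Your final extraction step is correct: intersecting $\sum_{i<j}\pi_{ij}^{\ast}z_\xi$ with $X\times X\times\{q\}$ and pushing forward along $\pi_{12}$ kills the $(2,3)$ and $(3,1)$ terms (they become pullbacks of classes in $\Ch^3(X^2)=0$) and returns $z_\xi$ from the $(1,2)$ term. The problem is how you arrive at the relation $\sum_{i<j}\pi_{ij}^{\ast}z_\xi=0$: you subtract Theorem \ref{th:van} from the vanishing of the Chow--K\"unneth cycle \eqref{CKcycle}. But in this paper Theorem \ref{th:van} is not available independently of the statement you are proving --- it is itself deduced from $\lb\delta_\xi^{\otimes 3}\rb_\ast[\Delta]=0$ \emph{via} $z_\xi=0$, because by \eqref{diffcycle} the Chow--K\"unneth cycle and $[\Delta_\xi]$ differ exactly by $\sum_{i<j}\pi_{ij}^{\ast}z_\xi$; this is the content of Proposition \ref{prop:nonvan}(3), whose proof invokes part (2). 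Taking Theorem \ref{th:van} as an input therefore makes your argument circular. Note also that you cannot simply run your extraction on the one relation you do have, $\lb\delta_\xi^{\otimes 3}\rb_\ast[\Delta]=0$, with your choice of divisor: writing $\pi_3$ for the third coordinate projection, one computes $\pi_{12,\ast}\bigl([\Delta_\xi]\cdot\pi_3^{\ast}[q]\bigr)=([q]-\xi)\times([q]-\xi)$, which is not visibly zero, so the unknown term $[\Delta_\xi]$ does not drop out.

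The repair is exactly the paper's proof: apply Theorem \ref{good} to the diagonal-$G$-invariant class $[\Delta]$ to get $\lb\delta_\xi^{\otimes 3}\rb_\ast[\Delta]=0$, and then invoke Proposition \ref{prop:nonvan}(2), which plays the role of your extraction but with the divisor $[X]\times[\delta_\ast X]$ in place of $\pi_3^{\ast}[q]$: intersecting with it and projecting to the product of the first two factors sends $\lb\delta_\xi^{\otimes 3}\rb_\ast[\Delta]$ to $(2g+2)z_\xi$, so $z_\xi=0$ follows in one step with no reference to $[\Delta_\xi]$. (Theorem \ref{th:van} is then a corollary of this, not an ingredient.) Your closing paragraph correctly identifies \eqref{diffcycle} as the comparison to verify, but the essential cycle-theoretic input for the present theorem is Proposition \ref{prop:nonvan}(2), not a deduction from $[\Delta_\xi]=0$.
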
  
    
     \subsection{Finite fields}\label{sec:Finite fields}
   Finally in this introduction, we let $F$ be a finite field. 
By a theorem of Soul\'e \cite[Theorem 3]{Sou},  on a product of curves  
 over   $F$,  Chow 1-cycles  with $\BQ_\ell$-coefficients coincide (via the cycle class map) with 
 Tate 1-cycles with $\BQ_\ell$-coefficients. Here, $\ell$ is  different from  the characteristic of $F$.
 In particular, since  modified diagonal cycles are cohomologically trivial, they vanish in the Chow groups. In the proof of Soul\'e's theorem, the Frobenius map plays a similar role to the automorphism group in our result.   
 
   \subsection*{Acknowledgment}
The work started from an attempt to answer a question
of Dick Gross in his talk at the Kudla Fest in 2021 where he asked whether one can show the vanishing of the modified diagonal cycle in the Chow group for the Fricke--Macbeath curve. We thank Frank~Calegari, Dick~Gross, Daniel~Litt, Jennifer~Paulhus, Bjorn~Poonen, Dipendra~Prasad, Junliang~Shen, Burt~Totaro, 
Shouwu~Zhang for their comments and helpful discussions.

 C. Q.  is partially supported by the NSF grant DMS \#2000533.
 W. Z. is partially supported by the NSF grant DMS \#1901642.

\section{Proof of the vanishing theorem}
   \label{Product of curves}
 
 In this section we prove Theorem \ref{th:van}. The key observation is that
one can explicitly  describe  self-correspondences  on a  curve annihilating the cohomology of a particular degree.

\subsection{Divisorial correspondences}\label{Divisorial correspondences}
  Let $X$ be a curve over $\BC$.  Consider the cycle class map,  
 $$\cl:\Ch^{1}(X^2)\to H^2(X^2)\cong \bigoplus_{i=0}^2 \End(H^i(X)).$$
 Here the isomorphism is defined by K\"unneth decomposition  and Poincar\'e duality. 
  
 Fix $e\in \Ch^1(X)$    of degree 1. 
 \begin{defn}
  Let $\DC(X^2)$ be the subspace of  $\Ch^{1}(X^2)$ of $z$'s such that $z_* e\in \BQ e$ and $z^*e\in \BQ e$.  
  Let $\DC^1(X^2)$ be the subspace of  $\Ch^{1}(X^2)$ of $z$'s such that $z_* e=0$ and $z^*e=0$.  
  \end{defn}
  We call  cycles  in $\DC^1(X^2)$  divisorial correspondences with respect to  $e$.
   For notational convenience,  we also define  $$ \DC^0(X^2)= \BQ(e\times [X]),\quad  \DC^2(X^2)= \BQ([X]\times e).$$
For $z\in \DC(X^2)$, we assign 
$$
 \lb    z^*e \times [X],  z-z^*e  \times [X]-[X]\times  z_* e,
[X]\times   z_* e\rb
$$ 
to get the following direct sum decomposition (the sum is clearly direct):  
   \begin{equation}\label{refm2}
    \DC(X^2)=\DC^0(X^2)\oplus  \DC^1(X^2)\oplus \DC^2(X^2) .
    \end{equation}
    
    The following lemma should be well-known. For the sake of completeness we include a proof.
     \begin{lem}\label{clinj}

    The restriction $\cl|_{\DC(X^2)}$ is injective.  
 
    \end{lem}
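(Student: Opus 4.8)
The plan is to show that the cycle class map is injective on $\DC(X^2)$ by analyzing each graded piece in the decomposition \eqref{refm2} separately, since the cycle class map respects the K\"unneth grading on $H^2(X^2) \cong \bigoplus_{i=0}^2 \End(H^i(X))$. The pieces $\DC^0(X^2) = \BQ(e \times [X])$ and $\DC^2(X^2) = \BQ([X] \times e)$ are one-dimensional, and one checks directly that $\cl(e \times [X])$ is a nonzero element of $\End(H^0(X)) = \BQ$ (it acts as the identity on $H^0$, up to the normalization from $\deg e = 1$) and likewise $\cl([X] \times e)$ is nonzero in $\End(H^2(X))$; so $\cl$ is injective on these two pieces. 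The content is therefore entirely in the middle piece $\DC^1(X^2)$, which maps into $\End(H^1(X))$.

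For the middle piece, I would argue as follows. A class $z \in \DC^1(X^2)$ with $\cl(z) = 0$ in particular induces the zero map $H^1(X) \to H^1(X)$, and also annihilates $H^0$ and $H^2$ by construction (since $z^*e = z_*e = 0$ pins down the $\End(H^0)$ and $\End(H^2)$ components). Write $z = \sum_j D_j$ as a $\BQ$-combination of irreducible curves on $X^2$; after removing horizontal and vertical components (which are absorbed into $\DC^0$ and $\DC^2$) one may assume each $D_j$ maps dominantly to both factors. Such a correspondence, viewed as a divisorial correspondence, gives by the theory of the Picard variety a homomorphism $J_X \to J_X$ (equivalently a point of $\Hom(J_X, J_X) \otimes \BQ$), and the induced action on $H^1(X) = H^1(J_X)$ is exactly its realization on rational Hodge structures / Betti cohomology. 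The classical fact — this is the substance of the ``well-known'' claim — is that a divisorial correspondence inducing the zero homomorphism on $H^1$ is algebraically (indeed rationally, after the degree/fiber adjustments already made) equivalent to zero; concretely, the map $\mathrm{Pic}^0(X^2)/(\text{horizontal}+\text{vertical}) \to \Hom(J_X,J_X)$ is injective, and $\Hom(J_X,J_X) \hookrightarrow \End(H^1(X))$ is injective because an isogeny-type endomorphism acting as zero on rational homology is zero. Combining the three graded pieces gives injectivity of $\cl$ on all of $\DC(X^2)$.

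Concretely, the steps in order: (1) observe $\cl$ is compatible with the direct sum decomposition \eqref{refm2} on source and the K\"unneth decomposition on target, so it suffices to treat each summand; (2) dispatch $\DC^0$ and $\DC^2$ by an explicit degree computation; (3) for $\DC^1$, reduce a class in the kernel to an honest divisorial correspondence (no fiber components) and invoke the identification of $\mathrm{NS}$-trivial divisorial correspondences with $\Hom(J_X,J_X)$ together with the faithfulness of the representation of $\Hom(J_X,J_X)$ on $H^1(X,\BQ)$; (4) conclude. The main obstacle is step (3): one must be careful that ``cohomologically trivial correspondence'' really forces ``numerically/rationally trivial'' here, which uses that on a product of curves the relevant piece of $\Ch^1$ modulo algebraic equivalence is governed by $\Hom$ of Jacobians (so there is no Griffiths-group subtlety in codimension $1$), and that $\BQ$-coefficients let us pass freely between algebraic and rational equivalence for divisors. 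Everything else is bookkeeping with K\"unneth components and Poincar\'e duality.
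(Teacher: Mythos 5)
Your proof is correct and follows essentially the same route as the paper: reduce to each K\"unneth graded piece, dispatch $\DC^0$ and $\DC^2$ by inspection, and identify $\DC^1(X^2)$ with $\End^0(J_X)$, which acts faithfully on $H^1(X)$. The paper simply cites \cite[Theorem 3.9]{Sch} for that identification rather than sketching the classical divisorial-correspondence argument as you do.
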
 

\begin{proof}Note that the image of  $\cl|_{\DC^i(X^2)}$ is in $\End(H^i(X))$ for $0\leq i\leq 2$. Therefore it suffices to show the injectivity of $\cl|_{\DC^i(X^2)}$ for every $i\in\{0,1,2\}$. The maps $\cl|_{\DC^0(X^2)},\cl|_{\DC^0(X^2)}$ are obviously injective. Note that there is a natural isomorphism (for a proof, see  \cite[Theorem 3.9]{Sch})
$$
\DC^1(X^2)\simeq \End^0(J_X),
$$where $J_X$ is  the Jacobian variety of $X$, and $\End^0(J_X)$ denotes the $\BQ$-endomorphism ring of the abelian variety $J_X$.
Since the natural map $\End^0(J_X)\to \End(H^1(X))$ is injective, the map $\cl|_{\DC^1(X^2)}$ is injective and the proof is complete.      \end{proof}
   
  Let $\delta:X\to X^2$ be the   diagonal embedding.
 \begin{rmk}
We may view $\Ch^1(X^2)$ as the ring of self-correspondence on the curve $X$. Then $\DC(X^2)$ is a subring and  the identity   is the diagonal  cycle  $[\delta_* X]$. We have a decomposition (as $\BQ$-vector spaces)
$$
\Ch^1(X^2)= \DC(X^2)\oplus (\pi_1^\ast \Ch^1(X)_0\oplus \pi_2^\ast \Ch^1(X)_0),
$$
where $\pi_1,\pi_2: X^2\to X$ are the two projection maps, and  $\pi_1^\ast \Ch^1(X)_0\oplus \pi_2^\ast \Ch^1(X)_0$ is an ideal of the ring $\Ch^1(X^2)$.
  \end{rmk}

 For a fixed $e\in\Ch_0(X)$,  let  $\delta_{e}$ be the projection of $[\delta_* X]$  to $\DC^1(X^2)$ via \eqref{refm2}, i.e.,
   \begin{equation}
  \label{delta}
\delta_{e}:=[\delta_* X]-(e\times [X] +[X]\times e)\in  \DC^1(X^2).
 \end{equation}   It is straightforward to check the following lemma.
 \begin{lem} \label{zdelta}The left/right composition by $\delta$ on  $\DC(X^2)$  is the projection to $\DC^1(X^2)$ via \eqref{refm2}.
 \end{lem}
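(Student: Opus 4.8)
The plan is to prove the asserted identity for $\delta_e$ after applying the cycle class map, and then to conclude by the injectivity of $\cl$ on $\DC(X^2)$ established in Lemma~\ref{clinj}.

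First I would collect the formal inputs. The group $\Ch^1(X^2)$ is a ring under composition of self-correspondences, with unit the diagonal cycle $[\delta_*X]$; the cycle class map is a ring homomorphism $\cl\colon\Ch^1(X^2)\to\bigoplus_{i=0}^{2}\End(H^i(X))$; and $\DC(X^2)$ is a subring, because for $z,w\in\DC(X^2)$ we have $(w\circ z)_*e=w_*(z_*e)\in\BQ e$ and $(w\circ z)^*e=z^*(w^*e)\in\BQ e$. In particular $\delta_e\circ z$ and $z\circ\delta_e$ again lie in $\DC(X^2)$ for every $z\in\DC(X^2)$. Next I would compute $\cl(\delta_e)$: since $\deg e=1$, the class $\cl(e\times[X])$ is the identity of the summand $\End(H^0(X))$ and $\cl([X]\times e)$ is the identity of $\End(H^2(X))$, while $\cl([\delta_*X])$ is the identity of $\bigoplus_i\End(H^i(X))$; so by \eqref{delta}, $\cl(\delta_e)$ is the identity of $\End(H^1(X))$ and is zero on $H^0(X)$ and $H^2(X)$.

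Now fix $z\in\DC(X^2)$ and write $\cl(z)=\phi_0\oplus\phi_1\oplus\phi_2$ with $\phi_i\in\End(H^i(X))$. As composition in $\bigoplus_i\End(H^i(X))$ is componentwise, composing $\cl(z)$ with $\cl(\delta_e)$ on either side produces $0\oplus\phi_1\oplus0$; that is, $\cl(\delta_e\circ z)=\cl(z\circ\delta_e)=0\oplus\phi_1\oplus0$. On the other hand, unwinding the Künneth decomposition and Poincaré duality (and using $\deg e=1$ once more), one checks that $\cl(z^*e\times[X])$ is the $\End(H^0(X))$-part $\phi_0$ of $\cl(z)$ and $\cl([X]\times z_*e)$ is the $\End(H^2(X))$-part $\phi_2$; concretely, under $\End(H^0(X))\cong\End(H^2(X))\cong\BQ$ this says $\deg(z^*e)=\phi_0$ and $\deg(z_*e)=\phi_2$, which follow from the projection formula. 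Hence $\cl\bigl(z-z^*e\times[X]-[X]\times z_*e\bigr)=0\oplus\phi_1\oplus0=\cl(\delta_e\circ z)=\cl(z\circ\delta_e)$. Since all three cycles belong to $\DC(X^2)$, Lemma~\ref{clinj} forces $\delta_e\circ z=z\circ\delta_e=z-z^*e\times[X]-[X]\times z_*e$, which by \eqref{refm2} is precisely the projection of $z$ onto $\DC^1(X^2)$.

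The only step that demands real care is this last piece of bookkeeping---matching the cycles $z^*e\times[X]$ and $[X]\times z_*e$ with the correct Künneth components of $\cl(z)$---while everything else is formal, which is presumably why the authors call the lemma straightforward. One can also bypass cohomology: a direct computation on $X^3$, pushing forward along the projection that forgets the middle factor, gives $(e\times[X])\circ z=(z^*e)\times[X]$ and $([X]\times e)\circ z=d\cdot([X]\times e)$ where $d$ is the degree of $z$ over its first factor, and for $z\in\DC(X^2)$ one has $z_*e=d\cdot e$ by comparing degrees; subtracting these from $[\delta_*X]\circ z=z$ proves the statement for left composition, after which the transpose symmetry $\delta_e^{t}=\delta_e$ yields the statement for right composition.
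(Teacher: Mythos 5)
Your argument is correct, and it is worth noting that the paper offers no proof at all here (the lemma is left as ``straightforward to check''), so there is nothing to deviate from. Of your two routes, the direct correspondence computation in your final paragraph is surely the intended check: $[\delta_*X]\circ z=z$, $(e\times[X])\circ z=(z^*e)\times[X]$, and $([X]\times e)\circ z=[X]\times (z_*e)$ (the last using $z_*e=\deg(z_*e)\,e$ for $z\in\DC(X^2)$ and $\deg e=1$), after which subtraction gives exactly the middle component of \eqref{refm2}, and the right-composition case follows by transposing since $\delta_e^{t}=\delta_e$. Your primary route through the cycle class map is also valid --- the bookkeeping matching $\deg(z^*e)$ and $\deg(z_*e)$ with the $H^0$- and $H^2$-components of $\cl(z)$ is right --- but it is heavier than necessary: it invokes Lemma~\ref{clinj}, which itself rests on $\DC^1(X^2)\simeq\End^0(J_X)$, to establish what is really a formal identity of cycles that holds in $\Ch^1(X^2)$ with no appeal to cohomology. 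Either way the lemma is proved; if you keep the cohomological version as the main argument, do state explicitly (as you began to) that all three cycles being compared lie in $\DC(X^2)$, since that is where the injectivity of $\cl$ is available.
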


   \subsection{Product of curves}\label{Application to a product of curves} 
Let $X^n$  be the $n$-th fold self-product of $X$. We idenitify $X^n\times X^n=X^{2n}$, sending $((x_1,\cdots, x_n),(y_1,\cdots, y_n))\in X^n\times X^n$ to $(x_1,\cdots, x_n,y_1,\cdots, y_n)\in X^{2n}$. Let $\pi_{i,j}:X^{2n}\to X\times X$ be the projection map to the $(i,j)$-th factor.
 We have a natural map
 $$
 \xymatrix{ \bigotimes_{i=1}^n\Ch^1(X^2)\ar[r]& \Ch^n(X^n\times X^n)}
 $$which sends a tensor $z_1\otimes \dots \otimes z_n$ to the intersection product of divisors $\pi_{1,n+1}^\ast (z_1), \dots,  \pi_{n,2n}^\ast (z_n)$ on $X^{2n}$.
 
 Restricting this map to its subspace   and applying the cycle class maps, we have a commutative diagram:
 \begin{equation}\label{eq:prod}
\xymatrix{ \bigotimes_{i=1}^n\DC^1(X^2)\ar[r]\ar[d]^{\cl}& \Ch^n(X^n\times X^n)\ar[d]^{\cl}\\
\bigotimes_{i=1}^n \End(H^{1}(X))\ar[r]
& \End( H^n(X^{n}))}
 \end{equation}
 where  the bottom map is induced by  the K\"unneth decomposition. By Lemma \ref{clinj}, the left map is injective.  By the obvious injectivity of the bottom map, the top map of \eqref{eq:prod} is also injective.  We will therefore view an element in $\bigotimes_{i=1}^n\DC^1(X^2)$ as an element in $ \Ch^n(X^n\times X^n)$.
By 
  the K\"unneth decomposition, we have the following proposition.
 \begin{prop}\label{2dneq n}
 If $m\neq n$,  $\bigotimes_{i=1}^n\DC^1(X^2)$ annihilates $H^m(X^n)$, by either pushforward or pullback.
 \end{prop}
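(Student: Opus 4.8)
The plan is to reduce Proposition~\ref{2dneq n} to a purely cohomological statement via the injectivity already established in diagram~\eqref{eq:prod}, and then to compute the relevant K\"unneth components directly. First I would recall that an element $z = z_1\otimes\cdots\otimes z_n$ of $\bigotimes_{i=1}^n\DC^1(X^2)$ has been identified, through the injective top map of \eqref{eq:prod}, with the cycle $\pi_{1,n+1}^\ast(z_1)\cdot\ldots\cdot\pi_{n,2n}^\ast(z_n)\in\Ch^n(X^n\times X^n)$, and that under $\cl$ its class lands in $\bigotimes_{i=1}^n\End(H^1(X))$, viewed as a subspace of $\End(H^n(X^n))$ through the K\"unneth formula $H^n(X^n)\supseteq H^1(X)^{\otimes n}$ (a summand of the full K\"unneth decomposition $H^n(X^n)=\bigoplus_{i_1+\cdots+i_n=n}H^{i_1}(X)\otimes\cdots\otimes H^{i_n}(X)$). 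The content of the proposition is that such a correspondence, acting on $H^\ast(X^n)$, is zero on every graded piece $H^m(X^n)$ with $m\neq n$.

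The key step is the compatibility of the cycle class map with the action of correspondences: for a correspondence $\Gamma\in\Ch^n(X^n\times X^n)$, the induced map $\Gamma_\ast$ (resp.\ $\Gamma^\ast$) on $H^\ast(X^n)$ is computed by $\cl(\Gamma)$ acting via the standard formula $\mathrm{pr}_{2,\ast}(\mathrm{pr}_1^\ast(-)\cup\cl(\Gamma))$. Since $\cl(\Gamma)$ lies in the single K\"unneth component $H^n(X^n)\otimes H^n(X^n)^\vee\subseteq H^{2n}(X^n\times X^n)$ (using Poincar\'e duality $H^n(X^n)^\vee\cong H^n(X^n)$ up to a Tate twist), this component pairs nontrivially, under the projection formula, only with the K\"unneth piece $H^m(X^n)$ for which $m$ matches the degree of the relevant factor — that is, only $m=n$ survives, and for $m\neq n$ the pushforward/pullback is forced to vanish by degree reasons (the cup product $\mathrm{pr}_1^\ast H^m\cup (H^n\otimes H^n)$ lands in a component whose pushforward to the second factor is zero unless $m=n$). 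Concretely I would write $H^m(X^n)=\bigoplus_{i_1+\cdots+i_n=m}\bigotimes_j H^{i_j}(X)$ and observe that $z_j\in\DC^1(X^2)$ satisfies $\cl(z_j)\in\End(H^1(X))$, hence $\cl(z_j)$ kills $H^0(X)$ and $H^2(X)$; therefore on a summand $\bigotimes_j H^{i_j}(X)$ the operator $\bigotimes_j\cl(z_j)$ is zero unless every $i_j=1$, i.e.\ unless $m=n$.

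I do not expect a serious obstacle here — the proposition is essentially bookkeeping with the K\"unneth decomposition once the identification in \eqref{eq:prod} and Lemma~\ref{clinj} are in hand. The one point requiring mild care is the sign/twist convention in the Poincar\'e duality isomorphism $H^2(X^2)\cong\bigoplus_i\End(H^i(X))$ and the matching of ``pushforward versus pullback'' for correspondences with the transpose on $\End(H^i(X))$; this affects which of $H^0$ or $H^2$ is annihilated by $\cl(z_j)$ from each side, but since $\DC^1(X^2)$ is defined symmetrically by $z_\ast e = z^\ast e = 0$, both the $\ast$ and the upper-$\ast$ actions of each $z_j$ annihilate $H^0(X)$ and $H^2(X)$, and the argument goes through verbatim for both. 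Hence for any $m\neq n$ the operator $\bigotimes_{i=1}^n\cl(z_i)$ vanishes identically on $H^m(X^n)$, which by the compatibility of $\cl$ with correspondence actions and the injectivity of $\cl$ on the relevant space gives that $\bigotimes_{i=1}^n\DC^1(X^2)$ annihilates $H^m(X^n)$ by pushforward and by pullback, as claimed.
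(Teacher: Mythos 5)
Your proposal is correct and follows exactly the paper's (one-line) argument: the paper simply says ``by the K\"unneth decomposition,'' meaning precisely that $\cl$ of an element of $\bigotimes_{i=1}^n\DC^1(X^2)$ lies in $\bigotimes_{i=1}^n\End(H^1(X))$, which acts only on the $H^1(X)^{\otimes n}$ summand of $H^*(X^n)$ and hence kills every $H^m(X^n)$ with $m\neq n$. Your write-up just makes explicit the bookkeeping the paper leaves to the reader.
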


  For a class $z\in\DC(X^2) $, we denote
 \begin{equation}
 z^{\otimes n}: =\bigotimes_{i=1}^n z \in \bigotimes_{i=1}^n \DC(X^2).
 \end{equation} 
We will use the Chow--K\"unneth  projector  $\delta_e^{\otimes n}\in  \bigotimes_{i=1}^n \DC^1(X^2)$ for $\delta_e$ defined by \eqref{delta}. We remind the reader that this is \textit{not}  the  projector used by Gross and Schoen  \cite[Section 2]{GS} (in the case that $e $ is the class of an $F$-point), as we now discuss.

  \subsection{Modified diagonals}
 We will be  mainly interested in the case $n=3$.   The Gross--Schoen modified diagonal cycle with respect to $e$ is 
 \begin{equation}
 \label{GScycle}
[\Delta_e]= ([\delta_* X]- [X]\times e)^{\otimes 3}_*[\Delta]\in \Ch^2(X^3).
 \end{equation}  
   (It is straightforward to check that $[\Delta_e]$ coincides with the one in \cite[(1.1)]{Zha10} generalizing  \eqref{eq:mod diag}.)
Similar to  Proposition \ref {2dneq n}, 
we have the following proposition.
 \begin{prop}[{\cite[Proposition 3.1]{GS}}]\label{2dneq n'}
The cohomology class of  $[\Delta_{e}]$ is 0. 
 \end{prop}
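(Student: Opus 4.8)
The plan is to show that the cycle $[\Delta_e] \in \Ch^2(X^3)$ maps to zero under the cycle class map $\cl\colon \Ch^2(X^3) \to H^4(X^3)$, by tracking the K\"unneth components. First I would record that the modified diagonal cycle is defined in \eqref{GScycle} as $([\delta_* X] - [X]\times e)^{\otimes 3}_* [\Delta]$, where $[\Delta] \in \Ch^2(X^3)$ is the small diagonal and the correspondence $[\delta_* X] - [X]\times e \in \Ch^1(X^2)$ acts factor-by-factor. The point is to expand the tensor cube: writing $p = [\delta_* X] - [X]\times e$ as a self-correspondence of $X$, one has the decomposition \eqref{refm2}, and $[\delta_* X]$ has components in all three of $\DC^0, \DC^1, \DC^2$. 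In fact, since $\deg e = 1$, a direct computation shows that $[\delta_* X] - [X]\times e$ has zero $\DC^2$-component, so its components lie only in $\DC^0(X^2) \oplus \DC^1(X^2)$; more precisely $[\delta_* X] - [X]\times e = e \times [X] + \delta_e$ with $\delta_e \in \DC^1(X^2)$ from \eqref{delta}.

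Next I would compute $\cl([\Delta_e])$ by applying $\cl$ and using the compatibility of the cycle class map with pushforward along correspondences, together with the K\"unneth decomposition $H^\bullet(X) = H^0 \oplus H^1 \oplus H^2$. On cohomology, $\cl(e\times[X])$ is the projector onto $H^0(X)$ (up to the normalization coming from $\deg e = 1$), and $\cl(\delta_e)$ is the K\"unneth projector onto $H^1(X)$ — this is exactly the content of Lemma \ref{clinj} and the discussion around \eqref{delta}, i.e. $\delta_e$ realizes the Chow--K\"unneth projector $\pi^1$. Therefore the action of $\cl\big((e\times[X] + \delta_e)^{\otimes 3}\big)$ on $H^\bullet(X^3) = H^\bullet(X)^{\otimes 3}$ is the projector onto the subspace $\bigotimes_{i=1}^3 (H^0(X) \oplus H^1(X))$, i.e. the span of all K\"unneth pieces in which each of the three factors contributes cohomology in degree $\le 1$. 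Now the cohomology class $\cl([\Delta]) \in H^4(X^3)$ of the small diagonal is concentrated in total degree $4$; its K\"unneth components live in $H^{a}(X)\otimes H^{b}(X)\otimes H^{c}(X)$ with $a+b+c = 4$ and each of $a,b,c \le 2$. Applying the projector onto the part where each factor has degree $\le 1$ forces $a+b+c \le 3 < 4$, so every surviving component vanishes; hence $\cl([\Delta_e]) = 0$.

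To make this rigorous I would phrase it as: $\cl([\Delta_e]) = \cl(p^{\otimes 3})_* \cl([\Delta])$, expand $p^{\otimes 3} = (e\times[X] + \delta_e)^{\otimes 3}$ into $8$ terms, and observe that each term $q_1 \otimes q_2 \otimes q_3$ with $q_i \in \{e\times[X], \delta_e\}$ acts on $H^\bullet(X)^{\otimes 3}$ by projecting the $i$-th tensor factor onto $H^0(X)$ or $H^1(X)$ respectively; since $\cl([\Delta])$ has no component with all three factor-degrees $\le 1$ (total degree would be at most $3$, but it is $4$), every term kills $\cl([\Delta])$. This uses only the K\"unneth decomposition, Poincar\'e duality, and the identification of $\cl(e\times[X])$ and $\cl(\delta_e)$ with the degree-$0$ and degree-$1$ K\"unneth projectors, all of which are available from \S\ref{Divisorial correspondences} and Lemma \ref{clinj}.

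The main obstacle, such as it is, is bookkeeping: correctly identifying the cohomology class of $\delta_e$ with the K\"unneth projector $\pi^1(X)$ (including getting the self-duality/transpose conventions right so that pushforward by the correspondence really is the projector and not its adjoint), and being careful that $\cl$ commutes with the tensor/external product and pushforward operations used to build $[\Delta_e]$ from $[\Delta]$. None of this is deep — it is the standard formalism of Chow--K\"unneth projectors on curves — but it is the only place an error could creep in; the degree count $a+b+c = 4$ versus $\le 3$ is then immediate. (This is of course the argument of \cite[Proposition 3.1]{GS}, recast in the language of the projector $\delta_e^{\otimes 3}$ introduced above.)
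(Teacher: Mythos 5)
Your proof is correct and is essentially the paper's argument: the paper also deduces this from the Künneth degree count (Proposition \ref{2dneq n}, that $\DC^1(X^2)^{\otimes 3}$ kills $H^4(X^3)$), only packaged via the identity \eqref{diffcycle} and the cohomological triviality of the Faber--Pandharipande cycle $z_e$ rather than by expanding $([\delta_*X]-[X]\times e)^{\otimes 3}=(e\times[X]+\delta_e)^{\otimes 3}$ directly. Your decomposition, the identification of $e\times[X]$ and $\delta_e$ with the degree-$0$ and degree-$1$ Künneth projectors, and the count $a+b+c=4$ versus $\le 3$ are all sound.
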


Define the Chow--K\"unneth  modified diagonal cycle to be
  \begin{equation}
 \label{CKcycle}
 \lb\delta_e^{\otimes 3}\rb_*[\Delta]\in \Ch^2(X^3),
 \end{equation}  
 whose cohomology class is 0 by  Proposition \ref {2dneq n}.
Let $\pi_{ij}:X^3\to X^2$, $i< j\in\{1,2,3\}$,  be the projection to the product of $i$-th and $j$-th $X$.
It is straightforward to check  
\begin{equation}
 \label{diffcycle}
\lb\delta_e^{\otimes 3}\rb_*[\Delta]-[\Delta_e] =\sum_{i< j }\pi_{ij}^*z_e.
 \end{equation}  
 Here  $z_e$ is the Faber--Pandharipande cycle \eqref{FPcycle}. Two immediate consequences of \eqref{diffcycle} are as follows. 
 First,  since $z_e$ has cohomology class 0,
 Proposition \ref {2dneq n} implies Proposition \ref {2dneq n'}.
Second,  if $e$  is the class of  an $F$-point,  then   $z_e=0$ and  thus
$
\lb\delta_e^{\otimes 3}\rb_*[\Delta]=[\Delta_e] 
$. In general, we have the following proposition.

  \begin{prop}\label{prop:nonvan} 
  Let   the  genus $g$ of $X$ be at least $2$.
  
  (1) If
 $[\Delta_e]= 0$,  then $e$ is equal to
 $\xi $ defined by \eqref{eq:xi} and $z_\xi=0$. 

(2) If $\lb\delta_\xi^{\otimes 3}\rb_*[\Delta]=0$, 
  then $z_\xi=0$.

 (3) $\lb\delta_\xi^{\otimes 3}\rb_*[\Delta]=0$ if and only if  $[\Delta_\xi]= 0$.
  \end{prop}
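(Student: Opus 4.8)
\emph{Strategy.} Part (3) will follow formally from parts (1) and (2) via \eqref{diffcycle}, so the substance is (1) and (2). For both I will compute how two elementary operations act on the modified diagonal, using the explicit seven-term expression
\[
[\Delta_e]=\Delta-\Delta_{12}^e-\Delta_{23}^e-\Delta_{31}^e+\Delta_1^e+\Delta_2^e+\Delta_3^e
\]
valid for any degree-one $e$ (the generalization of \eqref{eq:mod diag} recorded after \eqref{GScycle}), in which $\Delta_{ij}^e=D_{ij}\cdot\pi_k^*e$ and $\Delta_i^e=\pi_j^*e\cdot\pi_k^*e$ for $\{i,j,k\}=\{1,2,3\}$, with $D_{ij}=\{x_i=x_j\}\in\Ch^1(X^3)$. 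Every cycle in sight lies in the subring of $\Ch^*(X^3)$ generated by the $D_{ij}$ and the pullbacks $\pi_i^*$, so both operations can be evaluated symbolically, using that Gysin pullback along an lci morphism is a ring homomorphism.

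\emph{Two computations.} (a) Intersecting with $\pi_3^*e$ and pushing forward along $\pi_{12}\colon X^3\to X^2$, a term-by-term evaluation (using $e\cdot e=0$ in $\Ch^2(X)=0$) gives
\[
\pi_{12,*}\bigl([\Delta_e]\cdot\pi_3^*e\bigr)=\delta_*e-e\times e=-z_e .
\]
(b) Let $d\colon X^2\hookrightarrow X^3$, $d(x,y)=(x,y,y)$, be the partial diagonal $D_{23}$, a regular embedding of codimension one. From $\pi_1 d=\pi_1$ and $\pi_2 d=\pi_3 d=\pi_2$ one gets $d^*\pi_1^*=\pi_1^*$, $d^*\pi_2^*=d^*\pi_3^*=\pi_2^*$ and $d^*D_{12}=d^*D_{31}=[\delta_*X]$, while the self-intersection formula gives the key term $d^*D_{23}=c_1(N_d)=\pi_2^*c_1(X)=-(2g-2)\pi_2^*\xi$. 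Expanding $d^*[\Delta_e]$ by the ring-homomorphism property and collecting terms yields
\[
d^*[\Delta_e]=2z_e+(2g-2)\bigl(e\times\xi-\delta_*\xi\bigr).
\]

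\emph{Conclusions.} For (1), assume $[\Delta_e]=0$. Then (a) forces $z_e=0$, i.e. $e\times e=\delta_*e$; feeding this into the vanishing of $d^*[\Delta_e]$ from (b) leaves $(2g-2)(e\times\xi-\delta_*\xi)=0$, hence $e\times\xi=\delta_*\xi$ since $g\ge2$. Subtracting $e\times e=\delta_*e$ gives $e\times(\xi-e)=\delta_*(\xi-e)$; applying $\pi_{1,*}$ and using $\deg(\xi-e)=0$ (so $\pi_{1,*}(e\times(\xi-e))=0$) against $\pi_{1,*}\delta_*(\xi-e)=\xi-e$ yields $\xi-e=0$, i.e. $e=\xi$, and then $z_\xi=z_e=0$. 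For (2), set $e=\xi$ in (b): since $\xi\times\xi-\delta_*\xi=z_\xi$ this reads $d^*[\Delta_\xi]=2g\,z_\xi$; a parallel computation (using $d^*D_{23}\cdot\pi_2^*\xi=0$ because $\xi\cdot\xi=0$) gives $d^*\pi_{12}^*z_\xi=d^*\pi_{13}^*z_\xi=z_\xi$ and $d^*\pi_{23}^*z_\xi=0$, so by \eqref{diffcycle}
\[
d^*\bigl(\delta_\xi^{\otimes 3}\bigr)_*[\Delta]=d^*[\Delta_\xi]+\sum_{i<j}d^*\pi_{ij}^*z_\xi=(2g+2)z_\xi ;
\]
thus $\bigl(\delta_\xi^{\otimes 3}\bigr)_*[\Delta]=0$ forces $z_\xi=0$. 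Part (3) is then immediate from \eqref{diffcycle}: if $[\Delta_\xi]=0$ then (1) gives $z_\xi=0$ and hence $\bigl(\delta_\xi^{\otimes 3}\bigr)_*[\Delta]=0$; conversely if $\bigl(\delta_\xi^{\otimes 3}\bigr)_*[\Delta]=0$ then (2) gives $z_\xi=0$ and hence $[\Delta_\xi]=0$.

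\emph{Main obstacle.} The crux is computation (b), and within it the excess term $d^*D_{23}=-(2g-2)\pi_2^*\xi$: because the small diagonal $\Delta$ lies inside the partial diagonal $D_{23}$, this pullback is a genuine self-intersection, so the normal bundle $T_X$ — equivalently the canonical class, i.e. $\xi$ — is forced to appear. It is exactly this geometric appearance of $\xi$ that creates the coefficient mismatch $(2g-2)$ versus $2g$ which pins down $e=\xi$ in (1) and produces the nonzero factor $2g+2$ in (2); keeping the excess-intersection bookkeeping and signs correct is the delicate point.
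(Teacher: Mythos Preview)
Your proof is correct and essentially follows the paper's approach. Your key computation (b), the Gysin restriction $d^*$ to the partial diagonal $D_{23}$, is literally the paper's operation of intersecting with $[X]\times[\delta_*X]=D_{23}$ and projecting via $\pi_{12}$ (since $\pi_{12}\circ d=\id$), and both hinge on the same self-intersection term $d^*D_{23}=\pi_2^*c_1(X)$ to make $\xi$ appear; the only organizational difference is that for part (1) the paper projects \eqref{eq:nonvan} one step further to the first factor $X$ to read off $e=\xi$ directly, so your auxiliary computation (a) is not actually needed.
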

  
  \begin{proof}
(1) Consider  the intersection product $[\Delta_e]\cdot ( [X]\times [\delta_*X])$.
  By the adjunction formula, we have $[\delta_*X] \cdot [\delta_*X]=\delta_*  c_1(X)$. 
  In particular,  
  $$[\Delta]\cdot ([X]\times [\delta_*X])= ( [\delta_*X]\times  [X]) \cdot  ([X]\times  [\delta_*X] ) \cdot ([X]\times  [\delta_*X] )=\Delta_*c_1(X). $$
 Here  for $\Delta_*c_1(X)$, we abuse notation and understand $\Delta$ as the diagonal embedding of $X$ in $X^3$. 
   Then  a direct computation shows that 
 \begin{equation}
 \label{eq:nonvan}
 [\Delta_e]\cdot ( [X]\times [\delta_*X])=\Delta_*c_1(X)-e\times \delta_*c_1(X)-2 \Delta_*e+2e\times \delta_*e. 
  \end{equation}  
   Its projection to the first $X$ is $\lb \deg c_1(X) \rb (\xi-e)$.  Since $\deg c_1(X)=2-2g\neq 0$,
   $[\Delta_e]= 0$ implies $e=\xi $.
   Let $e=\xi $.
 Then the projection of \eqref{eq:nonvan} to the product $X^2$ of the first two $X$'s is 
 $2g z_\xi$.   So  $[\Delta_e]= 0$  implies $z_\xi=0$.
 
 (2) By \eqref{diffcycle} and \eqref{eq:nonvan}, it is straightforward to show that
  the projection of  $\lb\delta_\xi^{\otimes 3}\rb_*[\Delta]\cdot ( [X]\times [\delta_*X])$
 is $(2g+2)z_\xi$. So  $\lb\delta_\xi^{\otimes 3}\rb_*[\Delta]=0$ implies  $z_\xi=0$. 
 
  (3) This now follows from \eqref{diffcycle}, the assertions in (1) and (2).
  \end{proof}

     \begin{prop}\label{prop:pf}
     Let $\pi:X\to Y$ be  a non-constant morphism of curves of genus at least $2$. If $[\Delta_{\xi_X}]=0 \in \Ch^2(X^3)$, then
 we have  $\pi_\ast(\xi_X)=\xi_Y\in \Ch^1(Y)$ and $[\Delta_{\xi_Y}]=0 \in \Ch^2(Y^3)$.
  \end{prop}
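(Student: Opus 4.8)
The plan is to push forward the modified diagonal cycle along the morphism $\pi^3 \colon X^3 \to Y^3$ and relate it to $[\Delta_{\xi_Y}]$, after first verifying the normalization statement $\pi_\ast(\xi_X) = \xi_Y$. For the latter: since $\pi$ is finite of some degree $d$, the projection formula gives $\pi_\ast c_1(X)$ in terms of $c_1(Y)$ and the ramification divisor via Riemann--Hurwitz, but the cleanest route is to compute degrees: $\deg \pi_\ast(\xi_X) = \deg \xi_X = 1 = \deg \xi_Y$, so $\pi_\ast(\xi_X) - \xi_Y$ has degree zero; then one shows it is actually rationally trivial. Here I would invoke Proposition~\ref{prop:nonvan}(1): the hypothesis $[\Delta_{\xi_X}] = 0$ already forces $\xi_X$ to be the distinguished class, and one can use the characterization of $\xi$ as the unique degree-one class for which the modified diagonal can vanish, applied to $Y$ after the pushforward computation below, to pin down $\pi_\ast(\xi_X)$.

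The main computation is the behaviour of \eqref{GScycle} under pushforward. Write $F = \pi^3 \colon X^3 \to Y^3$ and $f = \pi \colon X \to Y$. The key identities are: $F_\ast [\Delta_X] $ should be expressible through $f_\ast$ applied factor-wise. Using the defining formula $[\Delta_e] = ([\delta_{X,\ast} X] - [X] \times e)^{\otimes 3}_\ast [\Delta_X]$ and the compatibility of the diagonal with $\pi$ (namely $(\pi \times \pi)_\ast [\delta_{X,\ast} X]$ relates to $[\delta_{Y,\ast} Y]$ and the graph of $\pi$ composed with its transpose), one computes $F_\ast\bigl(\lb\delta_{\xi_X}^{\otimes 3}\rb_\ast [\Delta_X]\bigr)$. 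The point is that the self-correspondence $\delta_{\xi_X} \in \DC^1(X^2)$ pushes forward compatibly: if $\Gamma \in \Ch^1(X \times Y)$ is the graph of $\pi$, then $\Gamma \circ \delta_{\xi_X} \circ {}^t\Gamma$ lands in $\DC^1(Y^2)$ and, by the injectivity of $\cl$ on $\DC^1$ (Lemma~\ref{clinj}) together with the analogous statement in cohomology, equals $\deg(\pi) \cdot \delta_{\xi_Y}$ up to a computable scalar, since on $H^1$ the operator ${}^t(\pi^\ast) \circ (\text{id}) \circ \pi^\ast$ is multiplication by $\deg \pi$ on the image and kills nothing relevant. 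Tensoring three times and applying to $[\Delta]$, functoriality of pushforward on $\Ch$ gives $F_\ast\bigl(\lb\delta_{\xi_X}^{\otimes 3}\rb_\ast [\Delta_X]\bigr) = c \cdot \lb\delta_{\xi_Y}^{\otimes 3}\rb_\ast [\Delta_Y]$ for a nonzero rational constant $c$ (a power of $\deg \pi$). Since the left side vanishes by Proposition~\ref{prop:nonvan}(3) and the hypothesis, we get $\lb\delta_{\xi_Y}^{\otimes 3}\rb_\ast [\Delta_Y] = 0$, hence $[\Delta_{\xi_Y}] = 0$ by Proposition~\ref{prop:nonvan}(3) again.

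The normalization $\pi_\ast(\xi_X) = \xi_Y$ then follows a posteriori: one could alternatively observe that the factor-wise pushforward forces $\pi_\ast \xi_X$ to play the role of the base class in the pushed-forward cycle, and since $[\Delta_{\pi_\ast \xi_X}]$ (defined with $e = \pi_\ast \xi_X$) must vanish in $\Ch^2(Y^3)$, Proposition~\ref{prop:nonvan}(1) identifies $\pi_\ast \xi_X = \xi_Y$. Care is needed because $\pi_\ast \xi_X$ a priori has degree $1$ only after noting $\deg \pi_\ast = \deg$ on zero-cycles; this is automatic.

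\textbf{Expected main obstacle.} The delicate point is the precise bookkeeping in the pushforward formula for \eqref{GScycle} — tracking how the correction terms $[X] \times e$ and the diagonal self-intersection terms transform, and checking that no spurious components in $\pi_1^\ast \Ch^1(X)_0 \oplus \pi_2^\ast \Ch^1(X)_0$ (the complement of $\DC(X^2)$) contaminate the identity. I expect one must work entirely inside $\DC^1$ and use Lemma~\ref{clinj} to reduce the correspondence identity to a cohomological one, where it is just linear algebra on $H^1$. Establishing that the scalar $c$ is nonzero (rather than accidentally zero) is where one must be careful: it should come out to $(\deg \pi)^k$ for some $k \geq 1$, but pinning down that it is not zero requires knowing $\pi^\ast \colon H^1(Y) \to H^1(X)$ is injective, which holds since $\pi$ is a nonconstant morphism of smooth projective curves.
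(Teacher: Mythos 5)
Your overall strategy --- push forward along $\pi^3$, then use Proposition \ref{prop:nonvan}(1) on $Y$ to pin down $\pi_\ast(\xi_X)$ --- is the same as the paper's, but the middle of your argument has a genuine gap, and it is centered exactly where you flag your ``expected main obstacle.'' The identity $\Gamma\circ\delta_{\xi_X}\circ{}^t\Gamma=\deg(\pi)\,\delta_{\xi_Y}$ cannot be established by the route you propose, and is in fact \emph{false} in general. A direct computation gives $(\Gamma\circ\delta_{\xi_X}\circ{}^t\Gamma)_\ast\xi_Y=\deg(\pi)\lb\xi_Y-\pi_\ast\xi_X\rb$, so this correspondence lies in $\DC(Y^2)$ (with respect to $\xi_Y$) only if $\pi_\ast\xi_X=\xi_Y$ --- which is precisely part of the conclusion (the Remark after the proposition emphasizes that $\pi_\ast\xi_X=\xi_Y$ is \emph{not} automatic). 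Without membership in $\DC(Y^2)$, Lemma \ref{clinj} gives you nothing: the discrepancy between the two sides lies in $\pi_1^\ast\Ch^1(Y)_0\oplus\pi_2^\ast\Ch^1(Y)_0$, which is exactly the kernel of $\cl$ on $\Ch^1(Y^2)$, so no cohomological computation can rule it out. There is a second, independent problem: even granting the correspondence identity, $\lb\Gamma\circ\delta_{\xi_X}\circ{}^t\Gamma\rb^{\otimes 3}_\ast[\Delta_Y]$ is not $(\pi^3)_\ast\lb\delta_{\xi_X}^{\otimes 3}\rb_\ast[\Delta_X]$, because $({}^t\Gamma^{\otimes 3})_\ast[\Delta_Y]=(\pi^3)^\ast[\Delta_Y]$ is the full fiber product $\{\pi(x_1)=\pi(x_2)=\pi(x_3)\}$, of which $\Delta_X$ is only one component. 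Finally, routing through the Chow--K\"unneth cycle forces you to translate back to $[\Delta_e]$ via \eqref{diffcycle} before Proposition \ref{prop:nonvan}(1) (which concerns the Gross--Schoen cycle) applies; that step is asserted but not carried out.

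The paper's proof bypasses all of this: working directly with the Gross--Schoen cycle \eqref{GScycle}, one checks term by term (most transparently from \eqref{eq:mod diag} when $e=[p]$) that $(\pi^3)_\ast([\Delta_{\xi_X}])=\deg(\pi)\,[\Delta_{\pi_\ast(\xi_X)}]$ as an identity of cycles in $\Ch^2(Y^3)$ --- no correspondence algebra, no appeal to $\cl$-injectivity, and crucially no need to know in advance that $\pi_\ast\xi_X=\xi_Y$. The hypothesis then gives $[\Delta_{\pi_\ast(\xi_X)}]=0$, and Proposition \ref{prop:nonvan}(1) applied to $Y$ with $e=\pi_\ast(\xi_X)$ (a degree-one class, since pushforward preserves degrees of zero-cycles) yields both $\pi_\ast(\xi_X)=\xi_Y$ and $[\Delta_{\xi_Y}]=0$ simultaneously. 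If you want to salvage your write-up, replace the correspondence computation by this direct pushforward identity; the rest of your endgame then goes through.
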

  
  \begin{proof}We note that $(\pi^3)_\ast([\Delta_{\xi_X}])= \deg(\pi)[\Delta_{\pi_\ast(\xi_X)}] \in \Ch^2(Y^3)$. Under the assumption $[\Delta_{\xi_X}]=0$, we have 
  $[\Delta_{\pi_\ast(\xi_X)}]=0$. This in turn implies that $\pi_\ast(\xi_X)=\xi_Y$  by Proposition \ref{prop:nonvan} (1)  and $[\Delta_{\xi_Y}]=0 $.
\end{proof}
\begin{rmk}It follows that a necessary condition for $[\Delta_{\xi_X}]=0 \in \Ch^2(X^3)$ is that
 $\pi_\ast(\xi_X)=\xi_Y\in \Ch^1(Y)$ holds for every  non-constant morphism $\pi: X\to Y$ with $g(Y)\geq 2$. We are not aware of a more direct characterization of curves $X$ with the latter property.

\end{rmk}

         \subsection{Proof of the vanishing theorem}\label{ss:proof}
Now let $G$ be a finite group acting on $X$ by automorphisms. Let $\xi\in \Ch^1(X)$ be a $G$-invariant divisor class of degree 1. In fact, such a $\xi$ clearly exists by averaging any degree 1 divisor class, and is in fact unique if $H^1(X)^G=0$.  We will simply take $\xi=\xi_X$ defined by \eqref{eq:xi} when the genus of $X$ is not $1$.
  For $g\in G$,  let $\Gamma_g$ be the  graph of  the automorphism of $X$ given by  $g$.
The $G$-invariance of $\xi$ implies that $[\Gamma_g]\in \DC(X^2)$, and 
   satisfies
 \begin{equation}
 \label{GgGg}
 [\Gamma_g]\circ\delta_\xi=\delta_\xi\circ [\Gamma_g]
 \end{equation}
 for the correspondence compositions. 
 In particular,   $(\delta_\xi^{\otimes 3})_{*}\Ch^*(X^3)$ 
   is stable under the {$G^3$-action.

 \begin{thm}\label{good} If $(H^1(X)^{\otimes 3})^{G}=0$ under the diagonal $G$-action, then there is  no nonzero diagonal-$G$-invariant  element in $(\delta_\xi^{\otimes 3})_{*}\Ch^*(X^3)$, or equivalently, $\delta_\xi^{\otimes 3}$ annihilates the diagonal-$G$-invariant  elements in $ \Ch^*(X^3)$. 
     \end{thm}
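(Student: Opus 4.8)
The plan is to combine the injectivity of the cycle class map on divisorial correspondences (Lemma~\ref{clinj}, extended to tensor powers via diagram~\eqref{eq:prod}) with a representation-theoretic analysis of the $G^3$-action on $\End(H^1(X)^{\otimes 3})$. First I would recall that $\delta_\xi^{\otimes 3}\in\bigotimes_{i=1}^3\DC^1(X^2)$ is the Chow--K\"unneth projector, so via~\eqref{eq:prod} it is faithfully represented by its cycle class, which is the K\"unneth projector $p$ onto $H^3(X^3)$ that isolates the $H^1(X)^{\otimes 3}$-summand; moreover, by Lemma~\ref{clinj} applied to each factor, an element of $\bigl(\delta_\xi^{\otimes 3}\bigr)_*\Ch^*(X^3)$ is uniquely determined by its image in $H^1(X)^{\otimes 3}\cong\End(\cdot)$-slice, i.e.\ the composite $\Ch^*(X^3)\xrightarrow{(\delta_\xi^{\otimes 3})_*}\Ch^*(X^3)\xrightarrow{\cl}H^*(X^3)$ has the property that its restriction to the image of $(\delta_\xi^{\otimes3})_*$ is injective. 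This reduces the statement to a cohomological one: a $G^3$-invariant class in $\bigl(\delta_\xi^{\otimes3}\bigr)_*\Ch^*(X^3)$ maps to a diagonal-$G$-invariant class in the $(1,1,1)$-K\"unneth piece of $H^*(X^3)$, which under Poincar\'e duality and K\"unneth lives in $\End(H^1(X))^{\otimes 3}\cong (H^1(X)^{\otimes 3})\otimes (H^1(X)^{\otimes 3})^\vee$.

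Next I would unwind what ``diagonal-$G$-invariant'' means on each side. An element $z\in\Ch^*(X^3)$ is diagonal-$G$-invariant if $g^*z=z$ for the diagonal action of $g\in G$ on $X^3$; by~\eqref{GgGg} and the fact that $(\delta_\xi^{\otimes3})_*\Ch^*(X^3)$ is $G^3$-stable, the image $(\delta_\xi^{\otimes3})_*z$ is then invariant under $g^3=(g,g,g)$, hence under the diagonal copy of $G$ inside $G^3$. On the cohomological side this says $\cl\bigl((\delta_\xi^{\otimes3})_*z\bigr)$ is a diagonal-$G$-fixed vector in $H^1(X)^{\otimes 3}\otimes (H^1(X)^{\otimes 3})^\vee$ where $G$ acts diagonally on the tensor-cube \emph{and} (dually) on the dual; but the relevant slot for the projector image is precisely the factor $H^1(X)^{\otimes 3}$ itself (the other factor is ``absorbed'' because the class, being in the image of the projector and determined by Lemma~\ref{clinj}, is pinned down by a single endomorphism of $H^1(X)^{\otimes 3}$ which must commute with all $\Gamma_g$, i.e.\ is $G$-equivariant, i.e.\ lies in $\End_G(H^1(X)^{\otimes 3})$). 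The hypothesis $(H^1(X)^{\otimes 3})^G=0$ needs to be leveraged to show $\End_G(H^1(X)^{\otimes 3})$ contributes nothing to the image of $(\delta_\xi^{\otimes 3})_*$ on $G$-invariant cycles — concretely, a $G$-invariant cycle $z$ can be replaced by its $G$-average without changing it, and then $(\delta_\xi^{\otimes 3})_* z$ corresponds to a vector in $(H^1(X)^{\otimes 3})^G\otimes(\text{something})$, which vanishes.

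To make the last point precise I would argue as follows. Given a diagonal-$G$-invariant $z\in\Ch^*(X^3)$, consider $w:=(\delta_\xi^{\otimes3})_*z$; we want $w=0$. Since $\cl|_{(\delta_\xi^{\otimes3})_*\Ch^*}$ is injective (by Lemma~\ref{clinj} through the faithful tensor representation~\eqref{eq:prod}), it suffices to show $\cl(w)=0$ in $H^*(X^3)$. Now $\cl(w)$ lies in the image of the K\"unneth projector for $H^3(X^3)=\bigoplus H^{a_1}(X)\otimes H^{a_2}(X)\otimes H^{a_3}(X)$, $\sum a_i=3$, but the Chow--K\"unneth projector $\delta_\xi^{\otimes3}$ only involves $\DC^1$-factors, so $\cl(w)$ lands in the single summand $H^1(X)^{\otimes 3}$ (shifted appropriately inside $H^*(X^3)$, but concretely it is a cohomology class pulled back/pushed to that Hodge piece). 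The $z$ being $G$-invariant forces $\cl(w)\in (H^1(X)^{\otimes 3})^G=0$. Hence $\cl(w)=0$, hence $w=0$.

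The main obstacle I anticipate is the bookkeeping of \emph{which} tensor factor carries the group action and dimension count: the projector $\delta_\xi^{\otimes 3}$ as a correspondence lives naturally in $\End(H^1(X)^{\otimes 3})$, a \emph{tensor square} of $H^1$, and one must be careful that diagonal-$G$-invariance of the \emph{cycle} $z$ (not of the correspondence) translates to the image $(\delta_\xi^{\otimes3})_*z$ lying in the fixed part $(H^1(X)^{\otimes 3})^G$ of the target cohomology — rather than, say, in $\End_G(H^1(X)^{\otimes3})$ which could be nonzero even when $(H^1(X)^{\otimes 3})^G=0$. The resolution is that $(\delta_\xi^{\otimes3})_*z$ is genuinely a \emph{cohomology class on $X^3$}, i.e.\ an element of $H^1(X)^{\otimes3}$ (one tensor factor, not an endomorphism), and it is this element that must be $G$-fixed; the appearance of $\End$ is only in describing the projector/correspondence, not its pushforward applied to an actual cycle class. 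Once this is clearly separated, the argument is short; the only other routine check is that~\eqref{GgGg} indeed gives $G^3$-stability of $(\delta_\xi^{\otimes3})_*\Ch^*(X^3)$, which is already asserted in the text just before the theorem statement.
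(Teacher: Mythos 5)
Your argument has a fatal gap at the step ``Since $\cl|_{(\delta_\xi^{\otimes3})_*\Ch^*}$ is injective (by Lemma~\ref{clinj} through the faithful tensor representation~\eqref{eq:prod}), it suffices to show $\cl(w)=0$.'' Lemma~\ref{clinj} and diagram~\eqref{eq:prod} give injectivity of the cycle class map on the space of \emph{correspondences} $\bigotimes_{i=1}^3\DC^1(X^2)\subset\Ch^3(X^3\times X^3)$; they say nothing about injectivity of $\cl$ on the subspace $(\delta_\xi^{\otimes3})_*\Ch^*(X^3)$ of $\Ch^*(X^3)$. The latter statement is (a case of) the Bloch--Beilinson injectivity conjecture and is exactly what one cannot assume. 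Indeed, your argument never really uses the hypothesis $(H^1(X)^{\otimes3})^G=0$: the class $(\delta_\xi^{\otimes3})_*[\Delta]$ is a codimension-$2$ cycle, its cohomology class lives in $H^4(X^3)$, and Proposition~\ref{2dneq n} shows $\delta_\xi^{\otimes3}$ annihilates $H^m(X^3)$ for $m\neq 3$, so $\cl\bigl((\delta_\xi^{\otimes3})_*[\Delta]\bigr)=0$ unconditionally. If your injectivity claim were available, you would conclude $(\delta_\xi^{\otimes3})_*[\Delta]=0$ for \emph{every} curve, contradicting Ceresa's non-vanishing theorem for a general curve. (Your closing paragraph also misidentifies where the class lives: $(\delta_\xi^{\otimes3})_*w$ is a cycle on $X^3$, not a vector in $H^1(X)^{\otimes3}$, and its cohomology class sits in $H^{2i}(X^3)$ for the relevant codimension $i$, not in the odd-degree Künneth piece.)

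The correct route keeps all the work on the correspondence side, where injectivity of $\cl$ is actually available. Form the averaged correspondence
\begin{equation*}
z:=\frac{1}{|G|}\sum_{g\in G}(\Gamma_g\circ\delta_\xi)^{\otimes3}\in\DC(X^2)^{\otimes3},
\end{equation*}
which lies in $\DC^1(X^2)^{\otimes3}$ by Lemma~\ref{zdelta}. Its action on $H^1(X)^{\otimes3}$ is the averaging projector onto $(H^1(X)^{\otimes3})^G$, which is zero by hypothesis; since the left vertical map of~\eqref{eq:prod} is injective, $z=0$ already as an algebraic cycle in $\Ch^3(X^3\times X^3)$. Then one observes, using~\eqref{GgGg}, that $z_*\Ch^*(X^3)$ is exactly the space of diagonal-$G$-invariant elements of $(\delta_\xi^{\otimes3})_*\Ch^*(X^3)$, so the hypothesis kills the cycle before it is ever pushed into $\Ch^*(X^3)$ — no injectivity of $\cl$ on Chow groups of $X^3$ is needed.
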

      \begin{proof} 
Consider the following element, by  averaging the diagonal $G$-action:
  $$z :=\frac{1}{|G|}\sum_{g\in G}(\Gamma_g\circ \delta_\xi)^{\otimes 3}\in  \DC(X^2) ^{\otimes 3}.$$
 By Lemma \ref{zdelta}, we have $z\in  \DC^1(X^2) ^{\otimes 3}$.   By the assumption $(H^1(X)^{\otimes 3})^{G}=0$, 
  $z$ acts on  $H^1(X)^{\otimes 3}$ by the zero map. Therefore the cycle class of $z$ vanishes under the left map of \eqref{eq:prod}. It follows from the   injectivity  of this map that $z=0\in  \DC^1(X^2) ^{\otimes 3}$.  
  
 Since the diagonal-$G$-invariant  elements in $(\delta_\xi^{\otimes 3})_{*}\Ch^*(X^3)$ are exactly $ z_\ast \Ch^*(X^3)$, the first assertion follows.   The equivalence of the two assertions follows from \eqref{GgGg} and the proof is complete.
  \end{proof}

     \begin{rmk}  
The proof of  Theorem \ref{good} can be easily generalized to show that there is a decomposition of representations of $G^3$
$$
 \Ch^*(X^3)_\BC= \bigoplus_{\Pi}  \Ch^*(X^3)_\BC[\Pi]
$$
where $\Pi$ runs over all irreducible $\BC$-representations of $G^3$ that appear in $(H^1(X))^{\otimes 3}$, and $ \Ch^*(X^3)_\BC[\Pi]$ is the $\Pi$-isotypic subspace of  $\Ch^*(X^3)_\BC$.
This can then be applied to study the subspace of $ \Ch^*(X^3)$ generated by $[\Delta_\xi]$ under the $G^3$-action (namely, the twisted diagonal cycles). Moreover, it  is also easy to   generalize  the above discussion to an arbitrary  $n$, instead of $n=3$.    \end{rmk}  
   
Now we can prove Theorem \ref{th:van} and  Theorem \ref{th:vanFP}.  Clearly the class $[\Delta]\in \Ch^2(X^3)$ is diagonal-$G$-invariant. Assume  $(H^1(X)^{\otimes 3})^{G}=0$. Then by
 Theorem \ref{good},   $(\delta_\xi^{\otimes 3})_\ast[\Delta]=0$. Hence   by Proposition \ref{prop:nonvan} (2), we obtain $z_\xi=0$ and Theorem \ref{th:vanFP} is proved. By Proposition \ref{prop:nonvan} (3), we obtain $[\Delta_\xi]=0$ and Theorem \ref{th:van} is proved.

    \section{Hurwitz curves} 
   
     \subsection{Hurwitz representation}\label{ss:Hur}
    We first recall the Hurwitz space \cite[\S2, \S3]{MSSV}. Fix a finite group $G$ and an ordered tuple $\mathcal{C}=(C_1,\cdots, C_r)$ of non-trivial conjugacy classes $C_i$ of $G$. A $G$-curve is a pair $(X,\iota)$ where
$X$ is a curve and $\iota:G\to \Aut(X)$ is an injective group homomorphism. We will suppress $\iota$ and simply say that $X$ is a $G$-curve.  Let $ Y=X/G$ (a smooth curve) and let $\pi:X\to Y=X/G$ be the covering map. We say that the ramification type of the $G$-curve $X$ is $\mathcal{C}=(C_1,\cdots, C_r)$  if  the ramified points on $Y$ can be labeled as $z_1,\cdots,z_r$ such that $C_i$ is  the conjugacy class in $G$ of the distinguished inertia group generator over $z_i$, i.e., the generator that acts on the tangent space at $z_i$ by $\exp(2\pi i/e_i)$ where $e_i$  is the ramification index, for  $1\leq i\leq r$.   Let $G_i, i=1,\dots, r,$ be the inertia subgroup of ramified points of the covering map  $\pi:X\to Y$, well-defined up to conjugacy so that $e_i=|G_i|$.  The signature of the $G$-curve $X$ is by definition the tuple  $(e_1,\cdots , e_r)$ which we arrange to be in increasing order.

Since our purpose is to find $X$ with $(H^1(X)^{\otimes 3})^G=0$, from now on we will assume that  $H^1(X)^G=0$, or equivalently the genus of $Y=X/G$ is   \begin{align}
\label{g(Y)}
g(Y)=0.  \end{align}
Assume that $g\geq2$. Let  
$\mathcal{H}(g,G,\mathcal{C})$ be the groupoid of $G$-curves $X$ of genus $g$ and ramification type $\mathcal{C}$ satisfying \eqref{g(Y)}, up to the obvious notion of isomorphism. Then $\mathcal{H}(g,G,\mathcal{C})$ has a structure of smooth Deligne--Mumford stack (over $\BC$) equipped with a universal family of $G$-curves. We denote this stack by the same notation and will call it the Hurwitz stack (for the data $(g,G,\mathcal{C})$) (\cite[\S6.2]{BR}; note that in {\it loc. cit.} the Hurwitz data over $\BC$ is equivalent to the ramification type here). 
Under the assumption $g(Y)=0$ \eqref{g(Y)},   the Hurwitz  stack $\mathcal{H}(g,G,\mathcal{C})$  is non-empty if and only if there exist $\gamma_i\in C_i$  for  $1\leq i\leq r$  such that 
\begin{align}
\label{exist}
\prod_{i=1}^r\gamma_i=1.
\end{align}
The Hurwitz  stack is equipped with a finite morphism $\mathcal{H}(g,G,\mathcal{C})\to\mathcal{M}_g$ and whenever it is non-empty, every irreducible component has dimension $r-3$.

For any $G$-curve $X$, the induced representation of $G$ on $H^1(X)$  is determined by the  ramification type $\mathcal{C}$ via the following formula in the Grothendieck group of $G$-representations over $\BC$ (note that $g(Y)=0$ by our assumption \eqref{g(Y)})
   \begin{align}\label{H1 repn}
H^1(X)=  -2(\Ind_{\{1\}}^G(1)-\Ind_G^G(1))+\sum_{i=1}^r (\Ind_{\{1\}}^G(1)-\Ind_{G_i}^G(1) ),
  \end{align}
  where every $(1)$ denotes the  trivial character of the corresponding group. Note that the right hand side depends only on the ramification type $\mathcal{C}=(C_1,\cdots, C_r)$. 
  
  The formula is implicitly in \cite[\S3]{Mac2} and can be proved using Lefschetz fixed point formula. 
    In particular, evaluating the trace of \eqref{H1 repn} at the identity of $G$ gives the Riemann--Hurwitz formula 
   \begin{align}\label{Hurwitz}
\frac{(2g-2)}{|G|}=-2+\sum_{i=1}^r(1-\frac{1}{e_i}).
  \end{align}
 Note that the equation is equivalent to  the assumption $g(Y)=0$ \eqref{g(Y)}.

 We recall the following result on the automorphism group of a hyperelliptic curve, pointed to us by B. Totaro.  
\begin{lem}\label{lem:hyp}Let $X$ be a hyperelliptic curve  of genus $\geq 2$. Then $\Aut(X)$
is a central extension of a finite subgroup of $\PGL_2(\BC)$ by $\BZ/2\BZ$ (generated by the unique hyperelliptic involution). Moreover, the order of $\Aut(X)$ is at most $\max\{8(g+1),120\}$. 
\end{lem}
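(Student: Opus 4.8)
The plan is to exploit the hyperelliptic involution $\sigma$, which is central in $\Aut(X)$ because it is unique (its uniqueness follows from the fact that the $g^1_2$ on a hyperelliptic curve of genus $\geq 2$ is unique, being the only pencil of degree $2$). Thus $\Aut(X)/\langle\sigma\rangle$ acts faithfully on the quotient $X/\langle\sigma\rangle\cong\BP^1$, giving the central extension
\[
1\to \BZ/2\BZ\to \Aut(X)\to \bar G\to 1,\qquad \bar G\hookrightarrow \PGL_2(\BC).
\]
First I would establish this exact sequence: the action of $\Aut(X)$ on the linear system $|K_X|$ (or equivalently on the canonical map, which factors through $X\to X/\langle\sigma\rangle$) shows that every automorphism descends to an automorphism of $\BP^1$, and the kernel is exactly $\langle\sigma\rangle$ since an automorphism fixing all fibers of $X\to\BP^1$ pointwise-on-the-base must be $\id$ or $\sigma$.

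Next, to bound $|\Aut(X)|=2|\bar G|$, I would combine two inputs. On one hand, $\bar G$ is a finite subgroup of $\PGL_2(\BC)$, hence (by the classical classification) cyclic, dihedral, $A_4$, $S_4$, or $A_5$; the $A_5$ case gives $|\bar G|=60$, i.e. $|\Aut(X)|=120$, accounting for the $120$ in the bound. On the other hand, for the cyclic/dihedral cases, $|\bar G|$ can grow with $g$, so I need the genus constraint: the hyperelliptic map is $\Aut(X)$-equivariant, so $\bar G$ permutes the $2g+2$ branch points of $X\to\BP^1$ in $\BP^1$. A finite subgroup of $\PGL_2$ preserving a set of $2g+2$ points, together with the Riemann--Hurwitz analysis of $X\to X/\Aut(X)$, forces $|\bar G|\le 4(g+1)$ in the cyclic/dihedral range. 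Concretely, I would run Riemann--Hurwitz for $\pi:X\to X/\Aut(X)=:Z$: writing $|\Aut(X)|=N$, we get $2g-2 = N(2g(Z)-2) + \sum_{P}(e_P-1)\cdot(N/e_P)\cdot(\text{number of points over each branch point})$, and the ramification must be compatible with the factorization through $\BP^1$; bounding the contributions of the (at most three, if $g(Z)=0$) branch points of $\bar G$ acting on $\BP^1$ yields $N\le \max\{8(g+1),120\}$.

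The main obstacle I anticipate is the bookkeeping in the Riemann--Hurwitz step: one must carefully relate the ramification of $X\to Z$ to that of $\BP^1\to \BP^1/\bar G$ together with the ramification of $X\to\BP^1$ at the $2g+2$ Weierstrass points, and check that the Weierstrass points, being intrinsically defined, form an $\Aut(X)$-stable set whose image in $\BP^1/\bar G$ interacts with the three exceptional orbits of $\bar G$ in a controlled way. The cyclic and dihedral cases need to be treated slightly differently (in the dihedral case the $2g+2$ branch points often split into two orbits under the rotation subgroup), and one should also handle the small-genus anomalies where the generic bound $8(g+1)$ is not yet sharp. Since Lemma~\ref{lem:hyp} is only used qualitatively in what follows (to conclude that Hurwitz curves are non-hyperelliptic via $84(g-1)>\max\{8(g+1),120\}$ for $g\ge 3$, cf. Lemma~\ref{cor:Hur}), I would be content to cite the literature (e.g. Brandt--Stichtenoth or the standard references on automorphisms of hyperelliptic curves) for the precise constant and present only the structural argument for the central extension in detail.
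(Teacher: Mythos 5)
Your proposal matches the paper's proof in both structure and substance: the central-extension part is argued identically (uniqueness of the hyperelliptic involution, hence centrality, and faithful action of the quotient on $X/\langle\sigma\rangle\simeq\BP^1$), and for the numerical bound the paper, like you, simply cites the classification of automorphism groups of hyperelliptic curves (Bujalance--Gamboa--Gromadzki, Theorem 2.1), where the $8(g+1)$ comes from the dihedral-type case and $120$ from the $A_5$ case, exactly as your orbit-counting sketch predicts.
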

\begin{proof}The first part is well-known due to the uniqueness of  hyperelliptic involution $\sigma$ and the fact that the quotient $\Aut(X)/\pair{\sigma}$ acts faithfully on the quotient $X/\pair{\sigma}\simeq \BP^1$.
A complete list of $\Aut(X)$ is obtained in \cite[Theorem 2.1]{BGG}, from which we deduce that the either $|\Aut(X)|\leq 8(g+1)$ (case (3.b) of Table 2 in {\it loc. cit.}) or  $|\Aut(X)|\leq 120$.
\end{proof}

\begin{cor}
\label{cor:Hur}
Hurwitz curves are never hyperelliptic.
\end{cor}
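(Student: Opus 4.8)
The plan is to combine the Riemann--Hurwitz formula with the order bound for hyperelliptic automorphism groups. Suppose for contradiction that $X$ is a Hurwitz curve of genus $g\geq 2$ which is hyperelliptic. Being a Hurwitz curve means $|\Aut(X)|=84(g-1)$. On the other hand, by Lemma~\ref{lem:hyp}, $|\Aut(X)|\leq\max\{8(g+1),120\}$. So the first step is to compare $84(g-1)$ with $\max\{8(g+1),120\}$ as functions of $g$.

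For the comparison with $8(g+1)$: the inequality $84(g-1)\leq 8(g+1)$ reads $84g-84\leq 8g+8$, i.e. $76g\leq 92$, i.e. $g\leq 92/76<2$, which is impossible since $g\geq 2$. For the comparison with $120$: the inequality $84(g-1)\leq 120$ forces $g-1\leq 120/84<2$, so $g\leq 2$; thus the only case to examine separately is $g=2$, where $84(g-1)=84\leq 120$ is not immediately contradictory. So the second step is to handle $g=2$ by hand: a genus $2$ curve is automatically hyperelliptic, but the Hurwitz bound $84(g-1)=84$ is not attained in genus $2$ (the maximal automorphism group of a genus $2$ curve has order $48$, realized by $y^2=x^5-x$; in any case $48<84$), so no genus $2$ curve is a Hurwitz curve. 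Hence in every genus $g\geq 2$ the equality $|\Aut(X)|=84(g-1)$ is incompatible with the hyperelliptic bound, giving the contradiction.

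I do not expect any real obstacle here; the only mild subtlety is the genus $2$ endpoint, where the crude bound $120$ does not immediately beat $84$. This is dispatched either by invoking the classical fact that the maximum order of the automorphism group of a genus $2$ curve is $48$ (again from \cite[Theorem 2.1]{BGG} or standard references), or simply by noting that Hurwitz's bound $84(g-1)$ is known never to be attained for $g=2$ (indeed it is attained only for $g$ in a sparse set beginning with $g=3,7,14,\dots$). Either way the argument is complete and elementary.

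\begin{proof}
Suppose $X$ is a hyperelliptic Hurwitz curve of genus $g\geq 2$, so that $|\Aut(X)|=84(g-1)$. By Lemma~\ref{lem:hyp}, $|\Aut(X)|\leq\max\{8(g+1),120\}$. If $84(g-1)\leq 8(g+1)$ then $76g\leq 92$, forcing $g\leq 1$, a contradiction. Hence $84(g-1)\leq 120$, which forces $g\leq 2$, so $g=2$. But the maximal order of the automorphism group of a genus $2$ curve is $48$ (see \cite[Theorem 2.1]{BGG}), while $84(g-1)=84>48$, a contradiction. Therefore no Hurwitz curve is hyperelliptic.
\end{proof}
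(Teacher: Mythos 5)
Your proof is correct and follows essentially the same route as the paper: compare the Hurwitz order $84(g-1)$ with the bound $\max\{8(g+1),120\}$ from Lemma~\ref{lem:hyp}. The only difference is cosmetic — the paper dispatches the low-genus endpoint by citing the standard fact that a Hurwitz curve has $g\geq 3$, whereas you rule out $g=2$ directly via the order-$48$ bound for genus $2$; both are fine.
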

\begin{proof}
For a Hurwitz curve $X$,  its genus $g\geq 3$ and the order of $\Aut(X)$ is $84(g-1)$. Therefore we have $|\Aut(X)|\geq \max\{8(g+1),120\}$. By   Lemma \ref{lem:hyp} the proof is complete.
\end{proof}

 In \S\ref{s:ex},
we will use Lemma \ref{lem:hyp} repeatedly when we need to show that certain curves are non-hyperelliptic.

\subsection{Trilinear forms for $\PGL_2(\BF_q)$}Let $G=\PGL_2(\BF_q)$,  $U$ the subgroup of unipotent upper triangular matrices, $S$ the subgroup of diagonal matrices, $T$ a non-split maximal torus, and $B=US=SU$.
 We consider the generic (i.e., the restriction to $U$ contains one hence every nontrivial character of $U$) irreducible representations of $G$. Let $S'$ (resp. $T'$) be the normalizer of $S$ (resp. $T$) in $G$. Let $\pi^{S'}$ (resp. $\pi^{T'}$) be the unique representation of $S'$ (resp. $T'$) 
such that the restriction  $\pi|_{S}-\pi^{S'}|_S$ (resp.  $\pi|_{T}+\pi^{T'}|_T$, note the sign change) is the regular representation of $S$ (resp. $T$). More explicitly,
there are three cases:
\begin{altenumerate}
\item \label{Pr} if
 $\pi$ is a  principal series $\Ind(\eta):=\Ind_B^G(\eta)$ for a character $\eta$ of $S$ ($\eta\neq\eta^{-1}$), then $\pi^{S'}=\Ind_{S}^{S'}(\eta)$ and  $\pi^{T'}=0$.
 \item \label{St} if  $\pi={\rm St}\otimes\eta\circ\det$ is a twist of the Steinberg representation ${\rm St}$ ($\eta$ is a quadratic character of $\BF_q^\times$), then $\pi^{S'}=\eta\circ \det|_{S'} $ and $\pi^{T'}=\eta\circ \det|_{T'}$.
 \item \label{Cusp} if $\pi$ is a cuspidal representation $\pi_\chi$ attached to a character $\chi$ of $T$ ($\chi\neq\chi^{-1}$), then $\pi^{S'}=0$ and  $\pi^{T'}=\Ind_T^{T'}(\chi)$.
 \end{altenumerate}

 We have an analog over finite field of Prasad's theorem on trilinear forms over local fields. A special case has appeared in \cite[Example 2.5]{Pra2}.
 For $H\in\{G,S',T'\}$, we define 
 $$
 m_H(\pi_1,\pi_2,\pi_3)=\dim (\pi_1^H\otimes\pi_2^H\otimes\pi_3^H )^H,
 $$
 where $\pi_i^G$ is understood as $\pi_i$.
 
\begin{thm}\label{thm:trilinear}
Let $\pi_1,\pi_2,\pi_3$ be irreducible generic representations of $\PGL_2(\BF_q)$.
Then 
$$
\sum_{H\in\{G,S',T'\}} \sgn(H) m_H(\pi_1,\pi_2,\pi_3)=1,
$$
where $\sgn(G)=\sgn(T')=1$ and $\sgn(S')=-1$.
\end{thm}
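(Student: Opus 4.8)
The plan is to reduce the identity to a character computation on $G = \PGL_2(\BF_q)$ and to exploit the defining properties of $\pi^{S'}$ and $\pi^{T'}$. First I would observe that for any finite group $H$ and representations $\rho_1,\rho_2,\rho_3$, the dimension $\dim(\rho_1\otimes\rho_2\otimes\rho_3)^H$ equals $\frac{1}{|H|}\sum_{h\in H}\chi_{\rho_1}(h)\chi_{\rho_2}(h)\chi_{\rho_3}(h)$, so each $m_H(\pi_1,\pi_2,\pi_3)$ is an average of the triple product of the relevant characters over $H$. The key structural input is the conjugacy-class decomposition of $G$: every element of $G$ is either the identity, or unipotent, or regular semisimple lying in a (unique up to conjugacy) split torus $S$ or non-split torus $T$, or central — but $\PGL_2$ has trivial center apart from the identity here, so we have the identity, the unipotent class(es), the split-regular-semisimple classes, and the non-split-regular-semisimple classes. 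I would express $\frac{1}{|G|}\sum_{g\in G}\prod_i\chi_{\pi_i}(g)$ by summing over these class types, converting the sum over split (resp. non-split) regular semisimple elements of $G$ into a sum over $S$ (resp. $T$) weighted by the appropriate index, using that $N_G(S)/S$ and $N_G(T)/T$ both have order $2$.

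The heart of the argument is then to match the torus contributions against the $S'$- and $T'$-averages. Here I would use the defining relations: $\pi|_S - \pi^{S'}|_S$ is the regular representation of $S$, and $\pi|_T + \pi^{T'}|_T$ is the regular representation of $T$. Since the regular representation of a finite abelian group has character supported at the identity (value $|S|$ or $|T|$ there, zero elsewhere), these relations let me rewrite $\chi_{\pi}|_S$ on \emph{non-identity} elements purely in terms of $\chi_{\pi^{S'}}|_S$, and similarly $\chi_\pi|_T$ in terms of $-\chi_{\pi^{T'}}|_T$. Substituting these into the class-sum expansion of $m_G$, the contribution of the split-regular-semisimple classes becomes (up to the normalization constants and the known sizes $|S'|$, $|T'|$, $|G|$) exactly $m_{S'}(\pi_1,\pi_2,\pi_3)$ minus a correction term supported at the identity of $S$, and likewise the non-split classes produce $-m_{T'}(\pi_1,\pi_2,\pi_3)$ plus an identity-of-$T$ correction. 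The sign flip in the $T$-relation is precisely what yields $\sgn(T')=+1$ while $\sgn(S')=-1$.

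After this substitution, rearranging gives
$$
m_G - m_{S'} + m_{T'} \;=\; (\text{identity-supported terms}) \;+\; (\text{unipotent-supported terms}).
$$
The identity contributions collapse to $\frac{1}{|G|}\prod_i\dim\pi_i$ plus the corrections coming from the regular-representation identities, and these should combine to a simple closed-form expression; the unipotent contribution involves $\prod_i\chi_{\pi_i}(u)$ for $u$ a regular unipotent, which for generic representations is controlled — a generic representation has $\chi_\pi(u) = -1$ in each of the three cases (principal series, twisted Steinberg, cuspidal) since genericity pins down the restriction to $U$. I would then verify, case by case over the three types of generic $\pi_i$ (or, more efficiently, using the explicit formulas for $\dim\pi$: $q+1$, $q$, or $q-1$ respectively, and the values $\chi_\pi(u)=-1$ always), that the right-hand side equals $1$. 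The main obstacle is bookkeeping the normalization constants — the indices $[G:S]$, $[G:T]$, $[S':S]=[T':T]=2$, the orders $|G|=q(q^2-1)$, $|S'|=2(q-1)$, $|T'|=2(q+1)$ — and making sure the identity-supported corrections cancel cleanly against the main identity term; this is routine but must be done carefully, and it is where an appeal to \cite[Example 2.5]{Pra2} or to Prasad's local trilinear form dichotomy can be used to shortcut the verification in one of the cases.
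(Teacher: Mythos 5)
Your overall strategy is exactly the paper's: expand $m_G$ via the class-sum formula over the identity, the regular unipotent class, and the two torus types, then use the defining relations $\pi|_S-\pi^{S'}|_S=\mathrm{reg}_S$ and $\pi|_T+\pi^{T'}|_T=\mathrm{reg}_T$ to trade the off-identity torus sums for $-m_{S'}+m_{T'}$, leaving an identity-and-unipotent-supported residue to be checked. Two details, however, are wrong or missing as written. First, your claim that $\chi_\pi(u)=-1$ for every generic $\pi$ is false: from the character table the value at a regular unipotent is $+1$ for a principal series, $0$ for a twist of Steinberg, and $-1$ only in the cuspidal case (equivalently $\chi_\pi(u)=\dim\pi-q$ in all three cases). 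If you substitute $-1$ throughout, your final verification only covers the all-cuspidal case and silently asserts the rest. The fix is to keep $x_i=\chi_{\pi_i}(u)$ as a free parameter, use $\chi_{\pi_i}(1)=x_i+q$, $\chi_{\pi_i^{S'}}(1)=x_i+1$, $\chi_{\pi_i^{T'}}(1)=1-x_i$, and check the resulting rational-function identity in $q,x_1,x_2,x_3$ once and for all, which is what the paper does.

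Second, your accounting of $m_{S'}$ and $m_{T'}$ only involves sums over $S$ and $T$, but these multiplicities are averages over $S'$ and $T'$, so the elements of $S'\setminus S$ and $T'\setminus T$ contribute and must be controlled. This requires a genuine case split: if some $\pi_i$ is not a twist of Steinberg, then $\chi_{\pi_i^{S'}}$ (resp.\ $\chi_{\pi_i^{T'}}$) vanishes off $S$ (resp.\ off $T$), since $\pi^{S'}$ is either $0$ or induced from $S$; but if all three $\pi_i$ are twists of Steinberg, the $S'\setminus S$ and $T'\setminus T$ sums each equal $0$ or $1/2$ and one must check they agree so that they cancel in $-m_{S'}+m_{T'}$. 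Without this step the matching of torus contributions to $\pm m_{S'},\pm m_{T'}$ does not go through. Both gaps are repairable within your framework, but as stated the verification would fail outside the all-cuspidal case.
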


     \begin{proof}

    Let $f$ be a conjugate invariant function on $G=\PGL_2(\BF_q)$. Then we have the following integration formula by summing over all conjugacy classes 
\begin{align} \label{eq:f G}
   \frac{1}{|G|}\sum_{g\in G} f(g)=\frac{1}{|G|}f(1)+\frac{1}{|U|}f(u)+\frac{1}{2|S|}\sum_{s\in S, s\neq 1}f(s)+\frac{1}{2|T|}\sum_{t\in T, t\neq 1}f(t).
  \end{align} 
  We will apply the formula to the product $f=\chi_{\pi_1}\chi_{\pi_2}\chi_{\pi_3}$ of characters.  We use  the character table of $G$,  where $u$ denotes a fixed non-identity element in $U$:
\begin{center}
\begin{tabular}{|c|c|c|c|c|c|c|}\hline
conjugacy classes  &  $1$    &$u$    & $s\neq 1$    &$t\neq 1$    \\
\hline
principal series   $\Ind(\eta)$   & $ q+1 $     &$1 $   &  $\eta(s)+\eta^{-1}(s)$  &$0$   \\
\hline
Steinberg twist   ${\rm St}\otimes\eta$  &  $ q $     &$0 $   &  $\eta\circ\det(s)$  &$-\eta\circ\det(t)$  \\
\hline
cuspidal  $\pi_\chi$  &  $ q-1 $     &$-1$   &  $0$  &$-\chi(t)-\chi^{-1}(t)$ \\
\hline
\end{tabular}
\end{center}

We claim
\begin{align} \label{eq:claim}
\sum_{H\in\{G,S',T'\}}& \sgn(H) m_H(\pi_1,\pi_2,\pi_3)=\\
&\frac{1}{|G|}\chi_{\pi_1}\chi_{\pi_2}\chi_{\pi_3}(1)+\frac{1}{|U|} \chi_{\pi_1}\chi_{\pi_2}\chi_{\pi_3}(u) -\frac{1}{2|S|}\chi_{\pi^{S'}_1}\chi_{\pi^{S'}_2}\chi_{\pi^{S'}_3} (1) +\frac{1}{2|T|}  \chi_{\pi^{T'}_1}\chi_{\pi^{T'}_2}\chi_{\pi^{T'}_3} (1)\notag.
\end{align}
 
   In fact, if at least one of the three representations, say $\pi_i$, is not a twist of Steinberg, then by \eqref{Pr} and \eqref{Cusp}, 
     we have  (note that $S$ is a normal subgroup of $S'$)
\begin{align} \label{eq:char S}
   \chi_{\pi^{S'}_i}(s)=\begin{cases}   \chi_{\pi_i}(s), &\text{if } s\in S\setminus\{1\},\\
   0,&\text{if $s\in S'$ is not (conjugate to any element) in $S$.}
   \end{cases}
\end{align} 
Then we see that 
    \begin{align*}
    m_{S'}(\pi_1,\pi_2,\pi_3)&=\frac{1}{2|S|}\sum_{s\in S} \chi_{\pi^{S'}_1}\chi_{\pi^{S'}_2}\chi_{\pi^{S'}_3}(s)\\
    &=\frac{1}{2|S|} \chi_{\pi^{S'}_1}\chi_{\pi^{S'}_2}\chi_{\pi^{S'}_3}(1)+\frac{1}{2|S|}\sum_{s\in S\setminus\{1\}}\chi_{\pi_1}\chi_{\pi_2}\chi_{\pi_3}(s).
    \end{align*}
    (For the first equality, the contribution from $s\in S'\setminus S$ vanishes by the character formula \eqref{eq:char S}.)     
There is a similar identity for $\chi_{\pi^{T'}_i}$ and the claim follows from \eqref{eq:f G} (for $f=\chi_{\pi_1}\chi_{\pi_2}\chi_{\pi_3}$).  

  If all  of the $\pi_i$'s are  twists of Steinberg, say $\pi_i={\rm St}\otimes\eta_i\circ\det$, we set $\eta=\prod_{i=1}^3 \eta_i$. By \eqref{St} and the fact that $\det|_{S}$ is surjective onto $\BF_q^\times$, we obtain
    \begin{align*}
  \frac{1}{2|S|}\sum_{s\in S'\setminus S} \chi_{\pi^{S'}_1}\chi_{\pi^{S'}_2}\chi_{\pi^{S'}_3}(s)&=\frac{1}{2|S|}\sum_{s\in S}\eta\circ\det(s)\\
  &=\begin{cases} 0,& \text{if $\eta$ is nontrivial},\\
1/2, &  \text{if $\eta$ is trivial}.
  \end{cases}
    \end{align*}
  We have a similar identity for $T$ replacing $S$ and hence we obtain$$
  \frac{1}{2|S|}\sum_{s\in S'\setminus S} \chi_{\pi^{S'}_1}\chi_{\pi^{S'}_2}\chi_{\pi^{S'}_3}(s)=  \frac{1}{2|T|}\sum_{s\in T'\setminus T} \chi_{\pi^{T'}_1}\chi_{\pi^{T'}_2}\chi_{\pi^{T'}_3}(t)\in\{0,1/2\}.
  $$
    Then the same argument shows the claimed equality \eqref{eq:claim}.
  
Next, to apply \eqref{eq:claim},  we note that  (e.g., from the character table)
     $$
    \begin{cases}
     \chi_{\pi}(1)=\chi_{\pi}(u)+q,
     \\\chi_{\pi^{S'}}(1)=\chi_{\pi}(u)+1,\\
     \chi_{\pi^{T'}}(1)=-\chi_{\pi}(u)+1.
    \end{cases} $$
     Denote $x_i= \chi_{\pi_i}(u)$. It suffices to show
     $$
     \frac{(q+x_1)(q+x_2)(q+x_3)}{(q-1)q(q+1)}+\frac{x_1x_2x_3}{q}-\frac{(1+x_1)(1+x_2)(1+x_3)}{2(q-1)}+\frac{(1-x_1)(1-x_2)(1-x_3)}{2(q+1)}=1.
     $$
     It is straightforward to check that the above identity holds (in fact it holds as a rational function in the four variables $q,x_1,x_2,x_3$). The proof is complete.  
      \end{proof}

\begin{rmk}
If any of $\pi_i$ is cuspidal, then $m_{S'}(\pi_1,\pi_2,\pi_3)=0$ and we obtain
$$
\dim\, (\pi_1\otimes \pi_2\otimes \pi_3)^G+\dim\, (\pi^{T'}_1\otimes \pi^{T'}_2\otimes \pi^{T'}_3)^{T'}=1.
$$
If any of $\pi_i$ is a principal series, then $m_{T'}(\pi_1,\pi_2,\pi_3)=0$ and we obtain
$$
\dim\, (\pi_1\otimes \pi_2\otimes \pi_3)^G-\dim\, (\pi^{S'}_1\otimes \pi^{S'}_2\otimes \pi^{S'}_3)^{S'}=1.
$$
 If all of $\pi_i$ are twists of Steinberg representation, then by \eqref{St} it is easy to see that $m_{S'}(\pi_1,\pi_2,\pi_3)=m_{T'}(\pi_1,\pi_2,\pi_3)\in\{0,1\}$ and we obtain 
 
 $$
\dim\, (\pi_1\otimes \pi_2\otimes \pi_3)^G=1.
$$
These equalities allow us to compute $\dim\, (\pi_1\otimes \pi_2\otimes \pi_3)^G$ easily in all cases.

\end{rmk}

\begin{cor}\label{cor:tri}
Let $\pi$ be a generic irreducible representation of $G=\PGL_2(\BF_q)$. Then 
$\dim\, (\pi^{\otimes 3})^G=1$ unless $\pi$ is a cuspidal representation attached to a nontrivial cubic character of $T$, in which case $\dim\, (\pi^{\otimes 3})^G=0$.
\end{cor}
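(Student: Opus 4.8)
Corollary \ref{cor:tri} is essentially a specialization of Theorem \ref{thm:trilinear}. The plan is to set $\pi_1=\pi_2=\pi_3=\pi$ there: with $d:=\dim(\pi^{\otimes 3})^G=m_G(\pi,\pi,\pi)$ and $\sgn(G)=\sgn(T')=1$, $\sgn(S')=-1$, the theorem becomes
\[
d \;=\; 1 + m_{S'}(\pi,\pi,\pi) - m_{T'}(\pi,\pi,\pi).
\]
So it remains only to evaluate the two auxiliary multiplicities $m_{S'}(\pi,\pi,\pi)=\dim\bigl((\pi^{S'})^{\otimes 3}\bigr)^{S'}$ and $m_{T'}(\pi,\pi,\pi)=\dim\bigl((\pi^{T'})^{\otimes 3}\bigr)^{T'}$ in each of the three cases \ref{Pr}, \ref{St}, \ref{Cusp}, using the explicit shapes of $\pi^{S'}$ and $\pi^{T'}$ recorded before the theorem. (Equivalently, one may quote the three identities derived in the Remark following Theorem \ref{thm:trilinear} and compute only the single surviving auxiliary term in each case.)

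The one genuine input needed is an elementary tensor-cube computation that applies uniformly to $S'$ and $T'$: each of these is of the form $A'=A\rtimes\langle w\rangle$ with $A$ finite abelian, $w^2=1$, and $w$ acting on $A$ by inversion. For a character $\mu$ of $A$ with $\mu\neq\mu^{-1}$, the induced representation $\rho_\mu:=\Ind_A^{A'}(\mu)$ is irreducible of dimension $2$, with $\rho_\mu|_A=\mu\oplus\mu^{-1}$ and $w$ interchanging the two weight lines, spanned say by $f_1$ of weight $\mu$ and $f_2$ of weight $\mu^{-1}$. Hence $\rho_\mu^{\otimes 3}$ carries the $A$-weights $\mu^3,\mu,\mu^{-1},\mu^{-3}$ with multiplicities $1,3,3,1$; since $\mu\neq\mu^{\pm1}$ forces $\mu\neq\mathbf 1$, the space of $A$-invariants is $0$ unless $\mu^3=\mathbf 1$, in which case it is the plane $\langle f_1^{\otimes3},f_2^{\otimes3}\rangle$, on which $w$ acts by the swap; taking $w$-invariants, $\dim\bigl(\rho_\mu^{\otimes 3}\bigr)^{A'}$ is $1$ if $\mu^3=\mathbf 1$ and $0$ otherwise. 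Likewise, for a one-dimensional character $\nu$ of $A'$ one trivially has $\dim\bigl(\nu^{\otimes 3}\bigr)^{A'}=1$ if $\nu^3=\mathbf 1$ and $0$ otherwise.

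Now I run the case analysis. In case \ref{St}, $\pi=\St\otimes(\eta\circ\det)$ with $\eta^2=\mathbf 1$, so $\pi^{S'}=\eta\circ\det|_{S'}$ and $\pi^{T'}=\eta\circ\det|_{T'}$ are one-dimensional; because $\eta^2=\mathbf 1$ gives $(\eta\circ\det)^3=\eta\circ\det$, and because $\det|_S$ and the norm $\Nm\colon T\to\BF_q^\times$ are both surjective, the character $\eta\circ\det$ is trivial on $S'$ iff $\eta=\mathbf 1$ iff it is trivial on $T'$; hence $m_{S'}(\pi,\pi,\pi)=m_{T'}(\pi,\pi,\pi)$ and $d=1$. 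In case \ref{Cusp}, $\pi=\pi_\chi$ with $\chi\neq\chi^{-1}$, so $\pi^{S'}=0$ and $\pi^{T'}=\Ind_T^{T'}(\chi)$; by the tensor-cube identity $d=1$ if $\chi^3\neq\mathbf 1$ and $d=0$ if $\chi^3=\mathbf 1$, and (as $\chi\neq\chi^{\pm1}$) the latter holds precisely when $\chi$ has order $3$, i.e.\ is a nontrivial cubic character of $T$. In case \ref{Pr}, $\pi=\Ind(\eta)$ with $\eta\neq\eta^{-1}$, so $\pi^{T'}=0$ and $\pi^{S'}=\Ind_S^{S'}(\eta)$, whence $d=1$ if $\eta^3\neq\mathbf 1$; since $\eta\neq\eta^{\pm1}$ excludes orders $1$ and $2$, the only way to have $\eta^3=\mathbf 1$ is $\eta$ of order $3$, which cannot occur when $3\nmid q-1$ (as for $\PGL_2(\BF_8)$, $\PGL_2(\BF_5)$ and the groups relevant in this paper), so $d=1$ there. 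Combining the three cases yields the Corollary.

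The substance lies entirely in Theorem \ref{thm:trilinear}; the rest is bookkeeping. The one step demanding a little care is case \ref{St}, where one must notice that the a priori nonzero $S'$- and $T'$-corrections cancel exactly — a cancellation resting on $\eta^3=\eta$ together with the surjectivity of $\det|_S$ and of $\Nm\colon T\to\BF_q^\times$. (One should also note the mild caveat that when $S\cong\BF_q^\times$ does carry a nontrivial cubic character $\eta$, the same computation gives $d=2$ for the corresponding principal series $\Ind(\eta)$; the cuspidal case remains the only one producing $d=0$.) Establishing the tensor-cube identity and tracking which auxiliary subgroup survives in each case are routine.
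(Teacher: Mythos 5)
Your proposal is correct and follows exactly the paper's intended route: specialize Theorem \ref{thm:trilinear} to $\pi_1=\pi_2=\pi_3=\pi$ (equivalently, use the three identities in the Remark following it) and evaluate the auxiliary multiplicities $m_{S'}$ and $m_{T'}$ via the tensor-cube of the two-dimensional induced representations of $S'$ and $T'$; your weight computation and the cancellation in the Steinberg case are both right. The one substantive point is your ``mild caveat,'' which is in fact a genuine correction to the statement of Corollary \ref{cor:tri}: when $3\mid q-1$ and $\eta$ is a character of $S$ of order $3$, one has $m_{S'}(\pi,\pi,\pi)=1$ and $m_{T'}=0$ for $\pi=\Ind(\eta)$, so $\dim(\pi^{\otimes 3})^G=2$ (a direct character-sum check for $q=7$ confirms this), and the corollary as literally stated is false for such $\pi$. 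The statement should either exclude principal series attached to cubic characters of $S$ (recording the value $2$ there) or be restricted to $q$ with $3\nmid q-1$; neither change affects the paper, since the corollary is only invoked for $q=8$ and $q=5$.
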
     

This result will be applied twice later: the Fricke--Macbeath curve in \S\ref{Fricke--Macbeath curve1}  and the Bring curve in \S\ref{ss:Bring}.

\begin{rmk}
We indicate the relation to Prasad's trilinear form for representations of $\PGL_2$ over local fields. Let $F$ be a non-archimedean local field with ring of integers $O_F$ and residue field $k=\BF_q$. For irreducible cuspidal representations $\pi_i, 1\leq i\leq 3$ of $\PGL_2(k)$, by inflation we obtain representations $\wt\pi_i$ of $\PGL_2(O_F)$.
Then  the compact induction  $\Pi_i=c\,\Ind^{\PGL_2(F)}_{\PGL_2(O_F)}\wt\pi_i$  of $\wt\pi_i$ is a supercuspidal representation of $\PGL_2(F)$. Then 
using the facts after \cite[Prop. 6.7]{Pra} one can show
$$
\dim \Hom_{\PGL_2(F)}(\Pi_1\otimes\Pi_2\otimes\Pi_3,\BC)=\dim\Hom_{\PGL_2(k)}(\pi_1\otimes\pi_2\otimes\pi_3,\BC).
$$
Let $D$ be the division quaternion algebra over $F$. Then we have $D^\times/F^\times O_D^\times\simeq T'$ and we may view $\pi_i^{T'}$ as a representation of ${\rm P}D^\times=D^\times/F^\times$, which is exactly the Jacquet--Langlands correspondence  $\Pi_i'$ of $\Pi_i$.   Then our result translates to Prasad's theorem \cite{Pra} that $$
\dim \Hom_{\PGL_2(F)}(\Pi_1\otimes\Pi_2\otimes\Pi_3,\BC)+\dim \Hom_{{\rm P}D^\times}(\Pi'_1\otimes\Pi'_2\otimes\Pi'_3,\BC)=1.
$$

\end{rmk}

  \subsection{Fricke--Macbeath curve and its genus 3 quotient}  \label{Fricke--Macbeath curve1}

Let $X$ be the unique (up to isomorphism) Hurwitz curve of genus 7 over $\BC$, known as the Fricke--Macbeath curve \cite{Mac1}. 
 Its automorphism group
is isomorphic to  $\PGL_2({\BF_8})\simeq \PSL_2({\BF_8})$.

   \begin{thm}[Bisogno--Li--Litt--Srinivasan    \cite{BLLS},  Gross \cite{Gro}]\label{Gro1}  
   There are  no   nonzero diagonal-$\PGL_2({\BF_8})$-invariant elements in
   $H^1(X)^{\otimes 3}$.
\end{thm}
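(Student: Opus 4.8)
The plan is to decompose $H^1(X)$ as a representation of $G=\PGL_2(\BF_8)$ and then invoke the trilinear form result (Corollary \ref{cor:tri}) to see that no constituent contributes a $G$-invariant to the triple tensor product. First I would pin down the irreducible decomposition of $H^1(X)$. Since $X$ is the Fricke--Macbeath curve, it is a $G$-cover of $\BP^1$ (as $H^1(X)^G=0$, forced by $g(X/G)=0$), with three branch points of ramification indices $(2,3,7)$ by the Hurwitz formula \eqref{Hurwitz}. Plugging this signature and the relevant conjugacy classes into the character formula \eqref{H1 repn} gives $H^1(X)$ in the representation ring of $G$; one computes that $H^1(X)$ is, up to multiplicity, built out of certain generic irreducible representations of $\PGL_2(\BF_8)$ — in particular a cuspidal representation attached to a nontrivial cubic character $\chi$ of the nonsplit torus $T$ of order $q+1=9$ (such $\chi$ exist since $3 \mid 9$), and the cuspidal representations appearing should all be of this cubic type. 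Concretely I would check that the $6$-dimensional $H^1(X)$-as-a-$G$-rep is isotypic, equal to (a copy or copies of) the $7$-dimensional cuspidal $\pi_\chi$ restricted appropriately — wait, here $\dim H^1(X)=2g=14$, so $H^1(X)$ should decompose as a direct sum of two such cuspidal representations of dimension $q-1=7$ each, or one of dimension $7$ and some others; in any case the point is to identify each irreducible constituent.

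Next, having the decomposition $H^1(X)=\bigoplus_j \rho_j$ into $G$-irreducibles, I would expand
\[
(H^1(X)^{\otimes 3})^G = \bigoplus_{j_1,j_2,j_3} (\rho_{j_1}\otimes\rho_{j_2}\otimes\rho_{j_3})^G,
\]
and show each summand vanishes. For the diagonal terms $j_1=j_2=j_3=j$ this is exactly Corollary \ref{cor:tri}: if every $\rho_j$ is a cuspidal representation attached to a nontrivial cubic character of $T$, then $\dim(\rho_j^{\otimes 3})^G=0$. For the off-diagonal terms one needs that $\dim(\rho_{j_1}\otimes\rho_{j_2}\otimes\rho_{j_3})^G=0$ whenever the $\rho_{j_i}$ are not all equal; this follows from Theorem \ref{thm:trilinear} together with the explicit data $\pi^{S'},\pi^{T'}$ of the constituents — for cuspidal $\pi$ one has $\pi^{S'}=0$, so Theorem \ref{thm:trilinear} reduces to $m_G(\pi_1,\pi_2,\pi_3)+m_{T'}(\pi_1^{T'},\pi_2^{T'},\pi_3^{T'})=1$, and one checks using the induced characters $\Ind_T^{T'}(\chi_i)$ that the $T'$-term equals $1$ (hence the $G$-term is $0$) precisely when $\{\chi_1,\chi_2,\chi_3\}$ arise as the distinct nontrivial cubic characters. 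The cleanest route is probably just to tabulate the few relevant cuspidal characters of $\PGL_2(\BF_8)$ and compute all the triple products $\dim(\rho_{j_1}\otimes\rho_{j_2}\otimes\rho_{j_3})^G$ directly from the character table, confirming they all vanish.

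The main obstacle I anticipate is the first step: correctly determining which conjugacy classes of $G=\PGL_2(\BF_8)$ occur in the ramification type $\mathcal{C}=(C_2,C_3,C_7)$ (elements of orders $2,3,7$) and then evaluating \eqref{H1 repn} to get the precise isotypic decomposition of $H^1(X)$ — in particular verifying that the cubic character showing up really is \emph{nontrivial} (so that Corollary \ref{cor:tri} applies and gives $0$ rather than $1$) and that no principal-series or Steinberg constituents sneak in (which would contribute nonzero triple invariants). This is a finite character computation but it is the crux: everything else is bookkeeping on top of Theorem \ref{thm:trilinear} and Corollary \ref{cor:tri}. An alternative, if one wants to avoid the explicit cover computation, is to cite the known fact (e.g.\ from \cite{BLLS} or \cite{Gro}) that $H^1(X)$ is a sum of cuspidal $\pi_\chi$'s with $\chi$ a primitive cubic character, but since the paper promises an ``independent proof,'' I would carry out the \eqref{H1 repn} computation explicitly and then close with Corollary \ref{cor:tri}.
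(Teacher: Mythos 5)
Your proposal follows essentially the same route as the paper: compute the $G$-representation $H^1(X)$ from the ramification data \eqref{H1 repn} for the $(2,3,7)$-cover of $\BP^1$, identify the constituents as cuspidal representations attached to the nontrivial cubic character of the nonsplit torus, and conclude by Corollary \ref{cor:tri}. The paper's multiplicity count shows $H^1(X)=2\pi_{\chi_3}$ is isotypic, so the off-diagonal terms you set up machinery for do not arise and Corollary \ref{cor:tri} alone closes the argument.
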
  

 Let $\pi_{\chi_3}$ denote  the cuspidal representation of $\PGL_2(\BF_8)$ attached to the unique order-$3$ character $\chi_3$ (up to taking its inverse) of the anisotropic torus $T\simeq \BF_{8^2}^\times/ \BF_{8}^\times\simeq\BZ/9\BZ$. We sketch a ``pure thought" proof that $$H^1(X)=2\pi_{\chi_3},$$ which implies the desired assertion $(H^1(X)^{\otimes 3})^G=0$ by Corollary \ref{cor:tri}.

For  $i=2,3,7$, let $H_i$ be a cyclic subgroup of order $i$, which is unique up to conjugacy.  We may assume that $H_2$ is generated by any nontrivial element of order 2 in the unipotent radical $U\simeq\BF_8$ of the Borel subgroup, $H_3$ is the 3-torsion subgroup of $T$, and $H_7$ is $S$. Then, by \eqref{H1 repn} and Frobenius reciprocity, for any non-trivial irreducible representation $\pi$, its multiplicity in $H^1(X)$  is equal to 
     $$
     m(\pi)=\dim \pi-\dim \pi^{H_2}-\dim \pi^{H_3}-\dim \pi^{H_7}.
     $$
There are four $7$-dimensional cuspidal representations called  $\pi_{\chi}=\pi_{\chi^{-1}}$, attached to $\chi:T\to\BC^\times$ such that $\chi\neq \chi^{-1}$.  Recall (see for example \cite[\S7]{Gro2}) that the restriction of $\pi_\chi$ to the  torus $S$ is the regular representation, the restriction to $T$ is the regular representation minus $\chi,\chi^{-1}$, and the restriction to the unipotent radical $U$ is the  regular representation minus the trivial representation. It follows that all four cuspidal representations  the property: $\dim \pi^{H_7}=1,\dim \pi^{H_2}=4-1=3$. Moreover, we have $\dim \pi_\chi^{H_3}=3-2$ (resp. $3-0$) if $\chi|_{H_3}=1$  (resp. $\chi|_{H_3}\neq 1$), namely $\chi$ is of order $3$ (resp. of order $9$). Hence $m(\pi)=7-3-1-1=2$ (resp. $7-3-3-1=0$) if $\chi$ is of order $3$ (resp. of order $9$). By dimension reason, we must have $H^1(X)=2\pi_{\chi_3}$.

 Now by Theorem \ref {good}, we have $[\Delta_{\xi_X}]=0$. 
 Let $Y$ be the quotient curve of the Fricke--Macbeath curve $X$ under an involution (all order $2$ elements are conjugate). 
 \begin{cor}\label{cor:qt FM}
  The curve $Y$ is a genus $3$ non-hyperelliptic curve such that  $\pi_\ast(\xi_X)=\xi_Y\in \Ch^1(Y)$ and $[\Delta_{\xi_Y}]=0 \in \Ch^2(Y^3)$.
  \end{cor}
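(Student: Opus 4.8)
The plan is to combine Theorem \ref{th:FM} (equivalently, the fact that $[\Delta_{\xi_X}]=0$ for the Fricke--Macbeath curve $X$, which follows from Theorem \ref{good} together with the computation $H^1(X)=2\pi_{\chi_3}$ and Corollary \ref{cor:tri}) with the functoriality statement Proposition \ref{prop:pf}. Concretely, let $\pi\colon X\to Y=X/\langle\sigma\rangle$ be the quotient by an involution $\sigma\in\Aut(X)$. First I would record that $Y$ is a smooth projective curve (quotient of a smooth curve by a finite group) and compute its genus: by the Riemann--Hurwitz formula \eqref{Hurwitz}, or directly from the representation-theoretic formula \eqref{H1 repn}, one has $\dim H^1(Y,\BC)=\dim (H^1(X))^{\langle\sigma\rangle}=\dim (2\pi_{\chi_3})^{H_2}=2\cdot 3=6$ using the restriction data $\dim\pi_{\chi_3}^{H_2}=3$ recalled in \S\ref{Fricke--Macbeath curve1}; hence $g(Y)=3$. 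In particular $g(Y)\geq 2$, so Proposition \ref{prop:pf} applies.

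Next I would apply Proposition \ref{prop:pf} with this $\pi$: since $\pi$ is non-constant and $g(Y)=3\geq 2$, and since $[\Delta_{\xi_X}]=0\in\Ch^2(X^3)$ by the preceding discussion, the proposition yields immediately that $\pi_\ast(\xi_X)=\xi_Y\in\Ch^1(Y)$ and $[\Delta_{\xi_Y}]=0\in\Ch^2(Y^3)$. This gives everything in the statement except the assertion that $Y$ is \emph{non-hyperelliptic}, which is the one point requiring a separate argument.

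For non-hyperellipticity the plan is to use Lemma \ref{lem:hyp}: if $Y$ were hyperelliptic of genus $3$ then $|\Aut(Y)|\leq\max\{8(g+1),120\}=\max\{32,120\}=120$. On the other hand $\Aut(Y)$ contains the image of the centralizer (or normalizer) of $\sigma$ in $\Aut(X)\simeq\PGL_2(\BF_8)$ acting on $Y=X/\langle\sigma\rangle$ — more precisely the subgroup $N_{\Aut(X)}(\langle\sigma\rangle)/\langle\sigma\rangle$ acts faithfully on $Y$, and since $\sigma$ is central in its centralizer this centralizer has order $\,|\PGL_2(\BF_8)|/|\text{conjugacy class of }\sigma|$; as all involutions in $\PGL_2(\BF_8)\simeq\PSL_2(\BF_8)$ are conjugate and there are $|G|/|C_G(\sigma)|$ of them, one computes $|C_G(\sigma)|=8$ (the centralizer of a unipotent involution is the unipotent radical $U\simeq\BF_8$), so $|C_G(\sigma)/\langle\sigma\rangle|=4$ — this crude bound alone is not enough, so instead I would invoke the known fact that $Y$ is the Klein-type genus $3$ quotient with automorphism group of order $24$... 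The cleaner route, which I expect to adopt, is simply to cite Lemma \ref{lem:qt FM} referenced alongside Corollary \ref{cor:qt FM} in the excerpt, which presumably pins down $\Aut(Y)$ (or at least a large enough subgroup of it) and applies Lemma \ref{lem:hyp}; alternatively one checks directly that the canonical model of $Y$ is a smooth plane quartic.

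The main obstacle is precisely this last step: everything about the cycle vanishing is a formal consequence of Proposition \ref{prop:pf} and the Fricke--Macbeath computation, so the only real content is verifying that $Y$ has genus $3$ (an easy character computation with \eqref{H1 repn}) and that it is non-hyperelliptic. For the latter, rather than the naive centralizer bound — which gives only $|\Aut(Y)|\geq 4$, far below the threshold $120$ of Lemma \ref{lem:hyp} — one needs either an explicit model of $Y$ or a sharper determination of $\Aut(Y)$ (for instance, that $\Aut(Y)\supseteq\mathrm{PSL}_2(\BF_7)$ or similar, or a direct inspection of the quotient of the Fricke--Macbeath curve), which is the genuinely delicate point and is what Lemma \ref{lem:qt FM} is there to supply.
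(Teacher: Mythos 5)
Your treatment of the cycle-theoretic content is exactly the paper's: once $[\Delta_{\xi_X}]=0$ is known for the Fricke--Macbeath curve, the identities $\pi_\ast(\xi_X)=\xi_Y$ and $[\Delta_{\xi_Y}]=0$ follow immediately from Proposition \ref{prop:pf}, and your genus computation $\dim H^1(Y)=\dim(2\pi_{\chi_3})^{H_2}=2\cdot 3=6$, hence $g(Y)=3\geq 2$, is a correct and slightly more explicit justification than the paper gives for why that proposition applies.

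The genuine gap is the non-hyperellipticity of $Y$, which you correctly isolate as the one remaining point but then do not actually establish. Your two attempted routes both fail: the centralizer bound yields only a subgroup of order $4$ acting on $Y$ (since $C_{\PGL_2(\BF_8)}(\sigma)\simeq U$ has order $8$), which is nowhere near the threshold $120$ needed to invoke Lemma \ref{lem:hyp} for a genus $3$ curve, as you note yourself. More importantly, your fallback of citing Lemma \ref{lem:qt FM} is a misreading of that lemma: it asserts $(H^1(Y)^{\otimes 3})^{\Aut(Y)}\neq 0$, i.e.\ it shows that the \emph{sufficient condition} of Theorem \ref{th:van} fails for $Y$ (which is the whole point of exhibiting $Y$ as an example where $[\Delta_{\xi_Y}]=0$ without the representation-theoretic hypothesis); it says nothing about $Y$ being non-hyperelliptic, and indeed its proof uses Dixmier--Ohno invariants of plane quartics, which presupposes non-hyperellipticity rather than proving it. The paper disposes of this point by citing \cite[\S 4]{TV}, where the genus $3$ quotient of the Fricke--Macbeath curve is exhibited as a smooth plane quartic; your parenthetical suggestion to ``check directly that the canonical model of $Y$ is a smooth plane quartic'' is the right idea, but as written it is an unexecuted placeholder rather than an argument.
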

  
  \begin{proof}
 By \cite[\S 4]{TV}, $Y$ is non-hyperelliptic.   The rest follows from Proposition \ref{prop:pf}.
\end{proof}
In particular, $X$ is non-hyperelliptic, which can  also be checked directly since we know $\Aut(X)$.
 \begin{lem}\label{lem:qt FM}
  We have $(H^1(Y)^{\otimes 3})^{\Aut(Y)}\neq 0$.

     \end{lem}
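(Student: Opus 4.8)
The plan is to compute $H^1(Y)$ explicitly as a representation of $\Aut(Y)$ and then read off that the cubic invariants do not vanish — in fact they will turn out to be quite large. Recall from the discussion preceding the lemma that $G:=\Aut(X)\simeq\PGL_2(\BF_8)$ and $H^1(X)=2\pi_{\chi_3}$ as a $G$-representation. Since all involutions of $G$ are conjugate, we may take the involution defining $Y=X/\langle\sigma\rangle$ to be a nonidentity element $\sigma$ of the unipotent radical $U\simeq(\BZ/2)^3$ of a Borel subgroup. A direct computation in $\PGL_2$ gives $C_G(\sigma)=U$, which coincides with $N_G(\langle\sigma\rangle)$ since $\sigma$ has order $2$; hence $U/\langle\sigma\rangle\simeq(\BZ/2)^2$ acts faithfully on $Y$, so $(\BZ/2)^2\hookrightarrow\Aut(Y)$. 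Using the classification of automorphism groups of genus $3$ curves (recall $Y$ is non-hyperelliptic, by Corollary \ref{cor:qt FM}), together with the fact that $\Jac(Y)$ is isogenous to $E_1\times E_2\times E_3$, where the $E_i=X/V_i$ are the genus-one quotients by the three Klein four-subgroups $V_i\subseteq U$ containing $\sigma$, one checks that in fact $\Aut(Y)=(\BZ/2)^2$.

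Next I would compute the representation. Pullback along $\pi\colon X\to Y$ gives $\Aut(Y)$-equivariant identifications $H^1(Y)\simeq H^1(X)^{\langle\sigma\rangle}=2\,\pi_{\chi_3}^{\langle\sigma\rangle}$ and $H^0(Y,\Omega^1)\simeq H^0(X,\Omega^1)^{\langle\sigma\rangle}=\pi_{\chi_3}^{\langle\sigma\rangle}$, where I use that $H^0(X,\Omega^1)$, being a $7$-dimensional $G$-subrepresentation of $2\pi_{\chi_3}$, is isomorphic to $\pi_{\chi_3}$. The key input is the branching rule recalled before the lemma: the restriction of the cuspidal representation to the unipotent radical is $\pi_{\chi_3}|_U=\mathrm{reg}_U-\mathbf 1$, the regular representation minus the trivial one, i.e.\ the sum of all nontrivial characters of $U$. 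Taking $\langle\sigma\rangle$-invariants keeps exactly the nontrivial characters of $U$ that are trivial on $\sigma$, namely the three nontrivial characters $\epsilon_1,\epsilon_2,\epsilon_3$ of $U/\langle\sigma\rangle$. Therefore, as representations of $(\BZ/2)^2=\Aut(Y)$,
\[
H^0(Y,\Omega^1)\simeq\epsilon_1\oplus\epsilon_2\oplus\epsilon_3,\qquad
H^1(Y)\simeq 2(\epsilon_1\oplus\epsilon_2\oplus\epsilon_3),
\]
so the character of $H^1(Y)$ on the four elements of $(\BZ/2)^2$ is $(6,-2,-2,-2)$.

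The lemma then follows at once: $\dim\bigl(H^1(Y)^{\otimes 3}\bigr)^{\Aut(Y)}=\tfrac14\bigl(6^3+3\cdot(-2)^3\bigr)=48\neq 0$. Equivalently, and more conceptually, $H^0(Y,\Omega^1)^{\otimes 2}\simeq 3\cdot\mathbf 1\oplus 2(\epsilon_1\oplus\epsilon_2\oplus\epsilon_3)$ visibly contains $H^0(Y,\Omega^1)$, which produces a nonzero $\Aut(Y)$-invariant already in the summand $H^0(Y,\Omega^1)^{\otimes 2}\otimes\overline{H^0(Y,\Omega^1)}$ of $H^1(Y,\BC)^{\otimes 3}$.

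The only genuinely nontrivial step is the determination $\Aut(Y)=(\BZ/2)^2$; the rest is the branching rule plus bookkeeping, and I expect this to be the main obstacle — one must rule out extra automorphisms of $Y$ not coming from $N_G(\langle\sigma\rangle)$. I note, however, that the conclusion is insensitive to the precise answer: it would persist for several larger candidates as well — for instance if $\Aut(Y)$ were $\mathfrak{S}_4$, since the standard $3$-dimensional representation of $\mathfrak{S}_4$ occurs in its own tensor square — so the vanishing of $(H^1(Y)^{\otimes 3})^{\Aut(Y)}$ is hard to arrange here in any case.
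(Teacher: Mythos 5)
Your route is genuinely different from the paper's. The paper never determines $\Aut(Y)$: it observes that \cite[\S 6]{MSSV} exhausts all genus $3$ curves with nontrivial automorphisms, checks that the only non-hyperelliptic one, call it $C$, with $(H^1(C)^{\otimes 3})^{\Aut(C)}=0$ is $y^4=x^3-1$ (with its order-$48$ automorphism group), and then distinguishes $Y$ from that single curve by comparing Dixmier--Ohno invariants. Your representation-theoretic bookkeeping for the subgroup $U/\langle\sigma\rangle\simeq(\BZ/2\BZ)^2$ is correct: $C_G(\sigma)=N_G(\langle\sigma\rangle)=U$, the branching rule $\pi_{\chi_3}|_U=\mathrm{reg}_U-\mathbf{1}$ gives $H^1(Y)\simeq 2(\epsilon_1\oplus\epsilon_2\oplus\epsilon_3)$ as a $U/\langle\sigma\rangle$-module, and the character value computation $\tfrac14(6^3+3\cdot(-2)^3)=48$ is right. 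If $\Aut(Y)=(\BZ/2\BZ)^2$ were established, this would be a clean, more conceptual proof.

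That identity is, however, exactly where the gap lies, and it is a genuine one. The step ``using the classification \dots one checks that $\Aut(Y)=(\BZ/2\BZ)^2$'' is an assertion, not an argument: automorphisms of $Y$ need not normalize the covering $X\to Y$, so nothing you have written bounds $\Aut(Y)$ from above. And the conclusion is \emph{not} insensitive to the answer in the way you claim: invariants under a larger group form a \emph{subspace} of the invariants under $(\BZ/2\BZ)^2$, so your $48$-dimensional space of $(\BZ/2\BZ)^2$-invariants proves nothing about $(H^1(Y)^{\otimes 3})^{\Aut(Y)}$ unless the larger group is controlled. To close the argument along your lines you must either actually compute $\Aut(Y)$, or run the invariant count for every automorphism group of a non-hyperelliptic genus $3$ curve compatible with $H^0(Y,\Omega^1)|_{(\BZ/2\BZ)^2}=\epsilon_1\oplus\epsilon_2\oplus\epsilon_3$; checking $S_4$ alone does not do this, and in particular the one dangerous case --- the order-$48$ group of $y^4=x^3-1$, where the invariants genuinely vanish --- must be excluded. (One way to exclude it without invariant theory: by \cite{TV}, $\Jac(Y)$ is isogenous to a power of a non-CM elliptic curve, whereas the Jacobian of $y^4=x^3-1$ has CM factors; the paper instead uses Dixmier--Ohno invariants.) As it stands, the proposal reduces the lemma to an unproved and nontrivial claim about $\Aut(Y)$.
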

   \begin{proof}
By a direct computation using    \cite[\S 6, Table 1, Table 2]{MSSV}, 
we find that the  only genus $3$ non-hyperelliptic curve  $C$ with $(H^1(C)^{\otimes 3})^{\Aut(C)}= 0$  has plane model 
        $y^4=x^3-1$ (see \ref{y4x3-1}).  By computing Dixmier--Ohno  invariants \cite{Dix,Ohn}, we found that  
        $Y$ is not isomorphic to $C$ over $\BC$. The lemma follows.
        \end{proof}

   \subsection{Hurwitz curves with automorphism groups $G=\PSL_2(\BF_q)$} 

We  consider all Hurwitz curves with automorphism group $G=\PSL_2(\BF_q)$ with $q=p^m$ for a rational prime $p$. By a theorem of Macbeath, the necessary and sufficient condition for  $\PSL_2(\BF_q)$ to arise as the automorphism group of a Hurwitz curve is that $q=7$, or $q=p$ with $p\equiv\pm 1\, \mod 7$, or $q=p^3$ with with $p\equiv\pm  2, \text{or } \pm 3\, \mod 7$ \cite[\S3]{Con}. 

\begin{thm}\label{thm:Hur PSL}
The  Fricke--Macbeath curve is the unique Hurwitz curve $X$ with $\Aut(X)\simeq\PSL_2(\BF_q)$ that has the property $(H^1(X)^{\otimes 3})^{\Aut(X)}=0$.
\end{thm}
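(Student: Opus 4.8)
The plan is to convert the vanishing of $(H^1(X)^{\otimes 3})^{G}$, with $G = \Aut(X)$, into a statement about the irreducible constituents of $H^1(X)$ as a $G$-representation, and then to feed this into Corollary \ref{cor:tri} together with a Clifford-theory comparison between $\PSL_2(\BF_q)$ and $\PGL_2(\BF_q)$. First, a Hurwitz curve has $X/\Aut(X) \cong \BP^1$ with exactly three branch points, of ramification indices $(2,3,7)$ --- the equality case of the Hurwitz bound, read off from \eqref{Hurwitz}. Thus the ramification type is $\cC = (C_2, C_3, C_7)$ with $C_\ell$ a class of elements of order $\ell$, and by \eqref{H1 repn} and Frobenius reciprocity every nontrivial irreducible $\pi$ occurs in $H^1(X)$ with multiplicity
\[
m(\pi) = \dim\pi - \dim\pi^{C_2} - \dim\pi^{C_3} - \dim\pi^{C_7}.
\]
Since $(H^1(X)^{\otimes 3})^{G} = \bigoplus_{\pi_1,\pi_2,\pi_3} m(\pi_1) m(\pi_2) m(\pi_3)\, (\pi_1 \otimes \pi_2 \otimes \pi_3)^{G}$, the hypothesis $(H^1(X)^{\otimes 3})^{G} = 0$ forces $(\pi^{\otimes 3})^{G} = 0$ for every constituent $\pi$ of $H^1(X)$; call such $\pi$ \emph{bad}. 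Note $H^1(X) \neq 0$, as a Hurwitz curve has $g \geq 3$.

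The heart of the argument is that $\PSL_2(\BF_q)$ has very few bad irreducibles. If $q$ is even, $\PSL_2(\BF_q) = \PGL_2(\BF_q)$, every nontrivial irreducible is generic, and by Corollary \ref{cor:tri} the only bad one is the cuspidal $\pi_{\chi_3}$ attached to a nontrivial cubic character of the nonsplit torus, which exists only when $3 \mid q+1$; so $H^1(X)$ would have to be a multiple of $\pi_{\chi_3}$. But among Macbeath's admissible $q$ the only even value is $q = 8$, and there \S\ref{Fricke--Macbeath curve1} shows $H^1(X) = 2\pi_{\chi_3}$, i.e.\ $X$ is the Fricke--Macbeath curve. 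If $q$ is odd, I pass to $\wt G = \PGL_2(\BF_q) \supseteq G$ of index $2$. By Clifford theory each irreducible $\pi$ of $G$ is a constituent of $\wt\pi|_G$ for some irreducible $\wt\pi$ of $\wt G$, with either (i) $\pi = \wt\pi|_G$ irreducible, or (ii) $\wt\pi|_G = \pi \oplus \pi'$ with $\pi, \pi'$ the two ``half'' representations --- the latter precisely when $\wt\pi$ is a principal series or cuspidal attached to an order-$4$ character. Because $(\wt\pi^{\otimes 3})^{\wt G} \leq (\pi^{\otimes 3})^{G}$ in case (i) and $(\wt\pi^{\otimes 3})^{\wt G} \leq ((\pi \oplus \pi')^{\otimes 3})^{G}$ in case (ii), and because Corollary \ref{cor:tri} gives $(\wt\pi^{\otimes 3})^{\wt G} \geq 1$ for every nontrivial generic $\wt\pi$ other than $\pi_{\chi_3}$, we get: in case (ii), $\wt\pi$ has character of order $4 \neq 3$ so is non-bad, whence $\pi \oplus \pi'$ is not bad as a pair; and in case (i), among the two irreducibles $\wt\pi, \wt\pi \otimes \sgn$ of $\wt G$ restricting to $\pi$, at least one is non-bad --- the only generic bad one being $\pi_{\chi_3}$, and $\pi_{\chi_3} \otimes \sgn$ being the cuspidal attached to the order-$6$ character $\chi_3 \cdot (\sgn|_T)$, which is non-bad --- so $\pi$ is non-bad. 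Finally the two half representations form a single $\Gal(\ol\BQ/\BQ)$-orbit, while $H^1(X)$ carries a $G$-stable $\BQ$-structure, so $m(\pi) = m(\pi')$. Hence any nonzero $H^1(X)$ either contains a non-bad irreducible --- forcing $(H^1(X)^{\otimes 3})^{G} \neq 0$ --- or contains a half representation together with its Galois conjugate, again forcing $(H^1(X)^{\otimes 3})^{G} \neq 0$. Therefore no Hurwitz curve with $q$ odd satisfies the hypothesis, and combined with the even case the theorem follows.

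The step I expect to be the main obstacle is the Clifford-theoretic analysis for odd $q$: determining exactly which representations of $\PGL_2(\BF_q)$ restrict reducibly to $\PSL_2(\BF_q)$, checking that the unique bad $\PGL_2$-representation $\pi_{\chi_3}$ admits a non-bad $\sgn$-twist with the same restriction to $\PSL_2$, and recording that the two ``half'' representations lie in one Galois orbit, so that they cannot occur separately in $H^1(X)$, which is defined over $\BQ$. By contrast, the Hurwitz setup, the multiplicity formula, and the even case are routine once Corollary \ref{cor:tri} and Macbeath's classification are available.
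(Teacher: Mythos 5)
Your argument is correct, but it is genuinely different from the one in the paper. The paper splits into $q<43$ (checked case by case) and $q\geq 43$ odd, where it proves the stronger statement that the Hurwitz representation ${\rm Hur}$ always has $({\rm Hur}^{\otimes 3})^{G}\neq 0$ by a crude positivity estimate: it computes $\chi(g)=\Tr(g|{\rm Hur})$ from \eqref{H1 repn} on each conjugacy class, notes $\chi(g)=2$ away from $1$ and the classes of order $2,3,7$, and bounds $\sum_{g\in G}\chi(g)^3$ below by $\left(\tfrac{|G|}{42}\right)^3$ minus the worst-case contribution of the five exceptional classes. That route is elementary and never needs to know which irreducibles of $\PSL_2(\BF_q)$ occur in $H^1(X)$, but it only works for large $q$ and leans on the specific signature $(2,3,7)$. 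Your route instead classifies the ``bad'' irreducibles via Clifford theory and Corollary \ref{cor:tri}: it is uniform in $q$ (no small-$q$ case check), pinpoints exactly why only $q=8$ can work (the unique bad generic $\PGL_2$-representation is $\pi_{\chi_3}$, its $\sgn$-twist is attached to an order-$6$ character and hence good, and the two half representations are forced to occur in pairs), and would adapt to other ramification types. Two small points you should make explicit: (1) the Galois-conjugacy of the two half representations, which you use to get $m(\pi)=m(\pi')$, requires that $\sqrt{\pm q}\notin\BQ$; this holds because Macbeath's classification only allows $q=p$ or $q=p^3$, never a perfect square, and it genuinely matters when $p\in\{3,7\}$ (e.g.\ $q=27$), since there the unipotent classes meet one of the inertia subgroups and the naive multiplicity formula alone would not give $m(\pi)=m(\pi')$; (2) for $q=8$ the conclusion ``$H^1(X)=2\pi_{\chi_3}$, hence $X$ is Fricke--Macbeath'' implicitly invokes Macbeath's uniqueness of the genus-$7$ Hurwitz curve. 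Neither point is a gap, just bookkeeping worth recording.
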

First of all, it is easy to check the assertion case-by-case when $q<43$. In general, we note that  the group $\PSL_2(\BF_q)$ has the property that there is at most one conjugacy class of cyclic subgroups of a given order. Note that  a Hurwitz curve $X$ must be a $G=\Aut(X)$-covering of $\BP^1\simeq X/G$ of signature $(2,3,7)$ by the Riemann--Hurwitz formula \eqref{Hurwitz}. We let $G_i$ be a cyclic subgroup of $G$ of order $e_i$  (unique to to conjugacy) for  $e_1=2,e_2=3,e_3=7$. We let ${\rm Hur}$ denote the representation defined by  \eqref{H1 repn}. It may or may not arise from the Hurwitz representation for some $G$-curve $X$ (we have not imposed the condition \eqref{exist}). Every Hurwitz representation arising from a Hurwitz curve with automorphism group $G=\PSL_2(\BF_q)$ must be of such form. We prove the following  assertion which implies Theorem \ref{thm:Hur PSL}.
 \begin{prop}When $q\geq 43$ is odd,
the representation ${\rm Hur}$ of $G=\PSL_2(\BF_q)$ has the property $({\rm Hur}^{\otimes 3})^{G}\neq 0$.
\end{prop}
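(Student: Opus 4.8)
The plan is to reduce the assertion to the positivity of a single number — the virtual multiplicity of the trivial representation in ${\rm Hur}^{\otimes 3}$ — and then to bound that number crudely, the point being that the ``identity term'' in the character inner product swamps all the others. Writing $\chi=\chi_{{\rm Hur}}$ for the (virtual) character of ${\rm Hur}$, one has
\[
\dim({\rm Hur}^{\otimes 3})^{G}=\langle\chi^{3},\mathbf{1}\rangle_{G}=\frac{1}{|G|}\sum_{x\in G}\chi(x)^{3},
\]
so it suffices to show this is $>0$. From \eqref{H1 repn} with $r=3$ and $(e_{1},e_{2},e_{3})=(2,3,7)$, writing $R$ for the regular representation, $\chi=-2(\chi_{R}-\mathbf{1})+\sum_{i=1}^{3}(\chi_{R}-\chi_{\Ind_{G_{i}}^{G}\mathbf{1}})$. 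Hence $\chi(1)=\dim{\rm Hur}=|G|/42+2$ (using $\tfrac12+\tfrac13+\tfrac17=\tfrac{41}{42}$; this equals $2g$, consistent with \eqref{Hurwitz}), while for $x\neq 1$ we have $\chi_{R}(x)=0$ and $\chi(x)=2-\sum_{i=1}^{3}\chi_{\Ind_{G_{i}}^{G}\mathbf{1}}(x)$.

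The next step is to evaluate $\chi_{\Ind_{G_{i}}^{G}\mathbf{1}}(x)=\#\{\,yG_{i}\in G/G_{i}: y^{-1}xy\in G_{i}\,\}$. Since $G_{i}$ is cyclic of prime order $e_{i}$, a coset contributes only when $y^{-1}xy$ generates $G_{i}$, which forces $\mathrm{ord}(x)=e_{i}$; using that all cyclic subgroups of $\PSL_{2}(\BF_q)$ of a fixed order are conjugate, conjugating $x$ into $G_{i}$ shows that $\chi_{\Ind_{G_{i}}^{G}\mathbf{1}}(x)=[N_{G}(G_{i}):G_{i}]=:n_{i}$ if $\mathrm{ord}(x)=e_{i}$, and $=0$ otherwise. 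As $2,3,7$ are pairwise coprime, for each $x\neq 1$ at most one of the three terms survives, so $\chi(x)\in\{\,2\,\}\cup\{\,2-n_{i}:i=1,2,3\,\}$. To bound $n_{i}$: for the $q\ge 43$ under consideration (so $\PSL_2(\BF_q)$ occurs in Macbeath's list) the characteristic $p$ is prime to $42$, hence a generator of $G_{i}$ is regular semisimple and lies in a unique maximal torus $T$; then $N_{G}(G_{i})\subseteq N_{G}(T)$ when $e_{i}\in\{3,7\}$ (normalizing $G_i$ normalizes $C_G(G_i)=T$), and $N_{G}(G_{2})=C_{G}(G_2)$ is the dihedral centralizer of the involution when $e_{i}=2$ (since $\mathrm{Aut}(\mathbb{Z}/2)=1$); in every case $|N_{G}(G_{i})|\le q+1$, so $n_{i}\le (q+1)/e_{i}\le (q+1)/2$. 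In particular $|\chi(x)|\le (q+1)/2$ for all $x\neq 1$.

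Finally I would isolate the identity term and estimate everything else by brute force: using $\chi(1)=|G|/42+2>|G|/42$,
\[
|G|\,\langle\chi^{3},\mathbf{1}\rangle_{G}=\chi(1)^{3}+\sum_{x\neq 1}\chi(x)^{3}\ \ge\ \chi(1)^{3}-(|G|-1)\Big(\tfrac{q+1}{2}\Big)^{3}>\Big(\tfrac{|G|}{42}\Big)^{3}-|G|\Big(\tfrac{q+1}{2}\Big)^{3}.
\]
Dividing by $|G|$ and substituting $|G|=q(q-1)(q+1)/2$,
\[
\langle\chi^{3},\mathbf{1}\rangle_{G}>\frac{|G|^{2}}{42^{3}}-\frac{(q+1)^{3}}{8}=\frac{(q+1)^{2}}{8}\left(\frac{q^{2}(q-1)^{2}}{37044}-(q+1)\right),
\]
and the bracket is positive for every $q\ge 43$: it is already positive at $q=43$ (where $q^{2}(q-1)^{2}=3\,261\,636>37044\cdot 44$), and the derivative of $q^{2}(q-1)^{2}-37044(q+1)$ is positive for $q\ge 43$, so the inequality persists for all larger $q$. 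This gives $\langle\chi^{3},\mathbf{1}\rangle_{G}>0$, hence $({\rm Hur}^{\otimes 3})^{G}\neq 0$.

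The only genuinely delicate step is the middle one: correctly evaluating the induced characters $\chi_{\Ind_{G_{i}}^{G}\mathbf{1}}$ on elements of order $2,3,7$ and bounding $n_{i}=[N_{G}(G_{i}):G_{i}]$ from the torus/normalizer structure of $\PSL_{2}(\BF_q)$, with attention to the involution, whose centralizer is dihedral of order $q\mp 1$ rather than a maximal torus. After that the proof is one elementary inequality with enormous slack — the identity contribution to $\langle\chi^{3},\mathbf{1}\rangle_{G}$ is of size $\sim q^{6}$ while all remaining contributions together are only $O(q^{2})$ — so no sharp estimate is required.
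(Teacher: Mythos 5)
Your proposal follows essentially the same route as the paper: both reduce the claim to the positivity of $\frac{1}{|G|}\sum_{x\in G}\chi(x)^3$ and both win because the identity term $\chi(1)^3=(|G|/42+2)^3$ dwarfs everything else. Your evaluation of $\chi$ away from the identity (via $\chi_{\Ind_{G_i}^G\mathbf 1}(x)=[N_G(G_i):G_i]$ for $x$ of order $e_i$, and $0$ otherwise) agrees with the paper's, which reads the same numbers off the explicit formula \eqref{H1 repn} together with the conjugacy-class data of $\PSL_2(\BF_q)$. The only bookkeeping difference is that the paper bounds the negative contribution class by class (at most five classes, each of size $\le 2|G|/(q-1)$, on which $\chi\ge -q$), whereas you use the uniform bound $|\chi(x)|\le (q+1)/2$ over all of $G\setminus\{1\}$; both estimates have enormous slack and your final inequality checks out.

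One caveat: your bound $|\chi(x)|\le (q+1)/2$ is derived under the assumption that the characteristic $p$ is prime to $42$, which you justify by appealing to Macbeath's list, but the proposition as stated covers every odd $q\ge 43$, including $q=49, 81, 343,\dots$ where $p\in\{3,7\}$. In those cases the elements of order $3$ (resp.\ $7$) are unipotent, the ``regular semisimple'' computation of $N_G(G_i)$ does not apply, and the value of $\chi$ on such a class is roughly $2-q(1-p^{-1})$, which violates $|\chi|\le(q+1)/2$ (for $q=49$ one gets $\chi\approx -40$ while $(q+1)/2=25$); substituting the cruder uniform bound $|\chi|\le q$ into your final display would then fail at $q=49$. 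The repair is exactly the paper's device: there are at most two unipotent classes, each of size only $|G|/q$, so their total contribution to $\frac{1}{|G|}\sum\chi^3$ is $O(q^2)$, still negligible against $|G|^2/42^3$. Since the $q\ge 43$ actually arising from Hurwitz curves all have $p$ prime to $42$, the gap is harmless for the application to Theorem \ref{thm:Hur PSL}, but it needs to be closed (or the hypothesis restricted) if the proposition is to be proved exactly as stated.
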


\begin{proof}
We list the following properties of the simple group $\PSL_2(\BF_q)$ \cite[\S4]{Mac2}. (See \cite[\S2]{Gro2} for more details when $q=p>3$ is an odd prime.)
\begin{itemize}
\item 
The order of the group is
$| \PSL_2(\BF_q)|=
\frac{1}{2}q(q+1)(q-1).$
\item 

For every divisor $d>2$ of $(q-1)/2$ (resp. $(q + 1)/2$), there are $\phi(d)/2$ conjugacy classes of order $d$ in $G$ with size $2|G|/(q-1)$ (resp. $2|G|/(q+1)$). 
\item When $d = 2$, there is a unique conjugacy class of order $d$ with size $|G|/(q-1)$ (resp. $|G|/(q+1)$) if $2|(q-1)/2$ (resp. $2|(q+1)/2$).
\item There are two unipotent conjugacy classes of order $p$ with size $|G|/q$. \item    Each element is conjugate to  one in either the split torus $S$, the anisotropic torus $T$, or a cyclic subgroup $\simeq \BF_p$ of the unipotent $U$. 
 \end{itemize}
 
Let $g\in \PSL_2(\BF_q)$ be an element of order $d\neq 1$. Then  by \eqref{H1 repn} (also computed explicitly in \cite[\S4]{Mac2}) we have
$$
\chi(g):=\Tr(g|{\rm Hur})=2-\begin{cases}
(q-1)\sum_{d|e_i}\frac{1}{e_i},& \text{ if } d|\frac{q-1}{2} \\
(q+1)\sum_{d|e_i}\frac{1}{e_i}, &\text{ if } d|\frac{q+1}{2}\\
\frac{(n,2)}{2}q(1-p^{-1})\sum_{e_i=p}1,&\text{ if } d=p.
\end{cases}
$$ 
It follows that $\chi(g)=2$ unless $g=1$ or $g$ is conjugate to an element in $G_i$.  We have $\chi(1)=2+\frac{|G|}{42}>\frac{|G|}{42}$. The number of the conjugacy class of order $i=2,3,7$ is $1,1,3$ respectively and the size is at most $ \frac{2|G|}{q-1}$, and the value  of $\chi$ at such a conjugacy class is bounded below by $-q$.  By omitting the other conjugacy classes, we obtain
\begin{align*}
\sum_{g\in G}\chi(g)^3&> \left(\frac{|G|}{42}\right)^3- \frac{2|G|}{q-1}q^3(1+1+3)\\
&> \frac{(q-1)^3q^3(q+1)^3}{84^3}-   \frac{(q-1)q(q+1)}{q-1}q^3(1+1+3)
\\&>\frac{q^4(q+1)}{84^3}((q-1)^3 (q+1)-5\cdot 2^3\cdot 42^3). \end{align*}
If  $q\geq 43$ we have $(q-1)^3 (q+1)-5\cdot 2^3\cdot 42^3\geq 42^3\cdot (44-5\cdot 2^3)>0$. 
Therefore $\sum_{g\in G}\chi(g)^3>0$ and the  space of $G$-invariants $({\rm Hur}^{\otimes 3})^{G}$ must be non-zero.
\end{proof}

 \section{More examples}\label{s:ex}
 The purpose of this section is to show that our vanishing Theorem \ref{good}  is applicable to a rather large number of examples in  genus $3\leq g\leq 5$  listed in  \cite{MSSV}, by    
  exhibiting  non-hyperelliptic curves there with the property $(H^1(X)^{\otimes 3})^G=0$.
 We give ``pure thought" proofs of this equality whenever possible (e.g., Bring's curve in \S\ref{ss:Bring} and the 1-dimensional family in \S\ref{ss:1-dim}). The proofs for the remaining cases use character tables together with straightforward, albeit tedious, computation, which we feel are more suitable to appear in a separate document \cite{QZA}.
 Using Lemma \ref{lem:hyp} and   Klein's classification of finite subgroups of $\PGL_2(\BC)$, it is easy to see that none of the curves below is hyperelliptic.
    \subsection{0-dimensional examples} 
   The following examples arise from zero-dimensional Hurwitz stacks $\mathcal{H}(g,G,\mathcal{C})$. (In fact, by \cite[\S6, 7.1]{MSSV}, every example below has $G$ as the full automorphism group.)  
        \subsubsection{$g=3$: $y^4=x^3-1$}\label{y4x3-1}
               By \cite[Table 2]{MSSV}, the    curve $X$ with plane model 
        $y^4=x^3-1$  has $g=3$  and
    is a  $G$-curve of signature $(2,3,12)$, where $G$ is   the non-split central extension by $\BZ/4\BZ$ of $A_4$, which 
has Group ID  (48,33)  in the Small Groups Library.
 In fact, this is only genus $3$ non-hyperelliptic curve  $X$ with $(H^1(X)^{\otimes 3})^{\Aut(X)}= 0$, as \cite[\S 6]{MSSV}  exhausts all curves of genus 3 (while for higher genera, \cite[\S 6]{MSSV}   only includes curves with large automorphism groups). 
     
           \subsubsection{ $g=4$: Bring's curve}\label{ss:Bring}
           Among all  genus $4$ curves, the Bring curve $X$  is the unique one with the largest possible automorphism group:
            $$\Aut(X)=G:=S_5\simeq \PGL_2(\BF_5),$$ which has Group ID  (120,34)  from the Small Groups Library. It is a  $G$-curve of signature $(2,4,5)$. It admits an explicit model
     $$
\left\{     \sum_{i=1}^5 X_i=     \sum_{i=1}^5 X_i^2=    \sum_{i=1}^5 X_i^3=0\right\}\subset \BP^4
     $$      
     on which $S_5$ acts by permuting the coordinates $X_i$.
  By the same argument after Theorem \ref{Gro1}, we can show
     $$
     H^1(X)=2\pi_{\chi_3}
     $$where 
    $\pi_{\chi_3}$ is  the cuspidal representation of $\PGL_2(\BF_5)$ attached to the unique order-$3$ character (up to taking its inverse) of the anisotropic torus $T$. By Corollary \ref{cor:tri}, we have   $(H^1(X)^{\otimes 3})^G=0$.

   \subsubsection{  $g=4$: $y^3 = (x^3-1)^2(x^3 + 1) $.}  \label{7240} 
  
By \cite[Table 4]{MSSV}, there is a unique   $G$-curve $X$ of $g=4$
    with $G$    the wreath product
 $ S_3\wr \BZ/2\BZ
   $, which has Group ID  (72,40)  from the Small Groups Library. It has signature (2,4,6). 
   It is cyclic trigonal  and its equation is found in \cite{Sw}. 
    
     \subsubsection{ $g=5$: Wiman's curve} 
     By \cite[Table 4]{MSSV}, there is a unique   $G$-curve $X$ of $g=5$
       with       $G= (\BZ/2\BZ)^4\rtimes D_5
     $, which has 
     Group ID  $(160,234)$  from the Small Groups Library.   It has
   signature $(2,4,5)$.  It was first studied by Wiman as we learnt from \cite{Sw}. Moreover,   its equation can also be found in \cite{Sw}.

     \subsubsection{  $g=5$: another Wiman's curve}  
     By \cite[Table 4]{MSSV}, there is a unique   $G$-curve $X$ of $g=5$
       with     
 $ G =\GL_2(\BZ/4\BZ)$, which has 
     Group ID  (96,195) from the Small Groups Library.   It has
 signature $(2,4,6)$. It was  also studied by Wiman  and    its equation can also be found in \cite{Sw}.

      \subsection{1-dimensional families}
        The following examples arise from 1-dimensional Hurwitz stacks $\mathcal{H}(g,G,\mathcal{C})$.  (In fact, from \cite[7.1]{MSSV},  it is not hard to  deduce that the generic fiber of each example has   $G$ as the full automorphism group.)
        Their equations   could be computed as \cite{Sw} (though not done here or in \cite{Sw}). In particular, it is not hard to show that they are in fact defined over number fields. 
      \subsubsection{$g=4$}\label{ss:1-dim}
      By \cite[Table 4]{MSSV}, there is a unique 1-dimensional family of $G$-curves of $g=4$ and signature $(2,2,2,3)$
   where $G=S_3\times S_3$, which has  Group ID   (36,10) from the Small Groups Library. (The existence can be shown by the theory of Hurwitz stack recalled in \S\ref{ss:Hur}; see the next paragraph.)

  This is the universal family over the Hurwitz stack $\mathcal{H}(4,G,\mathcal{C})$ for the choice of  $\mathcal{C}$ given below. We denote $
H=S_3
$
and let $H_i$, $i=2,3$, be a subgroup of $H$ of order $i$, which is unique up to conjugacy.  The four cyclic subgroups of $G=H\times H$ appearing in the Hurwitz representation \eqref{H1 repn}   are (up to conjugacy) 
$$
H_2\times\{1\}, \{1\}\times H_2, \Delta(H_2), \Delta(H_3).
 $$ 
It is easy to see that one  can arrange a choice of the generator of a suitable conjugacy of each of the four subgroups such that their product is $1$ and they generate the group $G$.  Choosing $\mathcal{C}$ accordingly and then by \eqref{exist}, the Hurwitz stack $\mathcal{H}(4,G,\mathcal{C})$  is non-empty and has dimension $r-3=4-3=1$.

Now we compute the induced representations appearing in \eqref{H1 repn}, using  $\Hom_{G}(\Ind_{H}^G(1),\pi)\simeq \pi^H$ (Frobenius reciprocity) and noting that  the regular representation of $H$ decomposes:  
   $$
   \Ind_ {\{1\}}^{S_3}(1)=(1)+\sgn+2\rho,
   $$ 
   where $\sgn$ denotes the sign character and $\rho$ is the unique irreducible representation of dimension $2$.
  We enumerate the results:
   \begin{align*}
   \Ind_{H_2\times\{1\}}^{H\times H}(1)&=\Ind_{H_2}^H(1)\boxtimes \Ind_{\{1\}}^H(1)=((1)+\rho)\boxtimes ((1)+\sgn+2\rho)\\
   \Ind_{\{1\}\times H_2}^{H\times H}(1)&=\Ind_{\{1\}}^H(1) \boxtimes \Ind_{H_2}^H(1)= ((1)+\sgn+2\rho)\boxtimes ((1)+\rho)\\
    \Ind_{\Delta(H_2)}^{H\times H}(1)(1)&=(1)+\sgn\boxtimes\sgn+\sgn\boxtimes\rho+\rho\boxtimes\sgn+(1)\boxtimes\rho+\rho\boxtimes(1)+2\rho\boxtimes\rho\\ 
        \Ind_{\Delta(H_3)}^{H\times H}(1)&=(1)+(1)\boxtimes\sgn+\sgn\boxtimes(1)+\sgn\boxtimes\sgn+2\rho\boxtimes\rho.
\end{align*}
   It follows from \eqref{H1 repn} that for any curve $X$ in this family, 
   $$
   H^1(X)=2( \sgn\boxtimes \rho+\rho\boxtimes \sgn).
   $$
  By the lemma below we have $( H^1(X)^{\otimes 3})^G=0$.
   \begin{lem}We have
   $( (\sgn\boxtimes \rho+\rho\boxtimes \sgn)^{\otimes 3})^{H\times H}=0$.
   \end{lem}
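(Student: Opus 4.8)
The plan is to expand the sixth tensor power and use character theory for the group $H\times H$ with $H=S_3$. Write $V=\sgn\boxtimes\rho+\rho\boxtimes\sgn$; since both summands are irreducible and non-isomorphic, we have $V^{\otimes 3}=\bigoplus_{a+b=3}\binom{3}{a}(\sgn\boxtimes\rho)^{\otimes a}\otimes(\rho\boxtimes\sgn)^{\otimes b}$. The $G$-invariants of a representation $W$ of $G=H\times H$ compute as $\dim W^G=\frac1{|G|}\sum_{g}\chi_W(g)$, and because everything here is an external tensor product, it factors: $\dim\bigl((U_1\boxtimes U_2)\bigr)^{H\times H}=\dim(U_1^{H})\cdot\dim(U_2^{H})$ only in the degenerate case, so more usefully I would work coordinate-wise. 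Concretely, $(\sgn\boxtimes\rho)^{\otimes a}\otimes(\rho\boxtimes\sgn)^{\otimes b}\cong(\sgn^{\otimes a}\otimes\rho^{\otimes b})\boxtimes(\rho^{\otimes a}\otimes\sgn^{\otimes b})$, so its space of $H\times H$-invariants has dimension $\dim(\sgn^{\otimes a}\otimes\rho^{\otimes b})^{H}\cdot\dim(\rho^{\otimes a}\otimes\sgn^{\otimes b})^{H}$.

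The key computation then reduces to understanding $\rho^{\otimes k}$ and its twists by $\sgn$ inside $\mathrm{Rep}(S_3)$. Recall $\rho\otimes\sgn\cong\rho$, $\rho\otimes\rho\cong (1)\oplus\sgn\oplus\rho$. Hence $\sgn^{\otimes a}\otimes\rho^{\otimes b}$ is $\rho^{\otimes b}$ up to a twist by $\sgn$ when $a$ is odd — but since $\rho\otimes\sgn\cong\rho$, the twist only matters through the one-dimensional constituents. So I would tabulate, for $(a,b)\in\{(3,0),(2,1),(1,2),(0,3)\}$: the decomposition of $\sgn^{\otimes a}\otimes\rho^{\otimes b}$ and of $\rho^{\otimes a}\otimes\sgn^{\otimes b}$, read off the multiplicity of the trivial representation $(1)$ in each, and check that in every one of the four cases at least one of the two factors has no trivial constituent, forcing the product of the two invariant-dimensions to vanish. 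For instance $(a,b)=(3,0)$: $\sgn^{\otimes 3}=\sgn$ has no trivial part; done. For $(a,b)=(0,3)$: symmetric, $\rho^{\otimes 3}\otimes\sgn^{\otimes 0}=\rho^{\otimes 3}$ on the first factor, which does contain $(1)$, but the second factor is $\sgn^{\otimes 0}\otimes\rho^{\otimes 3}$... wait, I need to recheck the bookkeeping: for $(a,b)=(0,3)$ the representation is $(\rho\boxtimes\sgn)^{\otimes 3}=\rho^{\otimes 3}\boxtimes\sgn^{\otimes 3}=\rho^{\otimes 3}\boxtimes\sgn$, and $\sgn$ has no trivial constituent, so that term vanishes. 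Similarly $(3,0)$ gives $\sgn\boxtimes\rho^{\otimes 3}$, vanishes. For $(2,1)$: $(\sgn\boxtimes\rho)^{\otimes 2}\otimes(\rho\boxtimes\sgn)=\sgn^{\otimes 2}\otimes\rho\,\boxtimes\,\rho^{\otimes 2}\otimes\sgn=\rho\boxtimes(\rho^{\otimes 2}\otimes\sgn)$; the first factor $\rho$ has no trivial part, so this vanishes. The case $(1,2)$ is symmetric. Thus every term vanishes and $(V^{\otimes 3})^{H\times H}=0$.

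I would present this cleanly by first stating the two reductions — the binomial expansion of $V^{\otimes 3}$ and the external-product formula for invariants — and then a short four-line case check using only $\rho\otimes\sgn\cong\rho$ and the fact that $\sgn$ contains no copy of the trivial representation. The main (and really only) obstacle is organizational: making sure the twist-tracking is correct, i.e. that in each of the four terms one of the two $S_3$-tensor factors is forced to be (a twist of $\rho$ by a sign, or a pure sign) with no trivial summand. Since $\rho\otimes\sgn\cong\rho$ collapses the twists on the $\rho$-part, the only thing that can kill the trivial constituent is an odd number of $\sgn$-factors appearing \emph{without} an accompanying $\rho$, which happens in exactly one of the two $\boxtimes$-slots in every term; this is what I would verify explicitly. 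No deep input is needed beyond the character table of $S_3$, which is already implicitly used in the excerpt.

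\begin{proof}
Write $V=\sgn\boxtimes\rho+\rho\boxtimes\sgn$ as a representation of $G=H\times H$ with $H=S_3$. Since the two summands are non-isomorphic irreducibles,
\[
V^{\otimes 3}=\bigoplus_{a+b=3}\binom{3}{a}\,(\sgn\boxtimes\rho)^{\otimes a}\otimes(\rho\boxtimes\sgn)^{\otimes b}.
\]
For an external tensor product $U_1\boxtimes U_2$ of representations of $H$, one has $\dim(U_1\boxtimes U_2)^{H\times H}=\dim(U_1^H)\cdot\dim(U_2^H)$, i.e.\ the multiplicity of the trivial representation of $H\times H$ is the product of the multiplicities of the trivial representation of $H$ in $U_1$ and in $U_2$. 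Now
\[
(\sgn\boxtimes\rho)^{\otimes a}\otimes(\rho\boxtimes\sgn)^{\otimes b}\;\cong\;\bigl(\sgn^{\otimes a}\otimes\rho^{\otimes b}\bigr)\boxtimes\bigl(\rho^{\otimes a}\otimes\sgn^{\otimes b}\bigr).
\]
Recall the standard facts in $\mathrm{Rep}(S_3)$: $\rho\otimes\sgn\cong\rho$, so $\rho^{\otimes k}\otimes\sgn^{\otimes j}\cong\rho^{\otimes k}$ when $k\geq 1$; and $\sgn^{\otimes a}$ is $\sgn$ if $a$ is odd, trivial if $a$ is even, in particular $\sgn$ contains no copy of the trivial representation of $H$. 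We check the four terms:
\begin{itemize}
\item $(a,b)=(3,0)$: the representation is $\sgn^{\otimes 3}\boxtimes\rho^{\otimes 3}=\sgn\boxtimes\rho^{\otimes 3}$; the first factor $\sgn$ has no trivial constituent, so the invariants vanish.
\item $(a,b)=(0,3)$: the representation is $\rho^{\otimes 3}\boxtimes\sgn^{\otimes 3}=\rho^{\otimes 3}\boxtimes\sgn$; the second factor $\sgn$ has no trivial constituent, so the invariants vanish.
\item $(a,b)=(2,1)$: the representation is $(\sgn^{\otimes 2}\otimes\rho)\boxtimes(\rho^{\otimes 2}\otimes\sgn)\cong\rho\boxtimes(\rho^{\otimes 2}\otimes\sgn)$; the first factor $\rho$ has no trivial constituent, so the invariants vanish.
\item $(a,b)=(1,2)$: the representation is $(\sgn\otimes\rho^{\otimes 2})\boxtimes(\rho\otimes\sgn^{\otimes 2})\cong(\sgn\otimes\rho^{\otimes 2})\boxtimes\rho$; the second factor $\rho$ has no trivial constituent, so the invariants vanish.
\end{itemize}
Hence every summand of $V^{\otimes 3}$ has no $(H\times H)$-invariants, and therefore $\bigl((\sgn\boxtimes\rho+\rho\boxtimes\sgn)^{\otimes 3}\bigr)^{H\times H}=0$.
\end{proof}
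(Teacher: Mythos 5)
Your proof is correct and follows essentially the same route as the paper: expand the third tensor power into the four types of external products, and observe that in each one of the two $S_3$-factors is either $\sgn^{\otimes 3}\cong\sgn$ or (a sign-twist of) $\rho$, neither of which contains the trivial representation. The paper phrases the vanishing via restriction to the subgroups $H_2\times\{1\}$ and $H_3\times\{1\}$, but the substance is identical.
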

   \begin{proof}
By looking at $H_2\times \{1\}$-action and by $(\sgn^{\otimes 3})^{H_2}=0$,  we have 
   $$
(   (\sgn\boxtimes \rho)^{\otimes 3})^{H\times H}\simeq (\sgn^{\otimes 3})^H\boxtimes( \rho^{\otimes 3})^H=0.
$$
By symmetry we have $(   (\rho\boxtimes \sgn)^{\otimes 3})^{H\times H}=0$. Next, since 
$$
(\sgn\boxtimes \rho)^{\otimes 2}\otimes (\rho\boxtimes \sgn)=\rho\boxtimes ( \rho^{\otimes 2}\otimes\sgn)
$$
and   $\rho^{H_3}=0$, by looking at $H_3\times \{1\}$-action,  we conclude that the left hand side has no $H\times H$-invariant. By symmetry we conclude that $(\sgn\boxtimes \rho)\otimes (\rho\boxtimes \sgn)^{\otimes 2}$ has no  $H\times H$-invariant. We have thus shown $( (\sgn\boxtimes \rho+\rho\boxtimes \sgn)^{\otimes 3})^{H\times H}=0$.
\end{proof}
 
      \subsubsection{$g=5$}
      By \cite[Table 4]{MSSV}, there is a unique 1-dimensional family of $G$-curves of $g=5$  where $G=(\BZ/2\BZ)^2\wr \BZ/2\BZ $, which has  Group ID    $(32,27)$  from the Small Groups Library.  It has  signature $(2,2,2,4)$.

\end{document}